\tikzset{math3d/.style={x= {(1cm,0cm)}, z={(0cm,1cm)},y={(0.353cm,0.353cm)}}}
\newcommand{\nn}{\mathbb{N}}
\newcommand{\zz}{\mathbb{Z}}
\newcommand{\cc}{\mathbb{C}} 
\newcommand{\pp}{\mathbb{P}} 
\newcommand{\gd}{\mathcal{D}} 
\newcommand{\gl}{\mathcal{L}} 
\newcommand{\gt}{\mathcal{T}} %
\newcommand{\lra}{\longrightarrow}
\newcommand{\ra}{\rightarrow}
\newcommand{\lcor}{\left\langle\! \left\langle}
\newcommand{\rcor}{\right\rangle\! \right\rangle}
\newcommand{\twprod}[1]{\bullet_{#1}^{\tw}}
\newcommand{\redprod}[1]{\bullet_{#1}^{\red}}
\newcommand{\Zprod}[1]{\bullet_{#1}^{Z}}
\def\cf{\textit{cf.}\kern.3em}
\def\ie{\textit{i.e.},\ }
\def\resp{\textit{resp.}\kern.3em}
\renewcommand{\k}{\kern2pt}
\numberwithin{equation}{section} \makeatletter
\renewcommand\thesection{\arabic{section}} \makeatother
\newcommand*{\longhookrightarrow}{\ensuremath{\lhook\joinrel\relbar\joinrel\rightarrow}}
\newtheorem{thm}[equation]{Theorem}
\newtheorem{cor}[equation]{Corollary}
\newtheorem{lem}[equation]{Lemma}
\newtheorem{prop}[equation]{Proposition}
\theoremstyle{definition} 
\newtheorem{defn}[equation]{Definition}
\newtheorem{rem}[equation]{Remark}
\newtheorem{exmp}[equation]{Example}%
\newtheorem{notn}[equation]{Notation}%
\newtheorem{assumption}[equation]{Assumption}%
\DeclareMathOperator{\Bl}{Bl}%
\DeclareMathOperator{\pt}{pt}%
\DeclareMathOperator{\Mir}{Mir}%
\DeclareMathOperator{\Gr}{gr}%
\DeclareMathOperator{\amb}{amb}%
\DeclareMathOperator{\QDM}{QDM}%
\DeclareMathOperator{\red}{red}%
\DeclareMathOperator{\rank}{rk}%
\DeclareMathOperator{\Hom}{Hom}%
\DeclareMathOperator{\im}{Im}
\DeclareMathOperator{\Id}{id}%
\DeclareMathOperator{\Pic}{Pic}%
\DeclareMathOperator{\Spec}{Spec}
\DeclareMathOperator{\vir}{vir}%
\DeclareMathOperator{\topp}{top}%
\DeclareMathOperator{\tw}{{tw}}%
\renewcommand{\k}{\kern2pt}
\newcommand{\la}{\lambda}
\newcommand{\La}{\Lambda}
\newcommand{\al}{\alpha}
\newcommand{\de}{\delta}
\newcommand{\De}{\Delta}
\newcommand{\eps}{\epsilon}
\newcommand{\A}{\mathbb{A}}
\newcommand{\C}{\mathbb{C}}
\newcommand{\D}{\mathbb{D}}
\newcommand{\G}{\mathbb{G}}
\newcommand{\I}{\mathbb{I}}
\newcommand{\N}{\mathbb{N}}
\newcommand{\Q}{\mathbb{Q}}
\newcommand{\R}{\mathbb{R}}
\newcommand{\Z}{\mathbb{Z}}
\renewcommand{\gg}{\mathcal{G}}
\newcommand{\go}{\mathcal{O}}
\newcommand{\gp}{\mathcal{P}}
\newcommand{\gz}{\mathcal{Z}}
\newcommand{\isom}{\stackrel{\sim}{\longrightarrow}}
\newcommand{\ov}[1]{\overline{#1}}
\DeclareMathOperator{\Quot}{Quot}%
\newcommand{\<} {\left\langle}
\renewcommand{\>} {\right\rangle}
\newcommand{\specmori}{\mathbf S}  
\newcommand{\spectoric}{\mathbf T}
\newcommand{\convmori}{\ov{\mathbf D}}
\newcommand{\convtoric}{\mathbf D}
\newcommand{\freemori}{\mathbf V}
\newcommand{\freetoric}{\mathbf U}
\newcommand{\mirrortoric}{\mathbf W}
\newcommand{\ctopvb}{c_\top(\vb)} 
\newcommand{\ctop}{c_\top} 
\newcommand{\hatctop}{\widehat{c}_{\mathrm top}} 
\newcommand{\bat}{B} 
\newcommand {\sq}{\Box} 
\newcommand{\raysbase} {\De\!(1)^{\!\scriptscriptstyle\mathit{base}}}
\newcommand{\raysfiber}  {\De\!(1)^{\!\scriptscriptstyle\mathit{bund}}}
\newcommand{\eulerclass}{\mathfrak{E}} 
\newcommand{\eulerfield}{\widehat{\mathfrak{E}}} 
\newcommand{\lb}{\mathcal L} 
\newcommand{\vb}{\mathcal E} 
\newcommand{\tordiv}{L}      
\newcommand{\mori}[1]{\text{NE\hspace{-0.1mm}}(#1)} 
\DeclareMathOperator{\Lm}{Lm} 
\newcommand{\mc}{m_{c_{{top}}}}
\renewcommand{\top}{\mathrm{top}} 
\DeclareMathOperator{\ev}{e}
\newcommand{\GKZid}{\mathbb{G}}   
\newcommand{\GKZidsheaf}{\mathcal{G}}   
\newcommand{\GKZmod}{\mathbb{M}} 
\newcommand{\GKZmodsheaf}{\mathcal{M}} 
\DeclareMathOperator{\QSR}{QSR} 
\DeclareMathOperator{\Lin}{Lin} 
\DeclareMathOperator{\SR}{SR} 
\newcommand{\GKZcom}{G} 
\newcommand{\mapyx}{\phi} 
\DeclareMathOperator{\proj}{Proj} %
\newcommand{\qsrpol}{R} 
\newcommand{\0}{\mathbf{0}} 
\newcommand{\res}{\mathrm{res}}
\newcommand{\dimevenH}{s} 
\newcommand{\dual}{\scriptscriptstyle\vee}
\newcommand{\tang}{\gt}
\begin{document}

\title{Quantum $\gd$-modules for toric nef complete intersections}

\author{Etienne Mann}

\address{Université d'Angers, Laboratoire Angevin de Recherche en Mathématiques LAREMA
  , UMR 6093, Département de mathématiques, Bâtiment I, Faculté des Sciences, 2 Boulevard Lavoisier, F-49045 Angers cedex 01
France}
\email{etienne.mann@univ-angers.fr}
\urladdr{http://www.math.univ-angers.fr/~mann/}

\author{Thierry Mignon}
\address{ Université de Montpellier, Institut Montpelliérain Alexander Grothendieck, UMR 5149, Case courier 051
Place Eugène Bataillon
F-34 095 Montpellier CEDEX 5 }
\email{thierry.mignon@math.univ-montp2.fr}
\urladdr{http://www.math.univ-montp2.fr/~mignon/}

  \begin{abstract} 
    Let $X$ be a smooth projective toric variety with $k$ ample line bundles.  Let $Z$ be the zero
    locus of $k$ generic sections. It is well-known that the ambient quantum $\mathcal{D}$-module of
    $Z$ is cyclic \ie is defined by an ideal of differential operators.  In this paper, we give an
    explicit construction of this ideal as a quotient ideal of a GKZ system associated to the toric
    data of $X$ and the line bundles. This description can be seen as a ``left cancellation
    procedure''.  We consider some examples where this description enables us to compute generators of
    this ideal, and thus to give a presentation of the ambient quantum $\gd$-module.

  \end{abstract}

\subjclass{14N35, 53D45, 14F10} 
\keywords{Quantum differential
    modules, Gromov-Witten invariants, Batyrev rings, GKZ systems, mirror symmetry, $\gd$-modules, toric geometry}


\maketitle

\setcounter{tocdepth}{1}
\tableofcontents

\section{Introduction}
\label{sec:introduction}

Mirror symmetry has many different formulations in mathematics:
equivalence of derived categories (known as Homological Mirror
Symmetry by Kontsevich \cite{Kontseich-1995-HMS}), isomorphism of
Frobenius manifolds (see \cite{Bms}), comparison of Hodge numbers for
Calabi-Yau varieties (see for example
\cite{Batyrev-Dual-polyhedra-1994}), isomorphism of Givental's conesb
(see \cite{Givental-1998-Mirror-complete-intersection}), isomorphism
of pure polarized TERP structures (see \cite{2006-Hertling-tt*}) or
variation of non-commutative Hodge structures (see
\cite{Katzarkov-Pantev-Kontsevich-ncVHS}).

Inspired by the works of Givental (see for examples \cite{Givental-Equivariant-GW} and
\cite{Givental-1998-Mirror-complete-intersection}), many authors have considered quantum cohomology with a
differential module approach : see Kim \cite{Kim-1999-QH-for-G/P} and Rietsch (with Marsh and
Pech-Williams) \cite{Rietsch-Mirror-Toda-2012}
\cite{rietsch-Marsh-Grassmiannians-mirro-symmetry-2013arXiv1307.1085M}
\cite{Rietsch-Pech-Williams-LG-quadrics-2014} for homogeneous spaces, see Coates-Corti-Lee-Tseng
\cite{CCLTsqcwps} and Guest-Sakai \cite{2008-Guest-Sakai-orbifold-QDM} for weighted projective
spaces, see also the works of Iritani \cite{2006-Iritani-QDM-Floer}, \cite{iritani_convergence},
\cite{2008-Iritani-QDM-general-mirror-transform} and \cite{Iritani-2009-Integral-structure-QH}, the
book of Cox-Katz \cite{Cox-Katz-Mirror-Symmetry} and the one of Guest \cite{Guest-book-QDM-2010}.

From the small quantum product on a smooth projective variety $Z$, one can define a trivial vector
bundle over $\convtoric\times \cc$ where $\convtoric$ is an open subset of $H^{2}(Z,\cc)$ whose
fibers are $H^{2*}(Z,\cc)$.  This holomorphic bundle is endowed with a flat meromorphic connection
and a non-degenerate pairing. These data collectively define the quantum $\gd$-module of $Z$, which
is denoted by $\QDM(Z)$.  When $Z$ is a smooth \textbf{toric} Fano variety, Givental (see also
Iritani \cite{Iritani-2009-Integral-structure-QH} for toric weak Fano orbifolds) gives an explicit
presentation of this $\gd$-module using GKZ systems (Gelfand-Kapranov-Zelevinsky) in other words
$\QDM(Z)$ is isomorphic to $\mathcal{D}/\mathcal{G}_{Z}$ where $\mathcal{G}_{Z}$ is the GKZ ideal
associated to the toric data of $Z$.  When $Z$ is Fano, restricting this isomorphism to $\convtoric
\times \{0\}$ gives an isomorphism between the quantum cohomology ring of $Z$ and a commutative algebra
constructed by Batyrev in \cite{batyrev-quantum-1993}.



In this paper, we investigate the non toric case where $Z$ is a nef complete intersection subvariety
in a smooth toric variety $X$. To be more precise, let $\lb_{1}, \ldots ,\lb_{k}$ be ample line
bundles on $X$. Let $Z$ be the zero locus of a generic section of $\vb:=\oplus_{i=1}^{k}\gl_{i}$.
Denote by $\iota:Z\hookrightarrow X$ the closed embedding. By Lefschetz theorem, we have
$H^{*}(Z,\mathbb{C})= \im \iota^{*} \oplus \ker \iota_{*}$. The sub-vector space $\im \iota^{*}$ is
called the ambient part of the cohomology of $Z$, denoted by $H^{*}_{\amb}(Z)$. As $H^{*}_{\amb}(Z)$
is stable by the small quantum product of $Z$, we can define a sub $\mathcal{D}$-module, denoted by
$\QDM_{\amb}(Z)$, whose fibers are $H^{*}_{\amb}(Z)$.  A natural question is to find an explicit
presentation of $\QDM_{\amb}(Z)$.  It is well known that the GKZ ideal associated to $\mathcal{E}$,
denoted by $\mathcal{G}_{(X,\mathcal{E})}$, is part of the equations. Cox and Katz addressed in the
book \cite[p.94-95 and p.101]{Cox-Katz-Mirror-Symmetry} the following question: what differential
equations shall we add to $\mathcal{G}_{(X,\mathcal{E})}$ to get an isomorphism with $
\QDM_{\amb}(Z)$?


Before giving an answer to this question in Theorem \ref{thm:1,intro}, we need to introduce some
notations.  Denote by $c_{\top}(\mathcal{E})$ the top Chern class of $\mathcal{E}$ and by
$\widehat{c}_{\top}\in \mathcal{D}$ its associated operator (see Notation \ref{notn:quantization}).
Denote by $(\mathcal{G}_{(X,\mathcal{E})}:\widehat{c}_{\top})$ the left quotient ideal that is the
left ideal of $\mathcal{D}$ defined by
\begin{displaymath}
  (\mathcal{G}_{(X,\mathcal{E})}:\widehat{c}_{\top}):=\langle P\in \mathcal{D} \mid
  \widehat{c}_{\top}P \in \mathcal{G}_{(X,\mathcal{E})} \rangle.
\end{displaymath}

\begin{thm}[See Theorem \ref{thm:main_theorem}]\label{thm:1,intro}
  Let $\lb_{1}, \ldots ,\lb_{k}$ be ample line bundles on $X$, and
  assume that $\dim_\C X\geq k+3$. Let $Z$ be the zero of a generic
  section of $\vb:=\oplus_{i=1}^{k}\gl_{i}$. Denote by 
  $\iota:Z\hookrightarrow X$ the closed embedding. The ambient $\mathcal{D}$-module
  $\QDM_{\amb}(Z)$  is isomorphic to $\mathcal{D}/(\mathcal{G}_{(X,\mathcal{E})}:\widehat{c}_{\top})$. 
\end{thm}
The quotient ideal $(\mathcal{G}_{(X,\mathcal{E})}:\widehat{c}_{\top})$ can be seen as a precise
statement for the ``left
cancellation procedure'' that appears in the works of Golyshev \cite[\S 2.9 and 2.10]{Golyshev-classification-pb-2007} and Guest-Sakai \cite[p.287]{2008-Guest-Sakai-orbifold-QDM}.

Reichelt-Sevenheck used this presentation of $\QDM_{\amb}(Z)$ to prove a mirror theorem for non
affine Landau-Ginzburg model (see \cite[Theorem 6.11]{Reichelt-SevenheckNonaffinLG-2012}).

To prove our main theorem, we proceed in several steps.
 \begin{enumerate}
 \item In the first section, we review some standard facts on twisted quantum $\mathcal{D}$-module
   $\QDM(X,\mathcal{E})$ which is of rank $\dim_{\mathbb{C}}H^{*}(X)$ and is defined via the
   Gromov-Witten invariants twisted by $\mathcal{E}$. We have a surjective morphism $\varphi:
   \QDM(X,\mathcal{E}) \to \QDM_{\amb}(Z)$ and we construct an explicit quotient of
   $\QDM(X,\mathcal{E})$ which gives an isomorphism with $\QDM_{\amb}(Z)$ (see Proposition
   \ref{prop:isomorphisme_QDM_reduit_et_ambiant}).
 \item Then we prove that we have an isomorphism of $\mathcal{D}$-modules
   $\varphi:\mathcal{D}/\mathcal{G}_{(X,\mathcal{E})} \to \QDM(X,\mathcal{E})$. To show this
   statement, we first define a surjective morphism.  Then we prove that
   $\mathcal{D}/\mathcal{G}_{(X,\mathcal{E})}$ is locally free of rank $\dim_{\cc}H^{*}(X)$. The
   freeness is proved in Section \ref{sec:GKZ_modules}. To compute the rank, we restrict
   $\mathcal{D}/\mathcal{G}_{(X,\mathcal{E})}$ to $\convtoric\times \{0\}$ and we get a commutative algebra.  This
   algebra is a twisted version of the standard Batyrev algebras in \cite{batyrev-quantum-1993}.  In
   Section \ref{sec:Batyrev_algebras}, we prove that the spectrum of this algebra is locally free of
   rank $H^{*}(X)$ over some explicit open subset of $\convtoric$ (see Theorem \ref{thm:Batyrev_locally_free}).
 \item Using the isomorphism $\varphi$ constructed above, we define a morphism $\overline{\varphi} :
   \mathcal{D}/(\mathcal{G}_{(X,\mathcal{E})}:\widehat{c}_{\top}) \to \QDM_{\amb}(Z)$
   which is surjective. To prove that $\overline{\varphi}$ is an isomorphism, we prove that
   $\mathcal{D}/(\mathcal{G}_{(X,\mathcal{E})}:\widehat{c}_{\top})$ is locally free
   of rank $\dim_{\cc}H^{*}_{\amb}(Z)$. The freeness is proved in Section \ref{sec:GKZ_modules}. To
   compute the rank, we restrict
   $\mathcal{D}/(\mathcal{G}_{(X,\mathcal{E})}:\widehat{c}_{\top})$ to $\convtoric\times
   \{0\}$ and we get a commutative algebra. In Section
   \ref{sec:Batyrev_algebras}, we prove that the spectrum of this algebra is locally free of rank
   $H^{*}_{ \amb}(Z)$ over some explicit open subset of $\convtoric$ (see Theorem \ref{thm:Batyrev_locally_free}).
\end{enumerate}

The plan of this article is the following.

Section \ref{sec:quantum-D-modules} contains a brief discussion of the twisted quantum $\gd$-module
$\QDM(X,\vb)$.

In Section \ref{sec:Batyrev_algebras}, we define and study  twisted Batyrev algebras  for a
quasi-projective toric variety. The main result of this section is Theorem \ref{thm:Batyrev_locally_free}. Notice that this section can be read independently of
the rest of the paper.

In Section \ref{sec:GKZ_modules}, we prove that the GKZ modules of $\mathcal{E}^{\vee}$ and its
residual are locally free sheaves. Using Section \ref{sec:Batyrev_algebras}, we compute their
ranks. The main result of this section is Theorem \ref{thm:GKZ_locally,free}.
 
In Section \ref{sec:isomorphism-theorems}, we
state and prove Theorem \ref{thm:1,intro} in Subsection
\ref{subsec:main,thm}.

In Section \ref{sec:exampl-:-hypers}, we give two explicit
computations of the generators of the quotient ideal
for $\pp^{n}$ with the line bundle
$\mathcal{O}(a)$ with $a\in\{1, \ldots ,n+1\}$ and the blow-up of
$\pp^{n}$ at a point with the line bundle $\mathcal{O}(aH+bE)$ with
$b\in \{-1, \ldots ,1-n\}$ and $a+b\in \{1,2\}$.

For the sake of completeness, lacking references in the literature, we review the axioms for twisted
Gromov--Witten theory in Appendix \ref{sec:twisted-axiom-for-GW}.


\textbf{Acknowledgements:} We thank Thomas Reichelt, Claude Sabbah and
Christian Sevenheck for useful discussions. The seminar in Paris
organised by Serguei Barannikov and Claude Sabbah on the
non-commutative Hodge structures was the starting point of this paper.
We also thank Antoine Douai for helping in the organisation of the
workshop in Luminy on the work of Iritani.  We are also grateful to
Hiroshi Iritani that pointed out the reference
\cite{mavlyutov_chiral_2000} (see Remark \ref{rem:mavlyutov}) and to
Claire Voisin of the reference \cite{Cataldo-Migliorini-2002-LEF}.  The first author is
 supported by the ANR \textit{New symmetries in Gromov-Witten theories}
 number ANR- 09-JCJC-0104-01 and both authors are member of the ANR \textit{Mirror symmetry and irregular
   singularities coming from physics}  ANR-13-IS01-0001-01.

\begin{notn}
 We use calligraphic
  letters for the sheaves such as
  $\gd,\gg,\GKZmodsheaf, \GKZmodsheaf^{\res}$.
  We use bold letters for modules or ideals on non commutative rings such as
  $\mathbb{D},\GKZid, \GKZmod, \GKZmod^{\res}$.
\end{notn}

\section{Twisted and reduced quantum $\gd$-modules with geometric interpretation}
\label{sec:quantum-D-modules}

Let $X$ be a smooth projective complex variety of dimension $n$ and  
 $\lb_{1}, \ldots,\lb_{k}$ be globally generated line bundles. 
Denote by $\vb$ the sum $\vb:=\lb_{1}\oplus \cdots \oplus \lb_{k}$.

\begin{notn}\label{notn:base_of_cohomology} 
For $0\leq i\leq 2n$,
denote by $H^i(X):=H^i(X,\C)$ the complex cohomology group of classes
of degree $i$. Also denote by $H^{*}(X)$ the complex cohomology
ring $\oplus_{i=0}^{2n} H^i(X)$~; the even part of this ring will be
written $H^{2*}(X)$. Put $\dimevenH=\dim_\C H^{2*}(X)$ and $r=\dim_\C
H^2(X)$.



We fix, once and for all, a homogeneous basis $(T_0, \ldots ,T_{\dimevenH-1})$
of $H^{2*}(X)$ such that $ T_{0}=\mathbf{1}$ is the unit for the cup
product and that the classes $T_1, \ldots ,T_r$ form a basis of $
H^2(X,\Z)$ modulo torsion. 
 Also denote by
$(T^0,\ldots,T^{\dimevenH-1})$ the Poincar\'e dual in $H^{2*}(X)$ of
$(T_0,\ldots,T_{\dimevenH-1})$.

As a convention, we will write $H_2(X,\Z)$ for the degree $2$ integer homology 
modulo torsion.
Denote by $(B_1, \ldots ,B_r)$ the dual basis of $(T_1,\ldots,T_r)$ in
$H_2(X,\Z)$. The associated coordinates will be denoted by $(d_1,
\ldots ,d_r)$.

We denote by $\tang_X$ the tangent bundle of $X$ and by $\omega_X$ its canonical
sheaf.
\end{notn}
As a convention, we will make no notational distinction between vector
bundles and locally free sheaves, writing --for example-- $\vb$ 
for both.

\subsection{Twisted quantum $\gd$-module}
\label{subsec:twisted-quantum-D-module}

\subsubsection{Twisted Gromov-Witten invariants}
\label{subsubsec:twisted_product}

Let $\ell$ be in $\N$ and $d$ be in $H_2(X,\Z)$.  Denote by $X_{0,\ell,d}$
the moduli space of stable maps of degree $d$ from rational curves
with $\ell$ marked points to $X$.  The universal curve over $X_{0,\ell,d}$
is $X_{0,\ell+1,d}$
\begin{displaymath}
  \xymatrix{X_{0,\ell+1,d}\ar[d]^-{\pi}\ar[rr]^-{\ev_{\ell+1}}&&X \\ X_{0,\ell,d}}
\end{displaymath}
where $\pi$ is the map that forgets the $(\ell+1)$-th point and stabilises,
and $\ev_{\ell+1}$ is the evaluation at the $(\ell+1)$-th marked point.
By Lemma 10 in \cite{Fulton-Pandharipande-1997-Notes-stable-maps}) the sheaf
 $\vb_{0,\ell,d}:=R^0\pi_*\ev_{\ell+1}^*\vb$ is locally free of rank
 $\int_{d}c_{1}(\vb)+k$.

 For $j$ in $\{1, \ldots ,\ell\}$, we define the surjective morphism $\vb_{0,\ell,d}\to
 \ev_{j}^*\vb$ by evaluating the section at the $j$-th marked point. We define $\vb_{0,\ell,d}(j)$
 to be the kernel of this map~; that is, we have the following exact sequence
  \begin{align}\label{eq:suite,exact,E,0,l,d}
    \xymatrix{
0\ar[r]&\vb_{0,\ell,d}(j)\ar[r]&\vb_{0,\ell,d}\ar[r]&\ev_{j}^*\vb\ar[r]&0}
  \end{align}
  For any $j\in\{1, \ldots ,\ell\}$ the bundle $\vb_{0,\ell,d}(j)$ has
  rank $\int_{d}c_{1}(\vb)$.
  For $i\in\{1,\ldots,\ell\}$, denote by $\psi_{i}$ the first Chern class of the line bundle
  on $X_{0,\ell,d}$ whose fiber at a point $(C,x_{1}, \ldots ,x_{\ell},$ $
  f:~C\to X)$ is the cotangent space ${\gt^*{C}}_{x_{i}}$. 
  
  \begin{defn}\label{defi:twisted-GW-invariants}
    Let $\ell$ be in $\N$, $\gamma_{1}, \ldots ,\gamma_{\ell}$ be
    classes in $ H^{2*}(X)$, $d$ be in $H_2(X,\Z)$ and $(m_{1}, \ldots
    ,m_{\ell})$ be in $ \nn^{\ell}$. For $j$ in $ \{1, \ldots
    ,\ell\}$, the ($j$-th) \textit{twisted Gromov-Witten invariant
      with descendants}  is defined  by
    \begin{align*}
      \left\langle \tau_{m_{1}}(\gamma_{1}), \ldots
        ,\widetilde{\tau_{m_{j}}({\gamma}_{j})}, \ldots ,
        \tau_{m_{\ell}}(\gamma_{\ell})\right\rangle_{0,\ell,d}:=\int_{[X_{0,\ell,d}]^{\vir}}
      c_\top
      (\vb_{0,\ell,d}(j))\prod_{i=1}^{\ell}\psi_{i}^{m_{i}}\ev_{i}^*\gamma_{i}
    \end{align*}
    where $\ev_{i}:X_{0,\ell,d}\to X$ $(1\leq i\leq \ell)$ is the
    evaluation morphism to the $i$-th marked point and
    $[X_{0,\ell,d}]^{\vir}$ is the virtual class of $X_{0,\ell,d}$ (see \cite{Fatenchi-Behrend-intrinsic-normal-cone}).
  \end{defn}

\subsubsection{Twisted quantum product}
\label{subsubsec:twisted_quantum_product}



\begin{notn}\label{notn:La,Pi,specmori,spectoric}
Denote by $\mori{X}\subset H_2(X,\Z)$ the Mori cone of $X$, generated as a semi-group by numerical
classes of irreducible curves in $X$
$$
\mori{X}=\left\{\sum_{\stackrel{\text{\tiny $C$ irreducible curve,}}{\text{\tiny finite sum}}}n_C [C],\quad  n_C\in \N, [C]\text{ numeric class of $C$}\right\}.
$$  
The semigroup algebras of $\mori{X}$ and $H_2(X,\Z)$ will be
respectively denoted by $\La$ and $\Pi$~:
$$
 \La=\C[\mori{X}]=\C[Q^d, d\in \mori{X}],\qquad \Pi=\C[H_2(X,\Z)]=\C[Q^d, d\in H_2(X,\Z)],
$$
where $Q^d$ are indeterminates satisfying relations~: $Q^d.Q^{d'}=Q^{d+d'}$. Associated schemes to $\La$ and $\Pi$ are~: 
$$
\specmori:=\Spec \La,\qquad \mathbf{T}:=\Spec \Pi.
$$
\end{notn}
 The scheme $\specmori$ is an irreducible, possibly singular, affine
 variety of dimension $r$. Points of $\specmori$ are
 characters
 of $\mori{X}$. If $q$ is such a character, denote
 by $q^d$ its evaluation on $d$ in $\mori{X}$.  Since $X$ is
 projective, the Mori cone is strictly convex and there exists a unique
 character sending any $d$ in $ \mori{X}\setminus\{0\}$ to $0$~; it corresponds to the maximal ideal 
 $\langle Q^d, d\in \mori{X}\setminus\{0\}\rangle$. We
 will denote this point by $\0$.

 The scheme $\spectoric\simeq (\C^*)^r$ is an algebraic torus of rank $r$.
 In \cite{Cox-Katz-Mirror-Symmetry}, the point $\0\in\specmori\setminus \spectoric$ is called the \emph{large radius
 limit} of $\spectoric$.


The \textit{small twisted quantum product} can now be defined.
Let $q$ be in $\specmori$ and   $\gamma_{1},\gamma_{2}$ be in $ H^{2*}(X)$. 
The twisted small  quantum product is defined by
\begin{align}
  \label{defi:quantum_product_parameter_q} 
    \gamma_{1}\twprod{q}\gamma_{2}&:= \sum_{a=0}^{s-1}\sum_{d\in H_2(X,\Z)}
      q^d\left\langle \gamma_{1},
      \gamma_{2},\widetilde{T}_{a}\right\rangle_{0,3,d}T^{a}
\end{align}
  whenever this sum is convergent.
Notice that this twisted quantum product is the non-equivariant limit of
$\bullet^{\mathbf{e}_{\lambda}}_{\tau}$ in \cite[p.5]{iritani_quantum_2011}. Remark 2.2 in
\cite{iritani_quantum_2011} implies that the twisted quantum product $\bullet^{\tw}_{q}$ is associative,
  commutative, with unity $T_{0}:=\mathbf{1}$.


\begin{assumption} \label{ass:convergenceset}
We assume that $(\omega_X \otimes \lb_1\otimes\cdots\otimes \lb_k)^{\dual }$ is nef.
\end{assumption}

Iritani proves in \cite{iritani_convergence}, that under this assumption, there exists an open subset $\convmori$ of $\specmori$ containing $\mathbf{0}$ such that~: $$\forall q\in \convmori, \forall \gamma_1,\gamma_2\in H^{2*}(X), \gamma_1\twprod{q}\gamma_2  \text{\ is convergent.}$$


\begin{notn}\label{notn:parametre_M}
We denote by $\convtoric$ the complex nonsingular variety $\convtoric:=\convmori\cap \spectoric$. 
\end{notn}


\subsubsection{Twisted quantum $D$-module}


Let $(B_1,\ldots, B_r)$ be the basis of $H_2(X,\Z)$ fixed in Notation \ref{notn:base_of_cohomology}.
For $a\in\{1,\ldots, r\}$, put $q_a=Q^{B_a}$. 
We have: 
$$\Pi=\C[H_2(X,\Z)]\isom\C[q_1^{\pm},\ldots,
q_r^{\pm}] ;$$ 
If $d=\sum_{a=1}^r d_a B_a$ we get
$Q^d=\prod_{a=1}^r q_a^{d_a}$. Viewing the $q_a$'s as coordinates of $\spectoric$ we get, for any $q\in\spectoric$,
$q^d= \prod_{a=1}^r q_a^{d_a}$.

Let $z$ be another variable~;  we write $\C$ for $\Spec \C[z]$. We define $r+1$ differential operators on 
$\spectoric\times \C$ by~:
 \begin{align*}
\delta_{a}&:=q_{a}\partial_{q_{a}}, a\in\{1,\ldots, r\},\mbox{ and } \delta_{z}:=z\partial_{z}. 
\end{align*}

We denote by $F$ the trivial holomorphic vector bundle of fiber $H^{2*}(X)$ over $\convtoric\times
\C$ 
together with the following meromorphic connection:
  \begin{align}\label{eq:definition_de_nabla_twist}
    \nabla_{\delta{a}}:=\delta_{a}+\frac{1}{z}T_{a}\twprod{q}   ,\ \
    \nabla_{\delta_{z}}&:=\delta_{z}-\frac{1}{z}\eulerclass\twprod{q}+\mu
  \end{align} where $ \mu$ is the diagonal morphism defined by
  $\mu(T_{a}):=\frac{1}{2}\left({\deg(T_{a})}-(\dim_{\cc}X-\rank
    \vb)\right)T_{a} $ and $ \eulerclass:=c_{1}(\tang_X) -c_{1}(\vb)$. 
The couple $(F,\nabla)$ is called the \textit{twisted Quantum $D$-module} of $(X,\vb)$ and denoted by $\QDM(X,\vb)$.

We define a multi-valued
meromorphic section $L^{\tw}$ of $\Hom(F,F)$ by~:  
\begin{align}\label{eq:definition_of_L}
 L^{\tw}(q,z)\gamma= q^{-T/z}\gamma- \sum_{a=0}^{s-1}\sum_{\stackrel{H_2(X,\Z)}{d\neq 0}}q^{d}\left\langle
      \frac{q^{-T/z}\gamma}{z+\psi},\widetilde{T_{a}}\right\rangle_{0,2,d} T^{a}
\end{align}
where $\frac{1}{z+\psi}:=\sum_{k\in \nn} (-1)^{k}\psi^{k}z^{-k-1}$ and
$q^{-T/z}:=q^{-T_1/z}\ldots q^{-T_r/z}:=e^{-z^{-1}\sum_{a=1}^{r}T_{a}\log (q_{a})}$ and $\log(q_a)$ is the multi-valued function, or any determination of the logarithm  on a simply connected open subset of $\convtoric$.

Define a pairing by: $(\gamma_{1},\gamma_{2})^{\tw}:=\int_{X}\gamma_{1}\cup\gamma_{2}\cup
\ctopvb.$
This pairing is degenerated and its kernel is $\ker \mc$ where $\mc:  H^{2*}(X)\to  H^{2*}(X)$ sends
$\alpha$ to $\ctopvb \cup \alpha.$
\begin{prop}\label{prop:nabla,flat+Ltw}
 \begin{enumerate}
  \item\label{item:5bis}  The connection $\nabla$ is  flat. 
  \item\label{item:6bis} For $a$ in $\{1, \ldots ,r\}$ and $\gamma\in H^{2*}(X)$ we have
\begin{align*}
  \nabla_{\delta_{a}}L^{\tw}(q,z)\gamma&=0,  &
  \nabla_{\delta_{z}}L^{\tw}(q,z)\gamma=L^{\tw}(q,z)\left(\mu-\frac{c_{1}(\tang_X)-c_{1}(\vb)}{z}\right)\gamma 
\end{align*}
\item\label{item:7bis} For any endomorphism $u$ of $H^{2*}(X)$, we put
  $z^{u}:=\exp(u\log z)$. The multi-valued cohomological function
  $L^{\tw}(q,z)z^{-\mu}z^{c_{1}(\tang_X)-c_{1}(\vb)}$ is
  a fundamental solution of $\nabla$.
\item For any $\gamma_{1},\gamma_{2} \in H^{2*}(X)$, we have
  \begin{displaymath}
    (L^{\tw}(q,-z)\gamma_{1},L^{\tw}(q,z)\gamma_{2})^{\tw}=(\gamma_{1},\gamma_{2})^{\tw}.
  \end{displaymath}
\end{enumerate}
\end{prop}

 
 \begin{proof}
   This proof is completely parallel to the one of Proposition 2.4 in
   \cite{Iritani-2009-Integral-structure-QH}, using the twisted axioms (see Appendix \ref{sec:twisted-axiom-for-GW}).
 \end{proof}

\subsection{Quantum $\gd$-module  for complete intersection subvarieties}\label{subsec:geom-interpr-reduc} 

\begin{assumption}\label{ass:ample,dim,3} In this section, we assume that $\dim_\C X\geq k+3$ and 
  that the line bundles $\lb_1,\ldots,\lb_k$ are ample. This makes it
  possible to use Hyperplane and Hard Lefschetz Theorems.
\end{assumption}

\begin{notn}\label{notn:generic_section}
Fix a generic section of $\vb$, and denote by $Z$ the projective subvariety defined by this section.
 By Bertini's theorem, $Z$ is a smooth complete intersection subvariety of $X$.
Denote by $\iota:Z\hookrightarrow X$ the corresponding closed embedding.
\end{notn}

By Lefschetz theorem we have
\begin{align}
  \label{eq:Lef}
  H^{2*}(Z)=\im \iota^*\oplus \ker \iota_{*}
\end{align}
and $\ker \iota_{*}\subset H^{\dim_{\cc}Z}(Z)$.
We put $ H^{2*}_{\amb}(Z):=\im \iota^*$, this is the part of the cohomology of $Z$ coming from the ambient space $X$.
We have an isomorphism $H^2(X)\simeq  H^2(Z)$.

\begin{rem} \label{rem:mavlyutov} It should be possible to improve
Assumption \ref{ass:ample,dim,3}, at least for toric varieties. For example, if 
$X$ is a toric projective variety of dimension at least $3$, $k=1$ and $\lb_1$ is a nef (not necessary ample) line bundle on $X$,
then Theorem 5.1 of \cite{mavlyutov_chiral_2000}  ensures that $Z$ is a smooth connected hypersurface satisfying~: $H^{2*}(Z)=\im \iota^*\oplus \ker \iota_*$.
\end{rem}

\begin{prop}[See Corollary 2.3 in \cite{iritani_quantum_2011}]\label{prop:group-morphism} Using Notation \ref{notn:generic_section}, and under Assumption \ref{ass:ample,dim,3},
 for any
  $\gamma_1,\gamma_2\in H^{2*}(X)$
$$
\iota^*(\gamma_1\twprod{q}\gamma_2)=\iota^*(\gamma_1)\Zprod{q}\iota^*(\gamma_2),
$$
where $\Zprod{}$ is the quantum product on $Z$.\qed
\end{prop}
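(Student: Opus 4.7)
\medskip

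\noindent\textbf{Proof sketch proposal.} The strategy is to reduce the statement to a comparison of Gromov-Witten correlators between the twisted theory on $(X,\vb)$ and the untwisted theory on $Z$. The key geometric input is the functorial identity
\[
\iota_{0,n,d,*}\,[Z_{0,n,d}]^{\vir} \;=\; c_\top(\vb_{0,n,d}) \cap [X_{0,n,d}]^{\vir}
\]
in $A_{*}(X_{0,n,d})$, where $\iota_{0,n,d}: Z_{0,n,d} \hookrightarrow X_{0,n,d}$ is the closed embedding induced by $\iota$. This holds because $Z_{0,n,d}$ is the zero locus of the section of $\vb_{0,n,d}=R^{0}\pi_{*}\ev_{n+1}^{*}\vb$ obtained by composing the universal stable map with the chosen section of $\vb$; the identity is the expected excess-intersection / virtual pullback formula (Kim-Kresch-Pantev, Behrend-Fantechi).

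Combining this with the exact sequence \eqref{eq:suite,exact,E,0,l,d} --- which gives $c_\top(\vb_{0,n,d}) = c_\top(\vb_{0,n,d}(j)) \cdot \ev_{j}^{*}c_\top(\vb)$ --- and the projection formula, the first step is to establish, for any $\gamma_{1}, \ldots ,\gamma_{n}\in H^{2*}(X)$ and $d'\in H_{2}(Z,\Z)$ with image $d=\iota_{*}d'\in H_{2}(X,\Z)$:
\[
\bigl\langle \iota^{*}\gamma_{1}, \ldots ,\iota^{*}\gamma_{n}\bigr\rangle^{Z}_{0,n,d'}
\;=\;
\bigl\langle \gamma_{1}, \ldots ,\widetilde{c_{\top}(\vb)\cup \gamma_{j}}, \ldots ,\gamma_{n}\bigr\rangle^{X,\vb}_{0,n,d}.
\]
When $d\in H_{2}(X,\Z)$ is not of the form $\iota_{*}d'$ I expect the left-hand side to be absent from the sum (and if the right-hand side is non-zero, Assumption \ref{ass:ample,dim,3} with Lefschetz's theorem on $H_{2}$ --- applicable since $\dim_{\C} Z\geq 3$ --- yields the bijection $H_{2}(Z,\Z)\xrightarrow{\sim} H_{2}(X,\Z)$, so this situation does not occur).

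Next, applying $\iota^{*}$ to the twisted product and using Proposition \ref{prop:compatibilite,produit tw/red} together with the diagram \eqref{diag:lefchetz}, one has $\iota^{*}(\gamma_{1}\twprod{\tau_{2}}\gamma_{2}) = f(\overline{\gamma}_{1}\redprod{q_\tau}\overline{\gamma}_{2})$ where $f$ is the Lefschetz isomorphism. Expanding the reduced product with Definition \ref{defi,reducedGW} and using the correlator identity of the previous step identifies each coefficient with a $Z$-invariant. The last point to check is that the Poincar\'e-dual basis pairs up correctly: for $\gamma_{1},\gamma_{2}\in H^{2*}(X)$ one verifies via the projection formula and $\iota_{*}(1)=c_{\top}(\vb)$ that
\[
\bigl(\overline{\gamma}_{1},\overline{\gamma}_{2}\bigr)^{\red}
=\int_{X}c_{\top}(\vb)\cup \gamma_{1}\cup\gamma_{2}
=\int_{Z}f(\overline{\gamma}_{1})\cup f(\overline{\gamma}_{2}),
\]
so $f$ is an isometry from $(\overline{H^{2*}(X)},(\,,\,)^{\red})$ to $(H^{2*}_{\amb}(Z),\text{Poincar\'e})$; hence $\bigl(f(\phi^{a})\bigr)$ is the Poincar\'e-dual basis of $\bigl(f(\phi_{a})\bigr)$ in $H^{2*}_{\amb}(Z)$. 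The sums over $d$ on both sides match via the Lefschetz isomorphism $H_{2}(Z,\Z)\simeq H_{2}(X,\Z)$, which also identifies $q^{d}_{\tau_{2}}=q^{d'}_{\iota^{*}\tau_{2}}$.

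The main obstacle is the geometric identity of virtual classes in Step 1; everything else is bookkeeping relating the two cohomology rings through $\iota^{*}$. A secondary technical point is to handle classes $d\in H_{2}(X,\Z)\setminus \iota_{*}H_{2}(Z,\Z)$: under Assumption \ref{ass:ample,dim,3} the Lefschetz isomorphism makes this set empty, but without this assumption one would need either a vanishing statement for $\langle\cdots\widetilde{c_{\top}(\vb)\cdot}\cdots\rangle^{X,\vb}_{0,n,d}$ when $d$ is not realized by a curve in $Z$, or a more delicate argument using that such moduli spaces contribute trivially after intersecting with $c_{\top}(\vb_{0,n,d})$.
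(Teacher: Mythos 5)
Your Step 1 is exactly the input the paper itself relies on: its proof of this proposition is a two-line citation to Proposition 4 of \cite{Pand-after-Givental} and Corollary 2.3 of \cite{iritani_quantum_2011}, the latter resting on the functoriality of virtual classes of \cite{Kim-Kresch-Pantev-lci-virtual-class}, which is precisely your identity $\iota_*[Z_{0,n,d'}]^{\vir}=c_\top(\vb_{0,n,d})\cap[X_{0,n,d}]^{\vir}$ combined with $c_\top(\vb_{0,n,d})=c_\top(\vb_{0,n,d}(j))\cdot\ev_j^*c_\top(\vb)$. So up to that point you are on the same route. The genuine gap is in your last step: your bookkeeping only identifies the component of $\iota^*(\gamma_1)\Zprod{q}\iota^*(\gamma_2)$ lying in $H^{2*}_{\amb}(Z)$. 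The small quantum product on $Z$ is computed with a Poincar\'e-dual basis of the \emph{full} cohomology $H^{2*}(Z)=\im\iota^*\oplus\ker\iota_*$; since the two summands are Poincar\'e-orthogonal (because $\int_Z\iota^*\gamma\cup\alpha=\int_X\gamma\cup\iota_*\alpha=0$ for $\alpha\in\ker\iota_*$), the product of two ambient classes is the ambient expression you wrote \emph{plus} $\sum_b\sum_{d'}q^{d'}\langle\iota^*\gamma_1,\iota^*\gamma_2,\alpha_b\rangle^Z_{0,3,d'}\,\alpha^b$, where $\alpha_b$ runs over a basis of the primitive part. The proposition asserts equality with $\iota^*(\gamma_1\twprod{q}\gamma_2)$, which is ambient, so you must prove these primitive contributions vanish; this does not follow from Step 1, because $\ev_3^*\alpha_b$ is not pulled back from $X_{0,3,d}$, so the KKP identity and the projection formula give no handle on $\langle\iota^*\gamma_1,\iota^*\gamma_2,\alpha_b\rangle^Z_{0,3,d'}$.

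The standard way to close this (used in the references the paper cites) is deformation invariance plus monodromy: varying the generic section of $\vb$ does not change the Gromov--Witten invariants of $Z$, ambient classes are monodromy-invariant, so $\alpha\mapsto\langle\iota^*\gamma_1,\iota^*\gamma_2,\alpha\rangle^Z_{0,3,d'}$ is a monodromy-invariant functional on the primitive cohomology; for an ample complete intersection the monodromy representation on the primitive part has no nonzero invariants, hence the functional is zero. (A parity argument suffices only when $\dim_\C Z$ is odd.) The rest of your sketch is fine: the isometry property of $f$, the orthogonality of the Lefschetz splitting, and the identification of curve classes and Novikov variables through $H_2(Z,\Z)\simeq H_2(X,\Z)$; note also that KKP functoriality is in general stated for the sum $\sum_{\iota_*d'=d}[Z_{0,n,d'}]^{\vir}$, which under Assumption \ref{ass:ample,dim,3} reduces to a single term, as you observed.
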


We define the trivial vector bundle, denoted by $F^{Z}_{\amb}$, of fiber $H^{2*}_{\amb}(X)$ over
$\convtoric_{Z}\times \cc$ where $\convtoric_{Z}$ is the subset of $H^2(Z,\C)/\Pic(Z)$ where the
quantum product on $Z$ is convergent\footnote{We use the same parameter $q$ because of the
  isomorphism $\iota^*: H^2(X)\simeq H^2(Z)$}.  The connection $\nabla^{Z}$ is defined via the same
formula as $ \nabla$ with the quantum product of $Z$ and $\eulerclass:=c_{1}(\tang_Z)$ and
  \begin{displaymath}
    \mu^{Z}(\psi_{a})=\frac{1}{2}\left({\deg (\psi_{a})}-{\dim_{\cc}Z}\right)\psi_{a}.
  \end{displaymath}
where $(\psi_{a})$ is a basis of $H^{2*}(Z)$. Proposition \ref{prop:group-morphism} implies that
this bundle is stable by $\nabla^{Z}$. We denote by $\QDM_{\amb}(Z)=(F_{\amb}^{Z},\nabla^{Z})$.




 \begin{cor} \label{cor:surj,twisQDM,lciQDM}
Using Notation \ref{notn:generic_section}, and under Assumption \ref{ass:ample,dim,3}. The morphism
$\iota^{*}$ induces a surjective morphism $\iota^{*}:\QDM(X,\vb) \twoheadrightarrow \QDM_{\amb}(Z)$. 
\end{cor}

\begin{proof}
  It is clearly a surjective morphism of vector bundles.
 Proposition \ref{prop:group-morphism} implies that
 $\iota^{*}(\nabla_{\delta_{a}}\gamma)=\nabla^{Z}_{\delta_{a}}\iota^{*}\gamma$. The adjunction formula gives~:
$c_{1}(\tang_Z)=\iota^*(c_{1}(\tang_X) -c_{1}(\vb))$. Since the dimension of
$Z$ is the dimension of $X$ minus the rank of $\vb$, we deduce that
$\mu^{Z}(\iota^{*}\gamma)=\iota^{*}({\mu}({\gamma}))$.
This implies  $\iota^{*}(\nabla_{\delta_{z}}\gamma)=\nabla^{Z}_{\delta_{z}}\iota^{*}\gamma$.
\end{proof}

\subsection{Reduced quantum $ D$-module }
\label{subsec:reducedQDM}

Consider the quotient $\overline{ H^{2*}(X)}:= H^{2*}(X)/\ker \mc$ and call it the \textit{reduced cohomology ring of $(X,\vb)$}.
In this section, we define a "reduced"  quantum product on $\overline{ H^{2*}(X)}$, which enables us to define a "reduced" quantum $\gd$-module.

This reduced quantum $D$-module turns out to be isomorphic to the ambient part of the quantum $D$-module of the subvariety $Z$ defined in Subsection \ref{subsec:geom-interpr-reduc}. 

\smallskip




Since $ \mc$ is
a graded morphism, the reduced cohomology ring $\overline{H^{2*}(X)}= H^{2*}(X)/\ker \mc$ is naturally
graded. For $\gamma\in H^{2*}(X)$, we denote by $\overline{\gamma}$ its
class in $\overline{H^{2*}(X)}$.
Denote by $\overline{F}$ the trivial bundle with fiber $ \overline{ H^{2*}(X)}$ over
$\convtoric\times \cc$.  
For any $\gamma_{1},\gamma_{2}\in H^{2*}(X)$, define the reduced pairing $(\cdot,\cdot)^{\red}$ which is a bilinear form  on
$\overline{H^{2*}(X)}$ by 
\begin{align}\label{eq:defi,()red}
  (\overline{\gamma}_{1},\overline{\gamma}_{2})^{\red}:=(\gamma_{1},\gamma_{2})^{\tw}.
\end{align}
The reduced pairing is a well defined and a non degenerate bilinear form.
Put $s'=\dim_\C\overline{H^{2*}(X)}$.
Let $(\phi_{0}, \ldots ,\phi_{s'-1})$ be a homogeneous basis of
$\overline{H^{2*}(X)}$ and denote $(\phi^{0}, \ldots ,\phi^{s'-1})$ its
dual basis with respect to $(\cdot,\cdot)^{\red}$.
Let ${\gamma}_{1}, \ldots ,{\gamma}_{\ell}$ be classes in ${H^{2*}(X)}$.
Let $d$ be in $H_2(X,\Z)$. 
Using Definition \ref{defi:twisted-GW-invariants}, we define the \textit{reduced
      Gromov-Witten invariant} by
  \begin{align*}
    \langle\overline{\gamma}_{1}, \ldots
    ,\overline{\gamma}_{\ell}\rangle_{0,\ell,d}^{\red}:=
    \langle{\gamma}_{1}, \ldots
    ,\widetilde{\ctopvb{\gamma}}_{\ell}\rangle_{0,\ell,d}=\int_{[X_{0,\ell,d}]^{\vir}}\ctop(\mathcal{E}_{0,n,d})\prod_{i=1}^{\ell}
    \ev_{i}^{*}\gamma_{i}
\end{align*}
  By the twisted $S_{\ell}$-symmetric axiom (\cf Axiom
  \ref{item:twisted,symmetry}), the reduced Gromov-Witten invariants
  are well defined on the class in $\overline{H^{2*}(X)}$.
  Notice that the reduced Gromov-Witten invariants are symmetric with respect to  the $\ell$ entries.

The \textit{reduced quantum product} is
  \begin{align*}
    \overline{\gamma}_{1}\redprod{q}\overline{\gamma}_{2}:=
    \sum_{a=0}^{s'-1}\sum_{d\in H_2(X,\Z)}q^{d}\left\langle
      \overline{\gamma}_{1},
      \overline{\gamma}_{2},{{\phi_{a}}}\right\rangle^{\red}_{0,3,d}{\phi^{a}}.
  \end{align*}
  The convergence domain of $\redprod{q}$ contains
  $\convtoric$. We will restrict ourselves to $\convtoric$.
Define the following connection on the trivial bundle $\overline{F}$~:
\begin{align*}
  \forall a\in \{1, \ldots ,r\},\quad \overline{\nabla}_{\delta{a}}&:=\delta_{a}+\frac{1}{z}\overline{T}_{a}\redprod{q}
  \\
  \overline{\nabla}_{\delta_{z}}&:=\delta_{z}-\frac{1}{z}\overline{\eulerclass}\redprod{q}+\overline{\mu}
\end{align*}
where $\overline{\mu}$ is the diagonal morphism defined by
$\overline{\mu}({\phi_{a}}):=\frac{1}{2}\left({\deg(\phi_a)}-({\dim_{\cc}X-\rank
    \vb})\right){\phi_{a}}$ and
$\overline{\eulerclass}:=\overline{c_{1}(\tang_X)} -\overline{c_{1}(\vb)}$.
\begin{defn}\label{defi:reduced,QDM}
  The couple $(\overline{F},\overline{\nabla})$ is called the \textit{reduced quantum $D$-module of $(X,\vb)$} and denoted by $\overline{\QDM}(X,\vb)$.
\end{defn}

\begin{prop}\label{prop:isomorphisme_QDM_reduit_et_ambiant}
\begin{enumerate}
\item The connection $\overline \nabla$ is flat.
\item Under assumption \ref{ass:ample,dim,3}, let $Z$ be the  subvariety defined by a generic section of $\vb$. There exists an isomorphism of $\mathcal{D}$-modules $f:\ov{\QDM}(X,\vb)\isom {\QDM}_{\amb}(Z)$ making the following diagram 
commutative:
\begin{displaymath}
    \xymatrix{%
    &\QDM(X,\vb)\ar@{->>}[dr]^{\iota^{*}} \ar@{->>}[dl]_{p}&\\ 
    \overline{\QDM}(X,\vb)\ar[rr]^{\sim}_{f} & &\QDM_{\amb}(Z) %
    }
  \end{displaymath}
\end{enumerate}
where $p$ is the natural projection on the quotient.
\end{prop}

\begin{proof}
For any $\gamma_1,\gamma_2\in H^{2*}(X)$ and any $a\in\{0,\ldots,s-1\}$ we have~:
\begin{align*}
     \overline{\gamma_{1}\twprod{q}\gamma_{2}}&=\overline{\gamma}_{1}\redprod{q}\overline{\gamma}_{2}\mbox{
     \quad and \quad}
     \overline{\mu(T_{a})}=\overline{\mu}(\overline{T_{a}}).
\end{align*}
It follows that, for any $\gamma \in H^{2*}(X)$,
  \begin{align}\label{eq:connexions_compatibles}
      \overline{\nabla \gamma}=\overline{\nabla}\overline{\gamma}.
  \end{align}
and  $\ov\nabla$ is flat since $\nabla$ is.

As for the second point, consider the following diagram, where we make use 
of notations of \S. \ref{subsec:geom-interpr-reduc}. 
\begin{align}\label{diag:lefchetz}
  \xymatrix{H^{2*}(X)\ar[rr]^{ \mc} \ar@{->>}[rd]^{p} \ar@{->>}@/_1pc/ 
[rdd]_{\iota^{*}}& &H^{2*}(X)  \ar@{->>}[ld]_{p}\\
&\overline{H^{2*}(X)}\ar@{-->}[d]^-{f}\\
& H^{2*}_{\amb}(Z)\ar\ar@/_1pc/@{^{(}->} 
[ruu]_{\iota_{*}}
}
\end{align}
The morphism $f$ is well defined by $f:\overline{\gamma}\mapsto
\iota^*\gamma$. 
By the decomposition \eqref{eq:Lef}, 
$f$ is an isomorphism. 

This diagram and Corollary \ref{cor:surj,twisQDM,lciQDM} gives the required isomorphism between vector bundles~; Formula (\ref{eq:connexions_compatibles}) 
ensures that the connections are compatible. 
\end{proof}

\begin{rem}
The reduced quantum $D$-module does exist even if the assumption \ref{ass:ample,dim,3} is not satisfied~; that is if the subvariety $Z$ is not well defined. 
It is used in \cite{Sevenheck-GKZ-log-Frob-manifold}.
\end{rem}

We now come to the reduced fundamental solutions.
   
\begin{lem}\label{lem:L_et_kerm_c}
For any $(q,z)$ in $\convtoric\times \C$, we have~: $\ L^{\tw}(q,z)(\ker \mc)=\ker \mc.$
\end{lem} 

 \begin{proof}
   Let $\gamma$ be in $\ker \mc$ and $\al\in H^{2*}(X)$. Since $L^{\tw}(q,z)$ is an
   automorphism of $H^{2*}(X)$ and $\ker \mc$ is the kernel of the twisted pairing
   $(\cdot,\cdot)^{\tw}$ we find, using Proposition \ref{prop:nabla,flat+Ltw}:
 \begin{align*}
   \left(\al, L^{\tw}(q,z)\gamma\right)^{\tw}&= \left(L^{\tw}(q,-z)^{-1}\al,\gamma\right)^{\tw} = 0.
\end{align*}
Then $L^{\tw}(q,z)\gamma$ belongs to $\ker \mc$.
\end{proof}

This lemma implies that we can define a reduced $L$ function~: for
any $(q,z)\in\convtoric\times \C$ put
\begin{align} \label{eq:defi;Lred}
\overline{L}(q,z)\overline{\gamma}=\overline{L^{\tw}(q,z)\gamma}
\end{align}

The following corollary follows from Proposition \ref{prop:nabla,flat+Ltw}.
\begin{cor}\label{cor:reduced,quantum,D,module}
We have the following properties.
  \begin{enumerate}
  \item\label{item:9} A fundamental solution of $\overline{\nabla}$ is given by
    $\overline{L}(q,z)z^{-\overline{\mu}}z^{\overline{c_{1}(\tang_X)}-\overline{c_{1}(\vb)} }$.
  \item\label{item:10} For any $\gamma_{1},\gamma_{2}\in H^{2*}(X)$, we have
\begin{align*}
  ({\overline{L}(q,-z)}\overline{s}_{1},{\overline{L}(q,z)}\overline{s}_{2})^{\red}
  &=(\overline{s}_{1},\overline{s}_{2})^{\red} 
\end{align*}

\end{enumerate}
\end{cor}

\section{Batyrev algebras for toric varieties with a split vector bundle} 
\label{sec:Batyrev_algebras}

From now on, the smooth projective variety $X$ is a toric variety.
In \cite{batyrev-quantum-1993}, Batyrev constructs an algebra
from the fan of a smooth toric projective variety. If the variety is Fano, this algebra is its quantum cohomology ring. 
In this section, we define and study similar objects for  toric varieties endowed with a split vector bundle. 

\subsection{Fan for the total space of a split vector bundle}
\label{subsection:Fan}

Denote by $N$ a $n$-dimensional lattice and by $M$ its dual lattice.
Consider a fan $\Sigma$ of $N_\R=N\otimes \R$ and denote by
$\Sigma(l)$ the set of $l$-dimensional cones of $\Sigma$.  The set of
rays is $\Sigma(1)=\{ \theta_{1}, \ldots ,\theta_{m}\}$,
and for any $\theta\in\Sigma(1)$ we denote by $w_\theta$ the generator
of $ \theta\cap N$.

Let $X$ be the variety defined by $\Sigma$. We assume that $X$ is smooth and projective.

Let 
$\lb_1,\ldots,\lb_k$ be $k$ globally generated line bundles on $X$. Put $\vb=\oplus_{i=1}^k\lb_i$. Let $\tordiv_1,\ldots, \tordiv_k$ be $k$ toric divisors  of $X$, such that $\lb_i \simeq\go(\tordiv_i)$. To any ray $\theta\in \Sigma(1)$,
there is an associated toric Weil divisor denoted by $D_\theta$~; 
we write, in a unique way:
$$
\tordiv_i=\sum_{\theta\in \Sigma(1)} \ell_\theta^i D_\theta,\quad \ell_\theta^i\in\Z,\ i=1,\ldots, k
$$

Consider the $n+k$ dimensional lattice $N':=N\oplus \Z^k$. Let $(\eps_1,\ldots, \eps_k)$ be the canonical basis of $\Z^k$.
Denote by~:
$$
\mapyx~: N'=N\times \Z^k \lra N
$$ 
the natural projection.  Define a fan $\De$ in $N'_\R:= N'\otimes \R$ in the following way~:

\begin{itemize}\label{def:construction_de_Delta}
\item The rays of $\Delta$ are indexed by $\Sigma(1)\cup \{1,\ldots,k\}$~: 
$$
\begin{cases}
\text{For }\theta\in\Sigma(1), &\text{ put }  v_{\theta}:=(w_\theta,0)+\sum_{i=1}^{k}\ell_{\theta}^{i}(0,\eps_{i}),\\
\text{For }i\in\{1,\ldots,k\}, &\text{ put } v_i:=(0,\eps_{i}).\\
\end{cases}
$$
Then,
$$\De(1):=\{\rho_\theta:=\R^+v_\theta, \theta\in\Sigma(1)\}\cup \{\rho_i:=\R^+v_i,\ i \in\{1,\ldots,k\}\}.$$

\item a strongly convex polyhedral cone $ \sigma$ is in $\Delta$ if and only if $
  \mapyx(\sigma)\in \Sigma$.
\end{itemize}

\begin{notn}\label{notn:rays} In the following, for any $\rho\in\De(1)$, we denote by $v_\rho\in N$ the generator of $\rho$.
It will be convenient to make the distinction between rays $\rho_\theta$ coming from the base variety $X$, and rays $\rho_i$ coming from the split vector bundle $\vb$. We put~:
$$
\raysbase =\{\rho_\theta,\theta\in\Sigma(1)\},\ \raysfiber = \{\rho_1,\ldots,\rho_k\} \text{\quad so that }\De(1)=\raysbase\sqcup \raysfiber.
$$

\end{notn}

Let $Y$ be the toric variety associated to the fan $\Delta$. As $ X $ is smooth, $
Y$ is also smooth.  The scheme morphism induced by the projection $\mapyx~: N'\lra N$ is denoted by the same letter
$
\mapyx: Y  \lra X
$. 

\begin{prop}[\cite{2011-Cox-Little-Schenck}, Proposition 7.3.1 and Exercise 7.3.3]  The toric variety $ Y$ is the total space of the vector
  bundle $\vb^{\dual}$, dual of $\vb=\oplus_{i=1}^k\lb_i$. The natural projection is the toric morphism
  $\mapyx~: Y\to X$. \qed
\end{prop}

We will make use of the following easy result about cohomology classes:

\begin{prop} \label{prop:isom_cohomologie_espace_total-base}
The projection $\mapyx~: Y\lra X$ induces an isomorphism:
$$\mapyx^*~: H^{*}(X) \stackrel{\sim}{\lra} H^{*}(Y).$$
For $i\in\{1,\ldots, k\}$,  let $D_{\rho_i}$ be the toric divisor of $Y$ corresponding to the ray $\rho_i$, then
$$ \mapyx^*[\tordiv_i]= [-D_{\rho_i}]\text{ in } H^2(Y).$$
\end{prop}

To any toric Weil divisor $D=\sum a_\theta D_\theta$ of $X$, there is an associated piecewise linear function $\psi_D$, defined on the support 
$|\Sigma|=N_\R$ of $\Sigma$
and linear on each cone, such that $\psi_D(w_\theta)=-a_\theta$.
Since the line bundles $\lb_i$ are globally generated, the functions $\psi_{\tordiv_i}$ are concave. This gives~:
\begin{lem}\label{lem:Fan_is_convex}
The support 
$|\De|=\cup_{\sigma\in \De} \sigma$ of the fan $\De$ in  $N'_\R$ is convex.
\end{lem}


\begin{proof}
  First assume for simplicity that $k=1$ \ie $N'=N\times \mathbb{Z}$ and
  $L=\sum_{{\theta\in\Sigma(1)}}\ell_{\theta}D_{\theta}$. Let $\psi_{L}$ the concave piecewise linear
  function such that $\psi_{L}(w_{\theta})=-\ell_{\theta}$. Notice that
  $v_{\theta}=(w_{\theta},\ell_{\theta})=(w_{\theta},-\psi_{L}(w_{\theta}))$.

Let $\sigma$ be a cone of $\Delta$ of the form 
 $\sigma=\sum_{\theta\in\tau(1)} \rho_\theta+\rho_1$, where $\tau$ is the cone of $\Sigma$ obtained by projection of $\sigma$~: $\tau=\phi(\sigma)$.
A points $p$ of $\sigma$ can be written in $N_\R\times \R$ as~:
\begin{align*}
p	&=\sum_{\theta\in\tau(1)} t_\theta\left(w_\theta,-\psi_L(w_\theta)\right)+t_1(0_{N_\R},1),\qquad  t_\theta,t_1\in\R^+\\
	&=\left(\sum_{\theta\in\tau(1)} t_\theta w_\theta, -\psi_L\left( \sum_{\theta\in\tau(1)}w_\theta \right)+t_1\right) \mbox{\quad (by linearity of $\psi_L$ on $\tau$)}\\
\end{align*}
so that
\begin{displaymath}
  \sigma=\{(p_N,p_1)\in N_\R\times\mathbb{R}\mid p_N\in \phi(\sigma),\quad p_1\geq-\psi_{L}(p_N)\}.
\end{displaymath}

By definition, the support of $\Delta$ is the union of such cones $\sigma$. Since $|\Sigma|=N_\R$, one get~:
$$
|\Delta| = \{(p_N,p_1)\in\N_\R\times \R \mid p_1\geq-\psi_{L}(p_N)\}.
$$
Now, consider $p=(p_N,p_1)\in N, q=(q_N,q_1)$  two points in $|\Delta|$ and $t\in[0,1]$. Since
$\psi_L$ is concave, we have~:
$tp_1+(1-t)q_1\geq -\psi_L(tp_N+(1-t)q_N)$, and $(tp+(1-t)q)\in|\De|$ as required.

In case $k\geq 2$, we get
$$
|\Delta| = \{(p_N,p_1,\ldots, p_k)\in\N_\R\times \R^k \mid p_1\geq-\psi_{L_1}(p_N),\ldots, p_k\geq-\psi_{L_k}(p_N)\}.
$$
and $|\De|$ is also convex.
\end{proof}

\begin{exmp}Consider the fan of $ \pp^{1}$ given by
  $(N=\zz,w_{1}=1,w_{2}=-1)$, $\lb=\go(2)$ and $\tordiv=2D_{\theta_1}$.  The
  fan $ \Delta$ is given by the rays $ v_{\theta_1}=(1,2), v_{\theta_2}=(-1,0)$
  an $ v_{\tordiv}=(0,1)$ (\cf Figure \ref{fig:P1}). 
 \begin{figure}[ht]
 \centering
 \begin{tikzpicture}[scale=1]
   \fill[color=gray!25] (0,0) -- (0,3) -- (-3,3) -- (-3,0) -- cycle;
  \fill[color=gray!50] (0,0) -- (1.5,3) -- (0,3) -- cycle;
  \foreach \k in {-3,-2,-1,0,1,2,3}
  {\draw [dotted, very thin](\k,-1) -- (\k,3);}
    \foreach \k in {-1,0,1,2,3}
  {\draw [dotted, very thin](-3,\k) -- (3,\k);}
\draw [very thick,>=latex,->,black] (0,0) -- (1,2) node [right,near end] {$v_{\rho_{\theta_1}}$};
\draw [thin,black] (0,0) -- (1.5,3)  node [right ,near end] {$\rho_{\theta_1}$};
\draw [very thick,>=latex,->,black] (0,0) -- (-1,0) node [above,near end] {$v_{\rho_{\theta_2}}$};
\draw [thin, black] (0,0) -- (-3,0)  node [above ,near end] {$\rho_{\theta_2}$};
\draw [very thick,>=latex,->,black] (0,0) -- (0,1)  node [above left ,near end] {$v_{\rho_{\tordiv}}$};
\draw [thin, black] (0,0) -- (0,3)  node [above left ,near end] {$\rho_{\tordiv}$};
\draw (0,0) node  {$\bullet$};
\draw  (-6,1) node [above,right] {Fan $\De$ in $N'_\R$,};
\draw  (-6,0.4) node [above,right] {$N'=N\times \Z$.};
\draw [>=latex,->] (0,-1.5) -- (0,-2.2) node [midway, right] {$\mapyx$};
\draw [very thick,>=latex,->,black] (0,-3) -- (1,-3) node [above,near end] {$w_{\theta_1}$};
\draw [thin,black] (0,-3) -- (3,-3)  node [above ,near end] {$\theta_1$};
\draw [very thick,>=latex,->,black] (0,-3) -- (-1,-3) node [above,near end] {$w_{\theta_2}$};
\draw [thin, black] (0,-3) -- (-3,-3)  node [above ,near end] {$\theta_2$};
\draw (0,-3) node  {$\bullet$};
\draw  (-6,-2.4) node [above,right] {Fan $\Sigma$ in $N_\R$, };
\draw  (-6,-2.8) node [above,right] {$N=\Z$.};
\draw (4,0.5) node {$\leftrightsquigarrow$};
\draw (4,-3) node {$\leftrightsquigarrow$};
\draw (6.5,1) node  {$Y$, total space} ;
\draw (6.5,0.5) node  {of $\go_{\pp^1}(2)^{\dual }$} ;
\draw [>=latex,->] (6.5,-1.5) -- (6.5,-2.2) node [midway, right] {$\mapyx$};
\draw (6.5,-3) node {$X=\pp^1$} ;
\end{tikzpicture}
\caption{Fans $\Sigma$ and $ \Delta$ associated to $X=\pp^1$, $\tordiv=2D_{\theta_1}$}
\label{fig:P1}
  \end{figure}
\end{exmp}

\subsection{Definition and properties of Batyrev algebras associated to $(X,\vb)$.}
\label{subsection:Batyrev_algebra}

\subsubsection{Mori cone}
Let $X, Y$ and $\vb$ be as in section \ref{subsection:Fan}.
Using Proposition \ref{prop:isom_cohomologie_espace_total-base}, we will identify $H^2(X)$ and $H^2(Y)$, as well as the Mori cones of $X$ and $Y$.
%

\begin{notn}\label{notn:d_rho}
For any class $d$ of $H_2(Y,\Z)$ and ray $\rho$ of $\De(1)$ corresponding to the weil divisor $D_\rho$, we put
$$
d_\rho:=D_\rho.d=\int_d D_\rho \in\Z.
$$

There is an exact sequence~:
\begin{equation}\label{eq:toric_exact_sequence}
0\lra H_2(Y,\Z) \lra \Z^{\De(1)} \lra   N'  \lra 0,
\end{equation}
Where $N'=N\oplus \Z^k$ is the lattice defined in section \ref{subsection:Fan} and 
where the image of $d\in H_2(Y,\Z)$ is
$(d_\rho)_{\rho\in\De(1)}\in \Z^{\De(1)}$. 
We identify $H_2(Y,\Z)$ and its image in $\Z^{\De(1)}$.

For any real number $a$, we put $a^+= \max(a,0), a^-= \max(-a,0)$.
Also put, for any $d\in H_2(Y,\Z)$, $d^+=(d_\rho^+)_{\rho\in\De(1)}$ and $d^-=(d_\rho^-)_{\rho\in\De(1)}$. 
With the identification above, we have~:
$$
d=d^+-d^-.
$$
If $a$ is an element of $H_2(Y,\Z)\subset\Z^{\De(1)}$, we say that $a$ is \emph{supported by a cone} if the set 
$\{\rho\in\De(1)\mid a_\rho\neq 0\}$ is contained in a cone of $\De$.
\end{notn}
We will use the following facts~: 
\begin{lem}\label{lem:facts_Mori-cone} Let  $d$ be in $H_2(Y,\Z)$.
\begin{enumerate}
\item If $d^+$ is supported by a cone, then $-d\in \mori{Y}$.
\item If $d\in\mori Y\setminus\{0\}$, then $d^+$ is not supported by a cone.
\end{enumerate}
\end{lem}
\begin{proof}
\begin{enumerate}
\item  We have to show that, for any nef toric divisor $T$, $T.(-d)\geq 0$. Let $T$ be such a divisor and let $\psi$ be the piecewise linear concave function associated to $T$~:
  \begin{align*}
    T.d &= \sum _{\rho} -\psi(v_\rho) d^+_\rho-\sum _{\rho} -\psi(v_\rho) d^-_\rho\\
&=  -\psi(\sum _{\rho} v_\rho d^+_\rho)+\sum _{\rho} d^-_\rho \psi(v_\rho) &\text{ (\small $d^+$ supported by $\sigma$)}\\
&\leq  -\psi(\sum _{\rho}d^+_\rho v_\rho) +\psi(\sum _{\rho} d^+_\rho v_\rho)=0
&\text{ ($\psi$ \small concave and  $\sum_{\rho} d_\rho^+v_\rho=\sum_{\rho}d_\rho^-v_\rho$).}
  \end{align*}
\item If $d^+$ is supported by a cone, then $-d\in \mori{Y}$ and $d\in-\mori Y\cap \mori Y=0$.
\end{enumerate}
\end{proof}

\subsubsection{Twisted Batyrev algebra of $(X,\vb)$}

Let $\La$ be the semi-group algebra of $\mori{X}$, as defined in Notation \ref{notn:La,Pi,specmori,spectoric}.
Since the Mori cones of $X$ and $Y$ are identified, we have~:
\begin{align}\label{notn:anneau_Lambda}
\La &= \C[\mori{Y}]=\C[Q^d, d\in \mori{Y}].
\end{align}
Fix a set of indeterminates $(x_\rho)_{\rho\in\De(1)}$. We put~: 
$$
\La[x_\rho]:=\La[x_\rho, \rho\in\De(1)].
$$
For any $d\in H_2(Y,\Z)$ denote by $\qsrpol _d$ the polynomial~:
\begin{equation*}\label{def:QSR_polynomial}
\qsrpol _d:=x^{d^+}-Q^d x^{d^-}=\prod_{\rho\in \De(1)} x_\rho^{d_\rho^+} - Q^d \prod_{\rho\in\De(1) } x_\rho^{d_\rho^-}.
\end{equation*}
Let $M'$ be the dual lattice of $N'=N\oplus\Z^k$. For any $u\in M'$ denote by $Z_u$ the linear polynomial~:
\begin{equation*}\label{def:Lin_polynomial}
Z_u:=\sum_{\rho\in \De(1)} \langle u,v_\rho\rangle x_\rho.
\end{equation*}

\begin{defn}\label{def:batyrev_algebra} 
Consider the ring $\La[x_\rho]$ defined above.
The \textit{quantum Stanley-Reisner} ideal  of $\La[x_\rho]$ is the ideal $\QSR $ generated by the polynomials $\qsrpol _d$~:
\begin{align}
\label{def:QSR_ideal}
\QSR:=\left\langle \qsrpol _d=x^{d^+}-Q^d x^{d^-},\ d\in\mori{Y} \right\rangle
\end{align}
The \textit{linear ideal} of $\La[x_\rho]$, is the ideal $\Lin $ generated by the polynomials $Z_u$~:
\begin{align}
\label{def:Lin_ideal}
\Lin:=\left\langle Z_u=\sum_{\rho\in \De(1)} \langle u,v_\rho\rangle x_\rho, \ u\in M'  \right\rangle
\end{align}
The \emph{twisted Batytrev algebra of $(X,\vb)$} is the $\La$-algebra~:
$$
\bat:=\La[x_\rho]/(\QSR+\Lin).
$$
\end{defn}

\begin{rem}
\begin{enumerate}
\item Up to isomorphism, $\bat$ is well defined since it does not depend on the specific choice of the fan $\De$ (\ie choices of the fan $\Sigma$ and
toric divisors $\tordiv_i$). 
\item For any fan defining a smooth quasi-projective variety $Y$, we can define, as in Definition \ref{def:batyrev_algebra}, the (untwisted) \emph{Batyrev algebra of $Y$}.
However, for Proposition \ref{prop:quotient_by_QSR} and first point of Theorem \ref{thm:Batyrev_locally_free} to be true,
the support of the fan must be convex (in our case, this is equivalent to each $\tordiv_i$ being nef) of maximal dimension, and the anticanonical divisor $-K_Y$ must be nef.
\end{enumerate}
\end{rem}

The quantum Stanley-Reisner ideal $\QSR$ defined above is a deformation, parametrized by $\Spec(\La)$, of the following ideal~:
\begin{align}\label{def:SR_ideal} 
\SR & =\<x^{d^+}, d\in\mori{Y}\>~;
\end{align}
$\SR$ is the Stanley-Reisner ideal associated to the simplicial complex defined by $\De$ 
(see \cite{Bruns-Herzog-CM}). We have~:

\begin{prop} \label{prop:cohomology_of_toric_varieties}  There is a
natural isomorphism 
\begin{align*}
\C[x_\rho]/(\SR+\Lin)&\isom    H^{2*}(Y,\C)= H^{2*}(X,\C)\\
{x}_\rho &\longmapsto  [D_\rho]
\end{align*}
where $[D_\rho]\in  H^2(Y)$ is the class of the toric divisor $D_\rho$.
\end{prop}
\begin{proof} Since $\De$ is convex (Lemma \ref{lem:Fan_is_convex}) and $Y$ is quasi-projective, the proof of \cite{Fulton-toric} in the complete case can be adapted to our case, which shows that there is a well  defined isomorphism~
$\Z[x_\rho]/ (\SR+\Lin) \stackrel{\sim}{\longrightarrow} H^{2*}(Y,\Z)$ sending $x_\rho$ to $[D_\rho]$.
\end{proof}

\subsubsection{Residual Batyrev algebra of $(X,\vb)$}
\label{subsec:residual_Batyrev_ring}

From Proposition \ref{prop:isom_cohomologie_espace_total-base} there exists an isomorphism $H^2(X)\simeq H^2(Y)$~; via this isomorphism, we
have, for any toric  divisor $\tordiv_i$ and its corresponding ray $\rho_{\tordiv_i}$, $[\tordiv_{i}]=[-D_{\rho_{\tordiv_i}}]$.  

\begin{notn}\label{def:ctop_et_xtop}
Put~:
\begin{align*}
c_\top&:=  \prod_{i=1}^k [\tordiv_i]=\prod_{\rho\in\raysfiber} [-D_\rho] \in H^{2k}(X)\\
x_\top &:= \prod_{\rho\in\raysfiber} (-x_\rho)\in \La[x_\rho].
\end{align*}
Then $c_\top$ is the top Chern class of the fiber bundle $\vb=\oplus_{i=1}^k \lb_i$, and  $x_\top$ is sent to $c_\top$ via the morphism defined in Proposition \ref{prop:cohomology_of_toric_varieties}.
\end{notn}

\begin{defn}\label{def:residual_batyrev}Consider the algebra $\La[x_\rho]$ and the ideals $\QSR$ and $\Lin$ defined in \ref{def:batyrev_algebra}. Put $\GKZcom=(\QSR+\Lin)$.

The \emph{quotient ideal} of $\GKZcom$ by $x_\top$ is~:
$$
(\GKZcom:x_\top):=\{P\in \La[x_\rho],\quad  x_\top P\in \GKZcom\}.
$$
The \emph{residual Batyrev algebra} of $(X,\vb)$ is the $\La$-algebra~:
$$
\bat^{\res}:=\La[x_\rho]/(\GKZcom:x_\top),
$$
\end{defn}

\subsubsection{Main Theorem for Batyrev algebras}

The main properties of twisted and residual Batyrev algebras of $(X,\vb)$ are summed up in the following result~:

\begin{thm}\label{thm:Batyrev_locally_free}
Let $X$ be a toric smooth projective variety endowed with a split vector bundle $\vb=\oplus_{i=1}^k\lb_i$. 
Assume that each line bundle $\lb_i$ is nef, as well as $\omega_X\otimes {\lb_1}^{\dual}\otimes \cdots \otimes {\lb_k}^{\dual}$. 

Put $\La=\C[Q^d, d\in\mori{X}]$, $\specmori:=\Spec \La $ and denote by $\0$ the maximal ideal~
$\langle Q^d, d\neq 0\rangle$. Let $c_\top$ be the top chern class of $\vb$ and let
 $\mc$ be the morphism of multiplication  by $\ctop$ in $H^ {2*}(X)$.

There exists a Zariski neighbourhood $\freemori$ of $\0\in\specmori$ such that~:
\begin{enumerate}
\item Over $\freemori$, the twisted Batyrev algebra $\bat$ of $(X,\vb)$ is a locally free $\La$-module of rank \\ $\dim H^{2*}(X)$.
\item Over $\freemori$, if the line bundles $\lb_i$ are ample, then
 the residual Batyrev algebra $B^{\res} $ of $(X,\vb)$ is a locally free $\La$-module of rank $(\dim H^{2*}(X)-\dim\ker \mc)$.
\end{enumerate}
\end{thm}

\begin{rem}
A convenient neighbourhood $\freemori$ will be defined in Lemma \ref{lem:freeness_neighbourhood} and could 
 be explicitly computed by elimination algorithm (see \ref{rem:algo}). If $Y$ is Fano, $\freemori$ is the whole scheme $\specmori$. 
\end{rem}

The proof of Theorem \ref{thm:Batyrev_locally_free} will be given in section \ref{subsection:Proof_of_thm_Batyrev_locally_free}. 
We will actually rephrase its first part and show that the scheme morphism $\Spec B\ra \specmori$ is finite, flat, of degree $\dim H^{2*}(X)$ over $\freemori$.

Let us first study the quotient by the ideal $\QSR$ defined in \ref{def:QSR_ideal}.

\subsection{Quotient by the Quantum Stanley Reisner ideal}

In this section, we show~:

\begin{prop}\label{prop:quotient_by_QSR} 
Put ${\mathbf Q }:= \Spec(\La[x_\rho]/\QSR)$. 
Under assumptions of Theorem \ref{thm:Batyrev_locally_free},
the morphism $\mathbf Q  \ra \specmori$ is  flat of relative dimension 
$\dim X+k=\dim Y$. The schemes $\mathbf Q $ and $\specmori$ are Cohen-Macaulay.
\end{prop}

We will prove this proposition by 
performing a Gröbner degeneration of the Quantum Stanley-Reisner ideal. For that, we first need to consider a graded version of this ideal and
define a weight function on the monomials of the graded algebra. We then compute the initial ideal corresponding to this weight function in term of primitive classes introduced by Batyrev in \cite{batyrev-quantum-1993}.

\subsubsection{Graded $\QSR$ ideal}

Consider a new variable $h$ and define the graded $\La$-algebra $\La[x_\rho,h]$ with the grading given by  $\deg(h)=1$ and $ \deg(x_\rho)=1$.

Let $P$ be a polynomial in $\La[x_\rho]$. The homogenisation of $P$ in $\La[x_\rho,h]$ is~:
$$
P^h:=h^{\deg P}P\left(\frac{x_\rho}{h}\right)\in \La[x_\rho,h].
$$

Recall that the toric divisor $K_Y=-\sum_{\rho\in\De(1)}D_\rho$ is a canonical divisor of $Y$.
For any
$d\in H_2(Y,\Z)$, we have $\deg(x^{d^+})-\deg(x^{d^-})=\sum_\rho
D_\rho .d=-K_Y.d$. 
It follows that, for any $d$ in $H_2(Y,\Z)$,
$$
R_d^h=x^{d^+}h^{k^+}-Q^dh^{k^-}x^{d^-},\quad \text{where }k=K_Y.d.
$$		
\begin{defn}\label{def:graded_QSR-ideal} 
The \textit{graded quantum Stanley-Reisner} ideal  of $\La[x_\rho,h]$ is the homogeneous ideal $\QSR^h $ generated by the polynomials $\qsrpol^h _d$.
\end{defn}
\begin{rem}
\begin{enumerate}
\item If $-K_Y$ is nef, we get~:
$$
\QSR^h:=\left\langle \qsrpol^h _d=x^{d^+}-Q^dh^{-K_Y.d}x^{d^-},\ d\in\mori{Y} \right\rangle
$$
\item  The graded ideal $\QSR^h$ could be different from the ideal generated by the whole set of homogeneous polynomials $\{P^h, P\in\QSR\}$.
Under our assumptions, we conjecture that they are actually equal. 
\end{enumerate}
\end{rem}

\subsubsection{Weight function and monomial order}
Fix, once and for all, a strictly concave piecewise-linear function $\varphi$ of $|\De|$, rational on $N'$.
Since $\De$ is quasi-projective, such a function exists, corresponding to an ample $\Q$-divisor 
$A_\varphi=\sum_{\rho\in\De(1)} -\varphi(v_\rho)D_\rho$.

Define a weight function $\omega$ on the monomials of $\La[x_\rho,h]$ by setting, for any monomial $x^ah^k:=\prod_{\rho\in\De(1)}x_\rho^{a_\rho}h^k$~:
$$
\omega(x^ah^k)=\sum_{\rho\in\De(1)} -a_\rho \varphi(v_\rho).
$$
In particular, $\omega(h^k)=0$ for any integer $k$.
For convenience, we extend this function to any polynomial $P$ by setting~: $$\omega(P)=\max\{\omega(x^ah^k), x^ah^k\text{\ monomial of $P$}\}.$$
The \emph{initial form} of a polynomial $P=\sum_{i}\al_ix^{a_i}h^{k_i}$ is
$$
\mathrm{in}_\omega(P)=\sum_{i:  \omega(x^{a_i}h^{k_i})=\omega(P)}\al_ix^{a_i}h^{k_i}.
$$
This is not a term in general. The initial ideal $\mathrm{in}_\omega(I)$ of an ideal $I$  is 
the ideal generated by initial forms of elements of $I$.

Also define a new monomial order $\preceq $ on the variable $x_\rho,h$ by setting~:
$$
x^ah^k\preceq  x^{a'}h^{k'}\Longleftrightarrow 
\begin{cases}
\omega(x^ah^k)<\omega(x^{a'}h^{k'})\\
\text{or}\\
\omega(x^ah^k)=\omega(x^{a'}h^{k'}) \text{\ and }x^ah^k\trianglelefteq x^{a'}h^{k'}
\end{cases}
$$
Where $\trianglelefteq$ is any fixed monomial order  on the variables $\{x_\rho, h\}$.
The leading monomial of a polynomial $P$ for the order $\preceq$ will be denoted by $\Lm(P)$.

\begin{lem}\label{lem:initial_term_of_R_d} For any 
$d$ in the Mori cone of $Y$, $\Lm(\qsrpol_d^h)=\mathrm{in}_\omega(\qsrpol_d^h)=x^{d^+}.$
\end{lem}
\begin{proof} We compute~:
\begin{align*} 
\omega(x^{d^+})-\omega(h^{-K_Y.d}x^{d^-})&
= \sum_{\rho\in\De(1)} -d^+_\rho\varphi(v_\rho)-\sum_{\rho\in\De(1)} -d^-_\rho\varphi(v_\rho)=\sum_{\rho\in\De(1)} -d_\rho\varphi(v_\rho)= A_\varphi.d
> 0. 
\end{align*}
\end{proof}

\subsubsection{Primitive collections and classes}
\label{subsection:primitive,collections}

Primitive classes are specific elements in $H_2(Y,\Z)$ that generate the Mori cone of $Y$. They were introduced in (\cite{batyrev-quantum-1993} and \cite{cox-primitive-2008}).

\begin{defn} \label{defi:primitive,collection}
A subset  $\{\rho_1,\ldots,\rho_l\}$ of $\De(1)$ is called a primitive collection for $\De$ if
 $\{\rho_1,\ldots,\rho_l\}$ is not contained in a single cone of $\De$ but every proper subset is.
\end{defn}
Let $C=\{\rho_1,\ldots,\rho_l\}$ be a primitive collection, and
$v_1,\ldots,v_l$ be the generating vectors of $\rho_1\cap
N',\ldots,\rho_l\cap N'$.  Let $\sigma$ be the minimal cone of $\De$
containing $v=\sum_{i=1}^ lv_i$. Denote by $\rho'_1,\ldots,\rho'_s$ the
rays of $\sigma$ and $v'_1,\ldots,v'_s$ the primitive vectors of the
$\rho'_i$.  Since $\sigma$ is the minimal cone of $\De$ containing $v$,
the vector $v$ is in the relative interior of $\sigma$ and there exists $a_1,\ldots, a_s,$ real positive numbers,
such that~: $v=a_1v'_1+\cdots+a_s v'_s$.
Moreover, since $v$ is in ${N'}$ and the $v'_j$ are part of a basis  of
${N'}$ ($Y$ is non singular), the $a_{j}$'s are uniquely defined in $\N_{>0}$.

\begin{rem} With the above notations~: 
$
\{v_1,\ldots,v_l\}\cap\{v'_1,\ldots, v'_s\}=\varnothing.
$
(\cite {cox-primitive-2008}, proposition 1.9).
\end{rem}

Let $C=\{\rho_1,\ldots,\rho_l\}$ be a primitive collection and $v=\sum_{i=1}^l v_i=a_1v'_1+\cdots+a_s v'_s$ be as above. 
Since
$
\sum_{i=1}^l v_i-\sum_{j=1}^sa_jv'_j=0,
$
the exact sequence (\ref{eq:toric_exact_sequence}) shows that there exists a well defined element $d^C\in H_2(Y,\Z)$ such that~:
$$
d^C_{\rho}=
\left\{
\begin{array}{cl}
1 & \text{ if $\rho\in C$},\\
-a_j & \text {if $\rho=\R^+v'_j$, $j\in \{1,\ldots,s$\}},\\
0 & \text{otherwise}.
\end{array}
\right.
$$ 
\begin{notn}\label{notn:primitive_classes}
A \emph{primitive class } is a class $d^C\in H_2(Y,\Z)$ corresponding to  a primitive collection as above. 
We denote by~:
$$
\gp:=\{d^C \in H_2(Y,\Z), C \text{\ primitive collection}\}
$$
the set of primitive classes.
\end{notn}

\begin{prop}[\cite{cox-primitive-2008}, Propositions 1.9. and 1.10]
  \label{prop:mori_primitive_classes} Each primitive class is contained in the Mori cone $\mori{Y}$. The Mori cone is generated by primitive classes.
\end{prop}

\subsubsection{Initial ideal of the graded $\QSR$ ideal.}
\begin{lem}\label{prop:groebner_basis_over_field} Assume that the 
anticanonical divisor of $Y$ is nef. Let $F$ be the fraction field of $\La$. The set $\{\qsrpol^h_c, c\in\gp\}$ is a Gröbner basis, for the order $\preceq$, 
of the ideal generated by $\{\qsrpol^h_d, d\in \mori{Y}\}$ in $F[x_\rho,h]$.
\end{lem}

\begin{proof} 
First prove that, for any $d\in H_2(Y,\Z)$, there exists
a set of homogeneous polynomials $\{B_c\in F[x_\rho,h], c\in\gp\}$ such that~:
\begin{align}
\label{eq:standard_expression}
R_d^h &=\sum_{c\in\gp} B_cR^h_c,\quad\ \text{and }\quad \forall c\in \gp,\  \Lm(B_cR^h_c)\preceq  \Lm(R^h_d).
\end{align}
Let $E$ be the set of polynomials $\qsrpol^h_d$ which can not be expressed as in (\ref{eq:standard_expression}). Assume that $E$ is not empty and consider $R^h_d\in E$, whose leading monomial is minimal. 
Write $\qsrpol_d^h=x^{d^+}h^{k^+}-Q^d x^{d^-}h^{k^-}$ where $k=K_Y.d$.
Two cases may occur~:

$a)$ $\Lm(\qsrpol ^h_d)=x^{d^+}h^{k^+}$. If $d^+$ is supported by a cone, then $-d\in\mori Y$ by Lemma
\ref{lem:facts_Mori-cone}. By Lemma (\ref{lem:initial_term_of_R_d}), $\Lm(\qsrpol_{-d})=x^{(-d)^+}=x^{d^-}$ ; and $x^{d^-}\prec x^{d^+}$ which does not satisfy the assumption.

Then $d^+$ is not supported by a cone and there exists a primitive collection $C$ contained in the support of $d^+$. Denote by $c$ the class of $C$ and put $a=d^+-c^+\in \N^{\De(1)}$. Notice that $\min(d^-,a+c^-)+(d-c)^+=d^+-c^++c^-=a+c^-$ and $\min(d^-,a+c^-)+(d-c)^-=d^-$, which gives~:
\begin{align*}
\qsrpol^h _d-x^ah^{k^+}\qsrpol^h _c&=Q^c h^{k'} x^{\min(d^-,a+c^-)}\qsrpol ^h_{d-c}
\end{align*}
where $k'$ is the integer ensuring homogeneity. We also have~:
$$
\Lm(x^{\min(d^-,a+c^-)}\qsrpol ^h_{d-c})= \max(x^{d^+-c^++c^-},x^{d-})
$$
But  $x^{c^-}\prec x^{c^+}$ by Lemma (\ref{lem:initial_term_of_R_d}) and $x^{d^-}\prec x^{d^+}$ by assumption. It follows that~:
$$
\Lm(x^{\min(d^-,a+c^-)}\qsrpol^h_{d-c})\prec \Lm (R_d^h) 
$$
and $\Lm(\qsrpol ^h_{d-c})\prec \Lm(R_d^h)$.
By the minimality assumption, $\qsrpol_{d-c}^h$ admit a standard expression with zero remainder (\ref{eq:standard_expression}). This gives in return such an expression for $\qsrpol^h_d$ which contradicts $\qsrpol_d\in E$.

$b)$ $\Lm(\qsrpol^h_d)=x^{d^-}h^{k^-}$. 
Since $\qsrpol^h_{-d}=-Q^{-d}\qsrpol^h_d$ and 
$\Lm(\qsrpol^h_{-d})=\Lm(\qsrpol^h_{d})$ one may replace $d$ by $-d$ and apply the first case.

\smallskip

By (\ref{eq:standard_expression}), $\{\qsrpol^h_c, c\in\gp\}$ is a set of generators of the ideal and we can apply the Buchberger's criterion~: Let $c_1,c_2$ be two primitive classes. We have~:
\begin{align*}
S(\qsrpol^h_{c_1},\qsrpol^h_{c_2}):=&\frac{ \Lm(\qsrpol^h_{c_1})}{\text{gcd}\left(\Lm(\qsrpol^h_{c_1}),\Lm(\qsrpol^h_{c_2})\right)}\qsrpol^h_{c_2}-
\frac{ \Lm(\qsrpol^h_{c_2})}{\text{gcd}\left(\Lm(\qsrpol^h_{c_1}),\Lm(\qsrpol^h_{c_2})\right)}\qsrpol^h_{c_1}\\
=&x^{c_1^+-\min(c_1^+,c_2^+)}\qsrpol_{c_2}-x^{c_2^+-\min(c_1^+,c_2^+)}\qsrpol_{c_1}
=x^{\min(c_1^-,c_2^-)}Q^{c_1}\qsrpol^h_{c_2-c_1}.
\end{align*}
But (\ref{eq:standard_expression}) gives a normal expression with a zero remainder for $\qsrpol_{c_2-c_1}$ and then for $S(\qsrpol_{c_1},\qsrpol_{c_2})$.
\end{proof}

\begin{prop} \label{prop:initial_ideal_of_graded_QSR} The initial ideal of $\QSR^h$ for the weight function $\omega$ is~:
$$
\mathrm{in}_\omega(\QSR^h)=\langle \mathrm{in}_\omega(R^h_c), c\in\gp\rangle = \langle x^{c^+}, c\in\gp\rangle =\langle \ x^a,\ a\text{ is not supported by a cone}\rangle.
$$
\end{prop}
\begin{proof} Let $a$ be in $\N^{\De(1)}$ not supported by a cone. There exists a primitive class $c\in\gp$ such that the support of $c^+$ is contained in the support of $a$. Then $a-c^+\in \N^{\De(1)}$ and the leading form of $x^{a-c^+}\qsrpol^h_c\in\QSR^h$ is $x^a$. This, and Lemma \ref{lem:initial_term_of_R_d}, proves the two equalities on the right, and the
inclusion $\langle \mathrm{in}_\omega(R^h_c), c\in\gp\rangle\subset \mathrm{in}_\omega(\QSR^h)$.
It remains to show that $\mathrm{in}_\omega(\QSR^h)\subset \langle \mathrm{in}_\omega(R^h_c), c\in\gp\rangle$.

Let $P$ be in $\QSR^h$. Using Lemma \ref{prop:groebner_basis_over_field}, we can write~:
$$
P=\sum_{c\in\gp}Q_c\qsrpol^h_c
$$
where, for any $c\in\gp$, $Q_c\in F[x_\rho,h]$ ($F=\text{Frac}(\La)$) and $\Lm(Q_c\qsrpol^h_c)\preceq \Lm(P)$~; this implies   
$\omega(Q_c\qsrpol^h_c)\leq \omega(P)$. The initial form of $P$ is~:
$$
\mathrm{in}_\omega(P)=\sum_{c\in\gp, \omega(Q_c\qsrpol^h_c)=\omega(P)}\mathrm{in}_\omega(Q_c\qsrpol^h_c)=\sum_{c\in\gp, \omega(Q_c\qsrpol^h_c)=\omega(P)}\mathrm{in}_\omega(Q_c)x^{c^+}.
$$ 
it follows that $\mathrm{in}_\omega(P)$ is in $\La[x_\rho,h]\cap (\sum_{c\in\gp} F[x_\rho,h]x^{c^+})$ which is equal to 
$\sum_{c\in\gp} \La[x_\rho,h]x^{c^+}$ since the $x^{c^+}$ are monomials.
\end{proof}

The initial form of $R_c^h$ are unitary terms. This gives~: 

\begin{cor}\label{cor:initial_ideal_of_specialized_QSR} Let $p$ be any closed point of $\Spec \La$ and  $\kappa\simeq \C$ be its residual field. 
Let $\ov{\QSR}^h$ be the images of $\QSR^h$ in $\kappa[x_\rho,h]$.
The initial ideal of $\ov{\QSR}^h$ for the weight function $\omega$ is~:
$$
\mathrm{in}_\omega(\ov{\QSR}^h)=\langle x^{c^+}, c\in\gp\rangle =\langle \ x^a,\ a\text{ is not supported by a cone}\rangle.
$$
\end{cor}

Forgetting the variable $h$, we can restrict the weight function $\omega$ to $\La[x_\rho]$~; we still denote it by $\omega$. Proposition \ref{prop:initial_ideal_of_graded_QSR} above, and specialization to $h=1$ gives~:
\begin{cor} \label{cor:initial_ideal_of_QSR} The initial ideal of $\QSR$ for the weight function $\omega$ is~:
$$
\mathrm{in}_\omega(\QSR)=\langle \mathrm{in}_\omega(R_c), c\in\gp\rangle = \langle x^{c^+}, c\in\gp\rangle =\langle \ x^a,\ a\text{ is not supported by a cone}\rangle.
$$
\end{cor}

\subsubsection{Proof of Proposition \ref{prop:quotient_by_QSR}}

We first show that $\mathbf Q\ra \specmori$ is flat, then show, by Groebner degeneration, that each fiber is Cohen-Macaulay.

\noindent{\emph {Flatness.}} Let us prove that $\La[x_\rho]/\QSR$ is a free $\La$-module~:
for any $P$ in $\La[x_\rho]$, denote by $\ov P$ its image in $\La[x_\rho]/\QSR$.
Let $A$ be the set of monomials of $\La[x_\rho]$ not contained in $\mathrm{in}_\omega(\QSR)$. By Corollary \ref{cor:initial_ideal_of_QSR}
$A=\{x^a, a\in \N^{\De(1) } \mid a\text {\ is supported by a cone}\}$. We claim that $\ov A=\{\ov {x^a}, x^a\in A\}$
is a base of $\La[x_\rho]/\QSR$.

Let $x^{a_1},\ldots, x^{a_l}$ be in $A$ and $\al_1,\ldots, \al_l$ be
in $\La$. If $\sum_i \al_i \ov{x^{a_i}}=0$, then $\sum_i \al_i
x^{a_i}\in \QSR$ and $\mathrm{in}_\omega(\sum_i \al_i x^{a_i})\in \mathrm{in}_\omega(\QSR)$. Since every $a_i$ is supported by a cone,  $\al_i=0$ for any $i$, and $\ov A$ is free over $\La$.

Suppose now that $\ov A$ does not generate $\La[x_\rho]/\QSR$ as a $\La$-module. Let $x^a$
be the smallest monomial for $\preceq$ such that $\ov {x^a}\notin \La.\ov A$.  Then $a$ is not supported by a cone. There exists a primitive class $d$, and $b\in \N^{\De(1)}$ such that $a=b+d^+$ and
$x^a=x^b\qsrpol _d +Q^d x^{b+d^-}$. We deduce that $\ov {x^a}=Q^d \ov{
  x^{b+d^-}}$. Since $x^{b+d^-}\prec x^a$, the class $\ov
{x^{b+d^-}}$ belongs to $\La. \ov A$, hence $\ov {x^a}\in \La.\ov A$~; this is a contradiction.

\smallskip

{\emph {Cohen-Macaulayness and relative dimension.}}
Consider the graded ring $\La[x_\rho,h]$ and the ideal $\QSR^h$ defined in \ref{def:graded_batyrev_algebra}. Put $n'=n+k=\dim Y$. We first prove that $\La[x_\rho,h]/\QSR^h$ 
is Cohen-Macaulay of relative dimension $n'+1$ over $\La$.

Since $\specmori$ is a toric affine variety, it is a Cohen-Macaulay scheme. By \cite{Bruns-Herzog-CM}, Theorem 2.1.7, it is sufficient to show
that over every closed point $p$ of $\specmori$, the fiber $\mathbf Q_p$ of $\mathbf Q\ra\specmori$ 
is a Cohen-Macaulay scheme of dimension $n'+1$.

Let $p$ be a closed point of $\specmori$, $\kappa\simeq \C$ its residual field, and  denote by ${\ov\QSR}^h$ the image of $\QSR^h$ in $\kappa [x_\rho,h]$.
By Proposition \ref{prop:initial_ideal_of_graded_QSR}, the initial ideal of $\ov\QSR^h$ is the Stanley-Reisner ideal $\SR=
\langle x^a, a\in\N^{\De(1)}, \text{\ not supported by a cone} \rangle $ defined in  \ref{def:SR_ideal}. 
 Since $\ov \QSR^h$ is a graded ring, we can perform Gröbner degeneration, \ie construct a flat and proper family over $\A_1=\Spec\C[t]$ whose fibre over $0$ is $\proj( \C[x_\rho,h]/\SR)$ and whose fibre over any other point is $\proj( \C[x_\rho,h]/\ov{\QSR}^h)$. From \cite{Bruns-Herzog-CM}, Theorem 5.1.4, and  Corollary 5.4.6, we know that
  $\C[x_\rho]/\SR$ is a Cohen-Macaulay ring of dimension $n'$. Then $\C[x_\rho,h]/\SR$ and   $\C[x_\rho,h]/\ov{\QSR}^h$ both are Cohen-Macaulay rings of dimension $n'+1$.

Since $\QSR^h$ is homogeneous, the polynomial $h-1$ is not a zero divisor of $\La[x_\rho,h]/\QSR^h$. Then 
$\La[x_\rho]/\QSR=\La[x_\rho,h]/(\QSR^h,h-1)$ is a Cohen-Macaulay ring. \qed

\subsection{Proof of Theorem \ref{thm:Batyrev_locally_free}}
\label{subsection:Proof_of_thm_Batyrev_locally_free}

Put $\mathbf B=\Spec \bat$ and consider the scheme morphism $f: \mathbf B\ra \specmori$.
We first study the fiber of $f$ over $\0$. We then define a convenient neighbourhood $\freemori$ of $\0$ with help of a graded version of the Batyrev algebra and show that $\mathbf B\ra \specmori$ is finite, flat, of degree $\dim H^{2*}(X)$
over $\freemori$~; this prove the Theorem for the twisted Batyrev algebra. We finally prove the Theorem for the residual Batyrev algebra.

\subsubsection{Fibre of $\mathbf B \ra\specmori$ over $\0$.}

Recall the definition of the Stanley-Reisner ideal of $\De$ (\cf \ref{def:SR_ideal}).
By Proposition \ref{prop:cohomology_of_toric_varieties} we have~:
\begin{align}
\label{eq:fiber_of_B_over_0}
\bat \otimes (\La/\0) \isom  \C[x_\rho]/(\SR+\Lin)&\isom    H^{2*}(Y,\C)= H^{2*}(X,\C)\\
    {x}_\rho &\longmapsto  [D_\rho]\nonumber
\end{align}

\subsubsection{Definition of a neighbourhood of $\0$}

First define a graded version of the Batyrev algebra of $Y$~:

\begin{defn}\label{def:graded_batyrev_algebra} Assume that the canonical divisor $-K_Y$ is nef, and 
consider the graded $\La$-algebra $\La[x_\rho,h]$ and the graded quantum Stanley-Reisner ideal $\QSR^h $
defined in \ref{def:graded_QSR-ideal}.

The \textit{linear ideal} of $\La[x_\rho,h]$, is the homogeneous ideal $\Lin $ generated by the polynomials $Z_u=\sum_{\rho\in \De(1)} \langle u,v_\rho\rangle x_\rho$, $u\in M'$.
The \textit{graded Batyrev algebra} of $\De$ is the $\La$-algebra~:
$$
\bat^h:=\La[x_\rho,h]/(\QSR^h+\Lin).
$$
\end{defn}

 Put~:
\begin{itemize}
\item $\mathbf Q := \Spec(\La[x_\rho]/\QSR)$.
\item $\mathbf P := \proj(\La[x_\rho,h])$ and $\pi: \mathbf P \ra \specmori$, the natural projective morphism.
\item $\mathbf H\subset \mathbf P $, the relative hyperplane at infinity, defined by $h=0$.
\item $ {\mathbf B^h} := \proj(\La[x_\rho,h]/(\QSR^h+\Lin))$.
\end{itemize}
By definition, $\mathbf B = {\mathbf B^h}   \cap (\mathbf P\setminus \mathbf H) $.

\begin{lem}\label{lem:freeness_neighbourhood}
Set~:
$$\freemori:=\specmori \setminus \pi(\mathbf \bat^h \cap \mathbf H).$$ 
then $\freemori$ is an open Zariski neighbourhood of $\0$.
\end{lem}
\begin{proof}
Since $\pi$ is projective, $\freemori$ is an open subset of $\specmori$.
Let us show that $\0\in\freemori$.
The intersection $(\mathbf \bat^h \cap \mathbf H)\cap (\pi^{-1}(\0))$ is defined by the homogeneous ideal
$
\langle h\rangle+\SR+\Lin
$
in $\C[x_\rho,h]$. By Proposition \ref{prop:cohomology_of_toric_varieties}, $\C[x_\rho,h]/(\langle h\rangle+\SR+\Lin)\isom \C[x_\rho]/(\SR+\Lin)$
is isomorphic to $H^{2*}(Y,\C)$. Then the ideal $\langle h\rangle+\SR+\Lin$ defines a zero dimensional scheme supported by the origin of $\Spec \C[x_\rho,h]$~; its radical is the irrelevant ideal of the graded ring
$\C[x_\rho,h]$, and $(\mathbf \bat^h \cap \mathbf H)\cap (\pi^{-1}(\0))=\emptyset$.
\end{proof}

\subsubsection{Local freeness and rank of the twisted Batyrev algebra of $(X,\vb)$.}


Let $\mathbf B^h_{\scriptscriptstyle{\freemori}}$ be the pull-back of $\mathbf B^h $ by the open inclusion
$\freemori\hookrightarrow \specmori$~; we make use of the same notation for any other scheme defined over $\specmori$.

By Definition of $\freemori$, $\mathbf \bat^h_{\scriptscriptstyle{\freemori}}$
does not meet the relative hyperplane
$\mathbf H_{\scriptscriptstyle{\freemori}}$, hence $\mathbf \bat_{\scriptscriptstyle{\freemori}}=\mathbf \bat^h_{\scriptscriptstyle{\freemori}}$. Moreover, as a closed subscheme of the projective bundle $\mathbf P_{\scriptscriptstyle{\freemori}}$ which do not meet a relative hyperplane, $\mathbf  \bat_{\scriptscriptstyle{\freemori}}$ has relative dimension zero. Thus, $\mathbf \bat_{\scriptscriptstyle{\freemori}}\ra \freemori$ is a finite
and proper morphism.

By Proposition \ref{prop:quotient_by_QSR}. $\mathbf Q_{\scriptscriptstyle{\freemori}}\lra {\freemori}$ is a flat morphism of relative dimension $n'=\dim Y$ between Cohen-Macaulay schemes.  
One get the following diagram~:
$$
\xymatrix{
\mathbf B_{\scriptscriptstyle{\freemori}}\ar[rd]_{\stackrel{\text{\tiny rel.}}{\text{\tiny dim. $0$}}}\ar@{^{(}->}[r]&\mathbf Q_{\scriptscriptstyle{\freemori}}
\ar[d]^{{\text{\tiny rel. dim. $n'=\dim Y$}}}\\
& \freemori
}
$$

Let $(e_1,\ldots,e_{n'})$ be a basis of ${M'}=\Hom(N\oplus \Z^k,\Z)$.  
Let $p$ be a closed point of $\freemori$ and denote by $\ov Z_i$ the image of $Z_i:=Z_{e_i}$ in the quotient 
of $\La[x_\rho]$ by the maximal ideal defining $p$. In the Cohen-Macaulay fiber $\mathbf Q_p$ over $p$, the
scheme $\mathbf B_p$ has codimension $n'$ and is defined by a sequence
 of the same length $n'$ (namely $(\ov{Z}_1,\ldots,\ov{Z}_{n'})$). Then, by
\cite{Bruns-Herzog-CM}, theorem 2.1.2, $(\ov{Z}_1,\ldots,\ov{Z}_{n'})$
is a regular sequence.

Since $\mathbf Q_{\scriptscriptstyle{\freemori}}\ra {\freemori}$ is
flat, and $(\ov{Z}_1,\ldots,\ov{Z}_{n'})$ is a regular sequence over
any point of ${\freemori}$, the morphism $\mathbf B_{\scriptscriptstyle{\freemori}}\ra {\freemori}$ is flat (\cite{Matsumura-commutative} Theorem 22.5 and Corollary).
 The degree of this finite morphism can be computed as the length of the fibre $\mathbf B^h_{\0}$ over $\0$. From isomorphism \ref{eq:fiber_of_B_over_0}, it is equal to 
 $\dim H^{2*}(Y)$. \qed

\subsubsection{Local freeness and rank of the residual Batyrev algebra of $(X,\vb)$.}
\label{subsec:residual_Batyrev_algebra}

Denote by $\ov{x}_\top $ the image of $x_\top $ in $\bat=\La[x_\rho]/(\QSR +\Lin)$, and by $m_{\ov{x}_\top}~: \bat\lra\bat$
the morphism of multiplication by $\ov{x}_\top$ in $\bat$.
This multiplication induces an isomorphism~:
$$
\bat^{\res}=\La[x_\rho]/(\GKZcom:x_\top)\isom \ov{x}_\top\bat=\im(m_{\ov{x}_\top}),
$$
which gives an exact sequence~:
\begin{equation}
0\lra \bat^\res \lra \bat \lra \bat/\ov{x}_\top\bat \lra 0  \label{exact1}
\end{equation}

Let $d$ be a class of $\mori{Y}$, and $\rho_i$ be the ray of $\raysfiber$ corresponding to a line bundle $\lb_i$. Since $\lb_i$ is ample and the Chern class of $\lb_i$ is
$[-D_{\rho_i}]$, we have $d_{\rho_i}=D_{\rho_i}.d<0$. Then, we have~:

\begin{align}\label{eq:Rd,xctop}
  \qsrpol _d=x^{d^+}-Q^dx_\top x^{d^--\eps},
\end{align}

 where $\eps=(\eps_\rho)_{\rho\in\De(1)}, \eps_\rho=1$ if $\rho\in\raysfiber$, 
$\eps_\rho=0$ if $\rho\in\raysbase$. 

As a consequence, the image of $\qsrpol_d$ in $\bat/\ov{x}_\top\bat=\La[x_\rho]/(\QSR +\Lin+\langle x_\top \rangle )$, is $x^{d^+}$ and we have~:
\begin{align*}
\bat/\ov{x}_\top\bat&\isom \La[x_\rho]/(\langle x^{d^+}, d\in \mori{Y}\rangle + \langle Z_u, u\in M'\rangle  + \langle x_\top \rangle )\\
&\isom \La\otimes 
\biggl(\C[x_\rho]/(\SR+\Lin + \langle x_\top \rangle)
\biggr)
\end{align*}
Using Proposition \ref{prop:cohomology_of_toric_varieties}, we get~:
$$
\La[x_\rho]/(\QSR +\Lin+\langle x_\top \rangle )\isom
\La\otimes \bigl(H^{2*}(X,\C)/\< c_\top \>\bigr).
$$
Thus, $\bat/{\ov{x}_\top}\bat$ is a free $\La$-module of rank 
$\dim_\C H^{2*}(Y)\, /\, c_\top H^{2*}(Y)=\dim_\C \ker \mc.$

Restricting the exact sequence \eqref{exact1}  to ${\freemori}$, and using Theorem \ref{thm:Batyrev_locally_free}, we find that
$(\bat^{\res})|_{{\freemori}}$ is a locally free $\La$-module of rank ($\dim H^{2*}(Y)-\dim \ker \mc$) over ${\freemori}$.  \qed

\section{GKZ modules for toric varieties with a split vector bundle} 
\label{sec:GKZ_modules}

GKZ systems were defined and studied by
Gelfand-Kapranov-Zelevinski{\u\i} in the end of the eighties (\cf
\cite{GKZ-holonomic-system-1987},
\cite{GKZ-Equations-hypergeometric-1988},
\cite{GKZ-hypergeo-toric-1990} and
\cite{GKZ-Generalized-Euler-integral-1990}).  Our
approach is closer to the one of \cite{Givental-HMS-1995}, \cite{Givental-1998-Mirror-complete-intersection},
\cite[\S 5.5.3 and \S 11.2]{Cox-Katz-Mirror-Symmetry} or
\cite{Iritani-2009-Integral-structure-QH}.

\subsection{Definition and main Theorem for GKZ-modules}
\label{subsec:GKZ}

For Batyrev algebras, the natural base ring is $\La=\C[\mori{X}]= \C[\mori{Y}]$ as defined in Notation
\ref{notn:La,Pi,specmori,spectoric} or \ref{notn:anneau_Lambda}. 
When dealing with differential operators, we need to work over 
a smooth subvariety of $\specmori=\Spec\La$.


Put, as in \ref{def:batyrev_algebra}, $\La=\C[Q^d,d\in \mori{Y}]$.
Consider the ring $\C[Q^d, d\in H_2(Y,\Z)]$, which is the localization of $\La$ where the $Q^d$ are made invertible.
Let $(B_1,\ldots, B_r)$ be the fixed base of $H_2(X,\Z)$ and  $(T_1,\ldots, T_r)$ be its dual base in $H^2(X,\Z)$
(\cf \ref{notn:base_of_cohomology}). Put $q_i=Q^{B_i}$. 
\begin{notn}\label{notn:toric_locus}Set~: 
\begin{align*}
\C[q_a^\pm]&:=\C[q_{1}^{\pm 1}, \ldots ,q_{r}^{\pm 1}]=\C[Q^d, d\in H_2(Y,\Z)],\\
\spectoric &:=\Spec \C[q_a^\pm]\qquad \freetoric:=\freemori\cap \spectoric
\end{align*}
where $\freemori$ is the neighbourhood of $\0$ defined in Lemma \ref{lem:freeness_neighbourhood}~; $\freemori$ is 
 the locus over which the Batyrev algebra is ensured to be locally free, and $\freetoric$ will play the same role for differential modules. We have~:
$$
\begin{array}{cccccl}
\0 & \in & \freemori & \subset  & \specmori &\\
   &     & \cup       &         &  \cup &\\
&     &  \freetoric &  \subset & \spectoric.& 
\end{array}
\qquad \text{ and $\0\notin \spectoric$.}
$$
\end{notn}
For any $d=\sum_{a=1}^r d_aB_a\in H_2(X,\Z)$ we write~:
$$
q^d:=\prod_{a=1}^rq_a^{d_a}\in \C[q_a^\pm].
$$

Let $z$ be another variable~;  we write $\C_z$ for $\Spec \C[z]$, or $\C$ when no confusion can occur. 
Consider the non-commutative ring~:
    \begin{align}\label{def:differential_ring_D}
      \D:=\cc[q_{1}^{\pm 1}, \ldots ,q_{r}^{\pm 1},z]\langle z\delta_{q_{1}}, \ldots ,z\delta_{q_{r}},z\delta_{z}\rangle=\cc[q_a^\pm,z]\langle z\delta_q,z\delta_z\rangle,
    \end{align}
    where the non commutative relations are 
    $(z\de_{q_i}) q_i=q_i(z\de_{q_i})+zq_i$ and $(z\de_z) z=z(z\de_z)+z^2$.
\begin{notn} 1. \emph{Quantisation:}\label{notn:quantization}
To any class $\tau=\sum_{a=1}^{r}t_{a}T_{a}\in H^2(X)$ we associate the operator 
$$\widehat{\tau}:= \sum_{a=1}^{r}t_{a}z\delta_{q_a}\in \D$$
If $\lb$ is a line bundle or a divisor on $X$ we also write
$\widehat{\lb}:=\widehat{c_1(\lb)}.$ 
Finally put~:
$$
\hatctop:=\prod_{i=1}^k \widehat \lb_i \in \D.
$$

2. \emph{Pochhammer symbol with a variable $z$:} For any element $a$ of a $\Z[z]$-algebra, and any $k\in \N$ define~:
\begin{align}\label{def:Pochhammer}
[a]_0=1, \qquad [a]_k:=a(a-z)\cdots (a-(k-1)z)\text{\quad if $k>0$}.
\end{align}
This is a variant of the traditional Pochhammer symbol.
\end{notn}

\begin{defn}\label{def:idealGKZ}
For any $d\in H_2(X,\Z)$ put~:
\begin{align*}
\quad \square_d &:= 
\prod_{i=1}^k \left[\widehat{\tordiv_i}+zd_{i}^- \right]_{d_i^-} \prod_{\theta\in\Sigma(1)}\left[\widehat{D}_{\theta}\right]_{d_\theta^+}-
q^d\prod_{i=1}^k \left[\widehat{\tordiv_i}+zd_{i}^+ \right]_{d_i^+} \prod_{\theta\in\Sigma(1)}\left[\widehat{D}_{\theta}\right]_{d_\theta^-}.
\end{align*}
where $d_i=\tordiv_i.d$, $d_\theta=D_\theta.d$.
Also define the Euler field~:
\begin{align*}
\eulerfield&:=z\delta_z+\widehat{c_{1}(\tang_{X})}+\widehat{c_1( \vb^{\dual})}.
\end{align*}
  \begin{enumerate}
  \item  The \emph{GKZ-ideal $\GKZid$} of $\D$ associated to $(\Sigma,\tordiv_1,\ldots,\tordiv_k)$ is the left ideal generated by the operators
    $\square_d$ and $\eulerfield$~:
    $$\GKZid:=\langle\eulerfield, \square_d, d\in H_2(X,\Z)\rangle$$
\item The \emph{quotient ideal $(\GKZid:\hatctop)$ of $\GKZid$ with respect to $\hatctop$}, is the left ideal of $\D$ generated by~:
$\{P\in \D \mid  \hatctop P \in \G\}.  $
$$
(\GKZid:\hatctop):=\langle P\in \D \mid  \hatctop P \in \G\rangle.
$$
\end{enumerate}
\end{defn}

\begin{rem} 1. The set $\{P\in \D \mid  \hatctop P \in \G\}$ contains the ideal $\G$~; however, 
unlike the commutative case, it is not an ideal of $\D$ in general,
but only a $\C[z]$-module (as an example, in
$\C[q]\<\de_q\>$, fix $\I=\langle \de_q\rangle$~; then $q\in(\I:\de_q)$
but $\de_q q\notin\I$).

2. If $\rho\in\raysfiber$ corresponds to a divisor $\tordiv_i$, we have $[-D_\rho]=[\tordiv_i]$. This enables us to write~:
$$
\quad \square_d=\hspace{-0.1cm}\prod_{\rho\in\raysfiber}\hspace{-0.1cm}\left[-\widehat{D_\rho}+z d_\rho^+\right]_{d_\rho^+}\hspace{-0.1cm}\prod_{\rho\in\raysbase}\hspace{-0.1cm}\left[\widehat{D_\rho}\right]_{d_\rho^+}
-q^d\hspace{-0.1cm}\prod_{\rho\in\raysfiber}\hspace{-0.1cm}\left[-\widehat{D_\rho}+zd_\rho^-\right]_{d_\rho^-}\hspace{-0.1cm}\prod_{\rho\in\raysbase}\hspace{-0.1cm}\left[\widehat{D_\rho}\right]_{d_\rho^-}
$$
Note that, in this writing, the sign in
front of $\widehat{D_\rho}$ differs for rays coming from the base $X$ or
from the line bundles $\lb_i$.  We follow here the conventions of  \cite{Cox-Katz-Mirror-Symmetry}, taking account of
 their  "Erratum to 
Proposition 5.5.4".

\end{rem}
\begin{defn}\label{def:GKZ} 
	Let $\D=\cc[q_a^\pm,z]\langle z\delta_q,z\delta_z\rangle$ be the non commutative ring defined above.
	Let $\gd$ be the corresponding sheaf of $\go_{\spectoric\times\C}$-algebras.
  \begin{enumerate}
  \item The \emph{twisted GKZ module} associated to $(\Sigma,\tordiv_1,\ldots,\tordiv_k)$ is the left $\D$-module  
  $$\GKZmod:=\D/\G,$$
  the corresponding sheaf of $\gd$-modules is denoted by ${\GKZmodsheaf}$.
  \item The \emph{residual GKZ module} ${\GKZmod^\res}$ is the left $\D$-module 
   $${\GKZmod^\res}:=\D/(\GKZid:\hatctop),$$
    the corresponding sheaf of $\gd$-modules is denoted by ${\GKZmodsheaf}^\res$.
\end{enumerate}
\end{defn}

\begin{rem}Up to isomorphism, $\GKZmod$ and $\GKZmod^\res$ does not depend on the specific choices of
the fan $\Sigma$ and toric divisors $\tordiv_i$. 

Indeed, the GKZ system is defined from the
following exact sequence
\begin{displaymath}
  \xymatrix{0\ar[r]&H_{2}(X,\mathbb{Z})\ar[r]&\mathbb{Z}^{\#\Delta(1)}\ar[r]^{\beta}&N\times
    \mathbb{Z}^{k}\ar[r]&0} 
\end{displaymath}
where $\beta(e_{\theta})=w_{\theta}$.
If one use an other fan, than we have an isomorphic exact sequence which gives an isomorphic GKZ
system. Notice that this exact sequence just depends on the rays of $\Delta$ and not on the higher
dimensional cone of $\Delta$.
\end{rem}

\begin{rem}\label{rem:alternative_definitions_of_GKZ}
We will need alternative definitions of the $\mathrm{GKZ}$ modules~:

(1) \emph{Removing $z\de_z$~:} Put $\D':=\cc[q_a^\pm,z]\langle z\delta_{q}\rangle$ and ${\GKZid}'=\langle
  \square_d , \ d\in H_2(X,\Z)\rangle\subset \D'$. The Euler operator
  $\eulerfield$ of the ideal $\GKZid$ enables us to remove
  $z\delta_{z}$ in the quotient, which gives two isomorphisms of $\cc[q_a^\pm,z]$-module~:
   \begin{align}\label{eq:1}
    \GKZmod \isom \D'/{\GKZid}' &&     \GKZmod^{\res} \isom \D'/(\GKZid':\hatctop) 
  \end{align}

(2) \emph{Differential operators indexed by rays of $\De$~:} For any $\rho\in\De(1)$ write, in a unique way~:
\begin{align}\label{notn:coordinates_of_D_rho}
[D_\rho]&=\sum_{a=1}^r D_\rho^a T_a\in H^2(Y,\Z).
\end{align}

Consider a set of indeterminates $\la_\rho, \rho\in\De(1)$.
Put $\D'':=  \C[q_a^\pm,z]\langle z\de_{\la_\rho}, \rho\in \De(1)\rangle,$
where the non commutative relations are~: 
$ z\de_{\la_\rho}.q_a=q_a.z\de_{\la_\rho}+D_\rho^a zq_a$.
For any $d\in H_2(X,\Z)$, put~:
\begin{align}\label{eq:square,double,prime}
\sq''_d&:=\hspace{-0.1cm}\prod_{\rho\in\raysfiber}\hspace{-0.1cm}[-z\de_{\la_\rho}+zd_\rho^+]_{d_\rho^+}\hspace{-0.1cm}\prod_{\rho\in\raysbase}\hspace{-0.1cm}[z\de_{\la_\rho}]_{d_\rho^+}
 -q^d\hspace{-0.1cm}\prod_{\rho\in\raysfiber}\hspace{-0.1cm}[-z\de_{\la_\rho}+zd_\rho^-]_{d_\rho^-}\hspace{-0.1cm}\prod_{\rho\in\raysbase}\hspace{-0.1cm}[z\de_{\la_\rho}]_{d_\rho^-},\\
 \gz''_u&:=\sum_{\rho\in \Delta(1)} \langle u,v_\rho\rangle z\de_{\la_\rho}, u\in M' \label{eq:operator,Z,double,prime}.
\end{align}
 Put $\GKZid'':=\langle \sq''_d, \gz''_{u}\rangle$.
Then there is an isomorphism of $\C[q_a^\pm,z]$-modules~:
\begin{align}\label{eq:other,presentation,GKZ,mod}
f~:~\D''/\GKZid'' &\lra \D'/\GKZid'\simeq \GKZmod\\ 
z\de_ {\la_\rho}&\longmapsto  \sum_{a=1}^r D_\rho^a  z\de_{q_a}\nonumber
\end{align}
The previous isomorphism $f$ induces an isomorphism $f^{\res}$ between the residual's modules, that is 
\begin{displaymath}
  \xymatrix{\D''/\GKZid''\ar[r]^{f}_{\sim} \ar[d]& \D'/\GKZid'\simeq \GKZmod \ar[d] \\
\D''/\left(\GKZid'': \prod_{\rho\in \raysfiber} -z\de_{\la_{\rho}}\right)\ar[r]^-{f^{\res}}_-{\sim} &
 \D'/(\GKZid':\hatctop)  \simeq \GKZmod^{\res}
}
\end{displaymath}
\end{rem}

The main property of GKZ sheaves of $\gd$-modules is given by~:

\begin{thm}\label{thm:GKZ_locally,free} 
Let $X$ be a toric smooth projective variety endowed with a split vector bundle $\vb=\oplus_{i=1}^k\lb_i$. 
Assume that $\omega_X\otimes {\lb_1}^{\dual}\otimes \cdots \otimes {\lb_k}^{\dual}$ and each line bundle $\lb_i$ is nef. 
Let $c_\top$ be the top chern class of $\vb$ and let
 $\mc$ be the morphism of multiplication  by $\ctop$ in $H^ {2*}(X)$.

Let  $\GKZmodsheaf$ and $\GKZmodsheaf^\res$ be the twisted and residual GKZ sheaf  of $\gd$-modules associated to $(X,\vb)$, as defined in \ref{def:GKZ}.
Let $\freetoric$ be the open subset of $\spectoric$ defined in \ref{notn:toric_locus}. We have~:
  \begin{enumerate} 
  \item Over $\freetoric\times\C$, $\GKZmodsheaf$ is a 
  locally free $\go_{\freetoric\times\C}$-modules of rank $\dim
  H^{2*}(X)$.
  \item Over $\freetoric\times\C$, $\GKZmodsheaf^\res$ is a 
  locally free $\go_{\freetoric\times\C}$-modules of rank $(\dim
  H^{2*}(X)-\dim \ker \mc)$.
  \end{enumerate}
\end{thm}

\begin{proof}
This theorem follows from Proposition \ref{prop:coherence_of_GKZ_sheaves} ($\GKZmodsheaf_{|\freetoric\times \C}$ and $\GKZmodsheaf_{|\freetoric\times \C}^\res$ are coherent), Proposition \ref{prop:freeness_of_M} ($\GKZmodsheaf_{|\freetoric\times \C}$ is locally free of the expected rank) and 
Proposition \ref{prop:Res,locally,free} below.
In this last proposition we only prove that $\GKZmodsheaf^\res_{|\freetoric\times \C}$ is locally free over $ \freetoric\times \C^*$ (that is on $z\neq 0$) and   isomorphic to the residual Batyrev algebra on $z=0$. By Nakayama's Lemma, this only gives an inequality on the dimension
of $\GKZmodsheaf^\res_{|\freetoric\times \C^*}$.

We are left to show that $\GKZmodsheaf^\res_{|\freetoric\times \C^*}$ has the expected rank over $z\neq 0$. 
This point follows from Mirror symmetry and will be proved in section
\ref{sec:isomorphism-theorems} (\cf Remark \ref{rem:missing_point-for-freeness_of_GKZres}). 
\end{proof}

\subsection{Coherence of GKZ sheaves associated to $(X,\vb)$}

\begin{prop}\label{prop:coherence_of_GKZ_sheaves}
Under assumptions of Theorem \ref{thm:GKZ_locally,free},
$\GKZmodsheaf_{|\freetoric\times \C}$ and $\GKZmodsheaf_{|\freetoric\times \C}^\res$ are coherent sheaves of $\go_{\freetoric\times \C}$-modules.
\end{prop}

\begin{proof}
  If ${\GKZmodsheaf}$ is coherent then the surjective
  morphism ${\GKZmodsheaf}\to {\GKZmodsheaf^\res}$ implies that ${\GKZmodsheaf^\res}$ is finitely generated.
Hence, it is sufficient to  show that ${\GKZmodsheaf}$ is coherent over $\freetoric\times \C$.
  
  A usual proof of coherence for a differential module,
  consists in finding a good filtration and proving that the characteristic
  variety is supported by the zero section of the cotangent bundle
  (\cf \cite[\S 3]{Sevenheck-GKZ-log-Frob-manifold} and
  \cite[Proposition 1.2.8]{Sabbah-Asterisque}).  In our case, $\GKZmod$ is not a $\cc[q_a^\pm,z]\langle
  \partial_{q},\partial_{z}\rangle$-module but only a $\C[q_a^\pm,z]\langle z\de_q\rangle$-module~; we must adapt the
  classical proof (\cf for instance \cite[Proposition
  2.2.5]{Hotta-D-module})~:
  
  We make use of notations of Remark \ref{rem:alternative_definitions_of_GKZ}.(1) and put 
 $\GKZmod':=\D'/{\GKZid}'$.
  Let us define the following increasing filtration of $\D'$:
\begin{align*}
  F_{k}\D'&:=\biggl\{P\in \D'\mid P(q,z,z\delta_{q})=\sum_{\stackrel{\alpha\in \nn^{r}}{|\alpha|\leq k}}P_{\alpha}(q,z)(z\delta_{q})^{\alpha}\biggr\}
\end{align*}
where
$(z\delta_{q})^{\alpha}:=(z\delta_{q_{1}})^{\alpha_{1}}\cdots(z\delta_{q_{r}})^{\alpha_{r}}$.
Let $\Gr \D'$ be the graduated ring of $\D'$ defined by this filtration.  
Denote by $y_{a}$ the class of
$z\delta_{q_{a}}$ in $\Gr \D'$, then $\Gr \D'$ is isomorphic to
$\cc[q_a^\pm,z][y_{1}, \ldots ,y_{r}]$.  
We define the \emph{symbol} of an element $P=\sum_{\alpha\in
  \nn^{r}}P_{\alpha}(q,z)(z\delta_{q})^{\alpha}$ of $\D'$ by~:
\begin{align*}
\sigma(P)=\sum_{\stackrel{\alpha\in \nn^{r}}{|\alpha|=\deg P}}P_{\alpha}(q,z)y^{\alpha}.
\end{align*}
We also define an increasing filtration on $\GKZmod'$  by
\begin{align*}
  F_{k}\GKZmod':=F_{k}\D'/{\GKZid}'_{k},
    &\text{\quad where \quad}{\GKZid}'_{k}:=F_{k}\D'\cap {\GKZid}'.
\end{align*}
One can check that $(F_{k}\GKZmod')_{k\geq 0}$
satisfies the properties of a good filtration~; in particular,
for any $k$ in $\nn$, 
  $F_{k}\GKZmod'$ is a coherent $\cc[q_a^\pm,z]$-module.
We have 
$
  \Gr \GKZmod'=\Gr \D'/\Gr {\GKZid}', 
$
which shows that the annihilator ideal of $ \Gr \GKZmod'$ in $\Gr\D'$ is $\Gr {\GKZid}'$.
Recall that the characteristic variety of $\GKZmod'$ is the subscheme of $\Spec \Gr\D'$ defined by the radical of the annihilator of $\Gr \GKZmod'$.
Put $\mathbf A^r_{\spectoric\times \C}=\Spec \Gr \D'=\Spec \cc[q_a^\pm,z][y_i]$~; denote by $\mathbf C\subset \mathbf A^r_{\spectoric\times \C}$ the characteristic variety of  $\GKZmod'$ defined by the ideal $\sqrt{\mathrm{Ann}\Gr \GKZmod'}$. 
Let $\freetoric$ be the open subset of $\spectoric$ defined in Notations \ref{notn:toric_locus} and 
 $\mathbf C_{\freetoric\times \C}\subset \mathbf A^r_{\freetoric\times \C}$ be the pull-back of $\mathbf C$ by the open immersion ${\freetoric\times \C}\hookrightarrow {\spectoric\times \C}$. 

\begin{lem}\label{lem:caract,section,nulle}
The characteristic variety $\mathbf C_{\freetoric\times \C}$
is the image of the zero section of the trivial bundle
$\mathbf  A^r_{\freetoric\times \C}\ra \freetoric\times \C$. It
is defined by the ideal $\langle y_1,\ldots, y_r\rangle$.
\end{lem}

 \begin {proof} By definition of the symbol, 
the characteristic variety is contained in the closed subscheme of $\mathbf A^r_{\spectoric\times \C}$  defined by the ideal
$$
 J=\langle \sigma(\square_d),d\in H_2(X,\Z)\rangle\subset \cc[q_a^\pm,z][y_1,\ldots,y_r].
 $$
Consider the Batyrev $\La$-algebra $\bat$ defined in \ref{def:graded_batyrev_algebra}. After localisation of $\La$ and tensorization by $\C[z]$, one get
a $\C[q_a^\pm,z]$-graded algebra. 
 There is a natural surjective morphism~:
 \begin{align*}
\al: \C[q_a^\pm,z][ x_\rho,h]&\lra \Gr \D'=\cc[q_a^\pm,z][y_1,\ldots,y_r]\\
h &\longmapsto 0\\
x_\rho &\longmapsto 
\begin{cases}
\sum_{a=1}^r D_\rho^a y_a & \text{if\ $\rho\in\raysbase$}\\
- (\sum_{a=1}^r D_\rho^a y_a) & \text{if\ $\rho\in\raysfiber$}\\
\end{cases}
\end{align*}where the integers $D_\rho^a$ are defined by~: $[D_\rho]=\sum_{a=1}^r
D_\rho^a T_a$ (cf. \ref{notn:coordinates_of_D_rho}). 
One check that, taking the quotients, 
the morphism $\al$ gives an isomorphism~:
$$
\C[q_a^\pm,z][ x_\rho,h]/(\QSR^h+\Lin+\langle h\rangle) \simeq \cc[q_a^\pm,z][y_1,\ldots,y_r]/J.
$$
Let $p$ be a closed point of $\freetoric$, and $\kappa\simeq \C$ its residual field. Let $\ov{\QSR}^h$ be the
image of $\QSR$ in $\kappa [x_\rho,h]$, and $\ov J$ be the image of $J$ in $\kappa[z][y_1,\ldots,y_r]$. 
By definition of  $\freemori$ (Lemma \ref{lem:freeness_neighbourhood}), the radical of $(\ov \QSR^h+\ov \Lin+\langle h\rangle)$
is the ''irrelevant'' ideal $\langle h, x_\rho, \rho\in\De (1)\rangle$. 
This shows that the radical of $\ov J$ is equal to $\langle \al(x_\rho), \rho\in\De (1)\rangle= \langle y_1,\ldots, y_r\rangle$. 
\end{proof}
Denote by $\gd'$ and $\GKZidsheaf'$ the sheaves associated to $\D'$ and $\GKZid'$.
Consider the sheaf of ideals $\mathcal I$ in $\Gr \mathcal{D}'$, generated by $\{y_1,\ldots, y_r\}$.
By Lemma  \ref{lem:caract,section,nulle} above, there exists $m_0\in \N$ such that $\mathcal I^{m_0}_{|\freetoric\times\C}  \subset \Gr {{\GKZidsheaf}}'_{|\freetoric\times\C}$  (one may take
$m_0=m_{1}+\cdots+m_{r}$ where $y_{i}^{m_{i}}\in \Gr
  {{\GKZidsheaf}}'_{|\freetoric\times\C}$).  We have~:
  $$
  F_{m_{0}+k}{\GKZmodsheaf}'_{|\freetoric\times\C}=F_{m_{0}}\mathcal{D}'_{|\freetoric\times\C}\cdot F_{k}\GKZmodsheaf'_{|\freetoric\times\C}
  $$
which shows that the increasing filtration $F_{k}{\GKZmodsheaf}'_{|\freetoric\times\C}$
is stationary after $m_{0}$. But we know that  $F_{k}\GKZmodsheaf'$ is a coherent $\go_{\spectoric\times \C}$-module. 
\end{proof}

\subsection{Local freeness and rank of the twisted GKZ sheaf associated to $(X,\vb)$}

\begin{prop} \label{prop:freeness_of_M} Under assumptions of Theorem \ref{thm:GKZ_locally,free},
the $\go_{\freetoric\times\C}$-module
  $\GKZmodsheaf_{|\freetoric\times\C}$ is locally free of rank $\dim
  H^{2*}(X)$.
\end{prop}

\begin{proof}
  The following proof is inspired from Theorem 2.14 of
  \cite{Sevenheck-GKZ-log-Frob-manifold}, with modifications taking into account 
the twisting by $\vb$ and the use of $q_a$ variables instead of $(\la_\rho)_{\rho\in\De(1)}$.

\noindent \textbf{Step 1.} \emph{$\GKZmodsheaf/z\GKZmodsheaf$ is locally free of rank $H^{2*}(X)$.}

Let $\bat$ be the Batyrev algebra $\La[x_\rho]/\<\QSR+\Lin\>$ defined in \ref{def:batyrev_algebra}. 
Localizing $\La$ by inverting $Q^d$ ($d\neq 0$) gives  $\C[q_a^\pm]$. There is an isomorphism of $\C[q_a^\pm]$-algebra~:
\begin{align}\label{eq:map:batyrev->GKZ_0}
B\otimes \C[q_a^\pm]\simeq  \C[q_a^\pm,x_\rho]/\! \<\QSR+\Lin\>&\lra \GKZmod/z\GKZmod=\D'/(\<z\>+\GKZid') \\
  x_\rho & \longmapsto
\begin{cases}
\sum_{a=1}^r D_\rho^a z\de_{q_a}& \mbox{ if } \rho\in \raysbase \\
-\sum_{a=1}^r D_\rho^a z\de_{q_a}& \mbox{ if }\rho\in \raysfiber
\end{cases}\nonumber
\end{align}
By Theorem \ref{thm:Batyrev_locally_free}, $B$ is locally free of rank $\dim
H^{2*}(X)$ over $\freemori$~; then $\GKZmodsheaf/z\GKZmodsheaf$ is locally free of rank $\dim
H^{2*}(X)$ over $\freetoric=\freemori\cap \spectoric$.

\noindent \textbf{Step 2.} \emph{$\GKZmodsheaf$ is locally free over $\freetoric\times \C^*$.}

By Proposition \ref{prop:coherence_of_GKZ_sheaves}, $\GKZmodsheaf|_{\freetoric\times\C}$ is a coherent
$\go_{\freetoric\times\C}$-modules. If $z$ is invertible, Theorem 1.4.10 of \cite{Hotta-D-module} shows that
the coherent sheaf $\GKZmodsheaf$ is actually locally free. 

\noindent \textbf{Step 3.} \emph{Up to a pull-back, $\GKZmod$ is a  GKZ-module studied in Adolphson's article \cite{adolphson_hypergeometric}.}

Let $\{\la_\rho, \rho\in\De(1)\}$, be a set of indeterminates. Put
$\D^{{1}}=\C[\la_\rho^{\pm }]\langle \partial_{\la_\rho}\rangle$, with the usual relations $\partial_{\la_{\rho}}.\la_\rho=\la_\rho. \partial_{\la_\rho}+1$.
For any $d\in H_2(X,\Z)$, put $\square^{{1}}_d=\partial_\la^{d^+}-\partial_\la^{d^-}$. Consider the vector
$\beta=(0_N,-1,\ldots, -1)\in N\times \Z^k$ and for any $u\in M'$ put $\gz^{{1}}_u=\sum_\rho \langle u,v_\rho\rangle \la_\rho\partial_{\la_\rho}-\langle u,\beta\rangle$. Then the $\D^{{1}}$-module $\D^{{1}}/\langle \square^{{1}}_d,\gz^{{1}}_u\rangle$
is studied in  \cite{adolphson_hypergeometric}.

Let $\varphi$ be the injective morphism~:
\begin{align}\label{eq:morphisme_phi_de_q_aux_lambda}
  \varphi:\C[q_a^\pm]&\longhookrightarrow \C[\lambda^{\pm}]\\
q_{a}&\longmapsto \prod_{\rho\in\raysfiber}(-\la_\rho)^{D_\rho^a}\prod_{\rho\in\raysbase}\la_\rho^{D_\rho^a},
\end{align}
where the $D_\rho^a$ are defined in Remark \ref{rem:alternative_definitions_of_GKZ}.(2).
viewing $\C[\la^{\pm}]$ as a $\C[q_a^\pm]$-algebra, we claim that there exist an isomorphism~: 
\begin{align}\label{eq:isomorphisme_Adolphson-GKZ}
\GKZmod \otimes_{\C[q_a^\pm]}\C[\la_\rho^\pm]  \isom \D^{{1}}/\langle \square^{{1}}_d,\gz^{{1}}_u\rangle \otimes_\C \C[z^\pm]
\end{align}

To construct this isomorphism, put $\D^{{2}}=\C[\la_\rho^{\pm},z]\langle z\partial_{\la_\rho}\rangle$, with the relations $z\partial_{\la_{\rho}}.\la_\rho=\la_\rho. z\partial_{\la_\rho}+z$.
For any $d\in H_2(X,\Z)$, put $\square^{{2}}_d=(z\partial_\la)^{d^+}-(z\partial_\la)^{d^-}$. Consider as above the vector
$\beta=(0_N,-1,\ldots, -1)\in N\times \Z^k$ and for any $u\in M'$ put $\gz^{{2}}_u=\sum_\rho \langle u,v_\rho\rangle \la_\rho z\partial_{\la_\rho}-\langle u,\beta\rangle z$.
Sending $\la_\rho$ to $z\la_\rho$, $z\partial_{\la_\rho}$ to $\partial_{\la_\rho}$ and $z$ to $z$, one get an isomorphism~:
$$
\D^{{2}}/\langle \square^{{2}}_d,\gz^{{2}}_u\rangle \isom \D^{{1}}/\langle \square^{{1}}_d,\gz^{{1}}_u\rangle \otimes_\C \C[z^\pm].
$$

Then consider as in remark \ref{rem:alternative_definitions_of_GKZ}.(2) the module $\D''/\langle \square''_d,Z''_u\rangle$, isomorphic to $\GKZmod$.
Put, in $\D^{{2}}$, $\ell:=\prod_{\rho\in\raysfiber}\la_\rho$.
There is an injective morphism of non commutative $\C[z^\pm]$-algebras~:
\begin{align*}
\D''& \longhookrightarrow \D^{{2}} \\ 
z\de_{\la_\rho}&\longmapsto \ell^{-1}(\la_\rho .z\partial_{\la_\rho})\ell
=
\begin{cases}
\la_\rho .z\partial_{\la_\rho} &\text{if $\rho\in\raysbase$}\\
\la_\rho .z\partial_{\la_\rho}+z &\text{if $\rho\in\raysfiber$}
\end{cases}
\\
q_a&\longmapsto  \prod_{\rho\in\raysfiber}(-\la_\rho)^{D_\rho^a}\prod_{\rho\in\raysbase}\la_\rho^{D_\rho^a},
\end{align*}
 which gives~:
\begin{align*}
\GKZmod\otimes_{\C[q_a^\pm]}\C[\la_\rho^\pm]\simeq \D''/\langle \square_d'',\gz''_u\rangle\otimes_{\C[q_a^\pm]}\C[\la_\rho^\pm]& \isom \D^{{2}}/\langle \square^{{2}}_d,\gz^{{2}}_u\rangle.
\end{align*}

\noindent \textbf{Step 4.} \emph{The rank of $\GKZmodsheaf$ over $\freetoric\times \C^* $ is $\dim H^{2*}(X,\C)$.}

The morphism $\varphi$ defined in \ref{eq:morphisme_phi_de_q_aux_lambda} is injective~; this gives a surjective morphism 
$h: \Spec \C[\la_\rho^\pm]\ra \spectoric=\Spec \C[q_a^\pm]$, and $\mathbf O=h^{-1}(\freetoric)$
is a dense  open  subset  of the irreducible smooth variety $\Spec \C[\la_\rho^\pm]$.

The isomorphism \ref{eq:isomorphisme_Adolphson-GKZ} ensures that, over $\mathbf O$, the differential module
$\D^{{1}}/\langle \square^{{1}}_d,\gz^{{1}}_u\rangle$ is locally free of rank equals to the generic rank of $\GKZmodsheaf$. Moreover, by Corollary 5.11 of \cite{adolphson_hypergeometric}
the  rank  of  $\D^{{1}}/\langle \square^{{1}}_d,\gz^{{1}}_u\rangle$ is $(n+k)!\text{Vol}(\Gamma_{\Delta})$ where
$\Gamma_{\Delta}$ is the convex hull of the points
$\{0,v_\rho,\rho\in\De(1)\}$  in $N'_\R$.

Since all the $\tordiv_i$ are  nef, the fan $\De$ is convex, and $0$ is not in the interior of this convex
hull. Since the divisor $-K_X-\sum_{i=1}^kL_i$ is nef, the vectors
$(v_1,\ldots, v_k)\in N\times \Z^k$ defined by the toric divisors
$L_i$ all are either vertices or contained in faces of $\Gamma_{\Delta}$ which do not
contain $0$.  Hence, $\Gamma_{\Delta}$ is a "disjoint" (except for faces) union of the
simplexes $\Gamma_{\Delta}(\tau):=(v_1,\ldots, v_k, (v_{\rho_\theta})_{\theta\in\tau})$ where $\tau$ is any simplex defined by $0\in N_\R$ and generating
vectors of rays of $\Sigma$ (we make use of notations of Section
\ref{subsection:Fan}).
Let $\Gamma_{\Sigma}$ be the convex hull of the points
$\{0,w_\theta,\theta\in\Sigma(1)\}$ in $N_\R$. We have~:
\begin{align*}
\rank (\GKZmodsheaf) &= (n+k)!\text{Vol}(\Gamma_{\Delta})=\sum_{\tau, \text{ simplex of $\Sigma$}}(n+k)!\text{Vol}(\Gamma_{\Delta}(\tau))\\
&=\sum_{\tau, \text{ simplex of $\Sigma$}}|\det(v_1,\ldots, v_k, (v_{\rho_\theta})_{\theta\in\tau})|
=\sum_{\tau, \text{ simplex of $\Sigma$}}|\det((w_{\theta})_{\theta\in\tau})|\\
&=\sum_{\tau, \text{ simplex of $\Sigma$}}n!\text{Vol}(\Gamma_{\Sigma}(\tau))=n!\text{Vol}(\Gamma_{\Sigma})=\dim H^{2*}(X).
\end{align*}
\end{proof}

\subsection{Local freeness and rank of the residual GKZ sheaf associated to $(X,\vb)$}

\begin{prop}\label{prop:Res,locally,free} 
Under assumptions of Theorem \ref{thm:GKZ_locally,free}. 
  \begin{enumerate}
  \item On $z=0$,  the $\mathcal{O}_{\freetoric}$-module
    $({\GKZmodsheaf^\res}/z \GKZmodsheaf^{\res})|_{\freetoric}$ is locally free of rank
    $\dim_{\cc}\overline{ H^{2*}(X)}=\dim_{\cc}
    H^{2*}(X)-\dim_{\cc}\ker( \mc)$.
  \item  On $z\neq 0$, the $\mathcal{O}_{\freetoric\times \C^{*}}$-module
    ${\GKZmodsheaf^\res}\mid_{\freetoric \times \C^{*}}$ is locally free of rank less than  $\dim_{\cc}\overline{ H^{2*}(X)}$.
  \end{enumerate}
\end{prop}

\begin{proof}
On $z\neq 0$, ${\GKZmodsheaf^\res}|_{\freetoric\times\cc^{*}}$ is locally free by Theorem 1.4.10 of \cite{Hotta-D-module}, as 
as in Step 2 of the proof of Proposition \ref{prop:freeness_of_M}. 
By Nakayama's lemma, it is enough to prove the first statement.

Consider the residual Batyrev $\La$-algebra $\bat^{\res}=\La[x_\rho]/(\GKZcom
:x_\top)$ defined in Subsection \ref{subsec:residual_Batyrev_ring}.
By Proposition \ref{thm:Batyrev_locally_free}.2,
$\bat^{\res}$ is a locally free module of rank $\dim_{\cc}
    H^{2*}(X)-\dim_{\cc}\ker( \mc)$
over the open subscheme
$\freemori\subset \specmori$ defined in Lemma \ref{lem:freeness_neighbourhood}.
Proposition \ref{prop:Res,locally,free} follows from the Lemma below. \end{proof}

\begin{lem}
Consider  the alternative definition of $\GKZmod$ given in Remark \ref{rem:alternative_definitions_of_GKZ}.(2)~:
$\GKZmod=\D''/\GKZid''$, where $\D''=\C[q_a^\pm,z]\langle z\de_{\la_\rho}\rangle$ and $\GKZid'':= \langle \sq''_d,\gz''_u\rangle$. Put $\hatctop=\prod_{\rho\in\raysfiber} (-z\de_{\la_\rho})\in\D''$. 

Then
$
\GKZmod^\res \simeq \D''/(\GKZid'': \hatctop)
$
and
the following application is a well defined isomorphism of commutative $\C[q_a^\pm]$-algebras~:
\begin{align}\label{eq:isomorphisme_Bres_GKZres_z=0}
\GKZmod^\res/z\GKZmod^\res &{\lra} \bat^\res\otimes_\La \C[q_a^\pm]\\
z &\longmapsto 0\nonumber \\
z\de_{\la_\rho} &\longmapsto  \nonumber
\begin{cases}
x_\rho & \text{if $\rho\in\raysbase$}\\
-x_\rho & \text{if $\rho\in\raysfiber$}
\end{cases}
\end{align}
\end{lem}
\begin{proof}
The first isomorphism $\GKZmod^\res \simeq \D''/(\GKZid'': \hatctop)$ is immediate. We make use of the same notation, $\hatctop$, in $\D$ or in $\D''$.

 Consider the morphism of $\C[q_a^\pm]$-algebras~:
 \begin{align*}
 h: \C[q_a^\pm,z]\langle z\de_{\la_\rho}\rangle  &\lra \C[q_a^\pm][x_\rho]\\
 z& \longmapsto 0\\
 z\de_{\la_\rho} & \longmapsto 
 \begin{cases}
 x_\rho & \text{if $\rho\in\raysbase$}\\
-x_\rho & \text{if $\rho\in\raysfiber$},
 \end{cases}
 \end{align*}
well defined since $z$ is sent to $0$.
For any $d\in \mori{Y}$ 	and $u\in M'$ we have~: 
$$h(\square''_d)=R_d, \quad h(\gz''_u)=Z_u.$$

To prove that \ref{eq:isomorphisme_Bres_GKZres_z=0} is a well defined isomorphism, we must show that each polynomial $P\in (G:x_\top)$ in
$\C[q_a^\pm][x_\rho]$
possesses an antecedent for $h$ in $(\GKZid :\hatctop)$. 

Let us choose a section of $h$ as a morphism of $\C[q_a^\pm]$-module.
First consider the following isomorphism of $\C$-algebras~:
\begin{align*}
\widehat{.}: \C[x_\rho] & \lra \C[z\de_{\la_\rho}]\\
x_\rho & \longmapsto   \widehat{x_\rho}=
\begin{cases}
z\de_{\la_\rho} &\text{ if $\rho\in \raysbase$},\\
  -z\de_{\la_\rho} &\text{ if $\rho\in \raysfiber$},
 \end{cases}
\end{align*}
and extend it $\C[q_a^\pm]$-linearly to $\widehat{.}: \C[q_a^\pm][x_\rho]  \ra \C[q_a^\pm,z]\langle z\de_{\la_\rho}\rangle$. 
For any $P\in \C[q_a^\pm][x_\rho]$, one check that $h\bigl(\widehat P\bigr)=P$.

Let $P$ be in $(G:x_\top)$. Recall that the ideal $\QSR$ is generated by polynomials $\qsrpol_d, d\in \gp,$ where $\gp$ is the set of primitive classes. Let $(u_i, i\in I=\{1,\ldots,k\})$  be a base of the dual lattice of $N'=N\oplus \Z^k$. Put 
$Z_i:=\sum_{\rho\in \De(1)} \langle u_i,v_\rho\rangle x_\rho$.
The ideal $\Lin$ is generated by polynomials $\{Z_i, i\in I\}$. 
Then we can write~:
\begin{align}\label{eq:ecriture_de_x_top}
x_\top P&=\sum_{d \in \gp}A_d\qsrpol_d +\sum_{i\in I} B_iZ_i,\qquad  A_d, B_i\in \C[q_a^\pm][x_\rho].
\end{align}
We need to find $\widetilde P\in (\GKZid'' :\hatctop)$ such that $h\bigl(\widetilde P\bigr)=P$.
For that, we may assume that $x_\top$ does not divide any monomial of $A_d$ or $B_i$.
If not we have, for any $d\in\gp$ or $i\in I$ a unique decomposition~:
$$
A_d=A_{d,1}+x_\top A_{d,2},\quad B_i=B_{i,1}+x_\top B_{i,2}.
$$
where $x_\top$ does not divide any monomial of $A_{d,1}$ or $B_{i,1}$.
Put $P_2=\sum A_{d,2}\qsrpol_d+\sum B_{i,2} Z_i$ and $\widetilde{P_2}=\sum \widehat{A_{d,2}}\square''_d+\sum \widehat{B_{i,2}}\gz''_i$.
Since $\widetilde{P_2}$ is in $\GKZid''$, it is also in $(\GKZid'':\hatctop)$~; moreover, $h\bigl(\widetilde{P_2}\bigr)=P_2$.
If we find $\widetilde {P_1}\in (\GKZid'':\hatctop)$ such that $h\bigl(\widetilde {P_1}\bigr)=P-x_\top P_2$,
then $\widetilde {P}=\widetilde {P_1}+\widetilde {P_2}$ is an antecedent of $P$ in $(\GKZid'':\hatctop)$.

Assume now that $x_\top$ does not divide any monomial of $A_d$ or $B_i$.

Since each primitive class is in the Mori cone, and each line bundle $\lb_i$ is ample,
$x_\top=\prod_{\rho\in \raysfiber}x_\rho$ divides $x^{d^-}$ for any $d\in\gp$ and  we can write~:
$$
\qsrpol _d=x^{d^+}-Q^dx_\top x^{d^--\eps},
$$
 where $\eps=(\eps_\rho)_{\rho\in\De(1)}, \eps_\rho=1$ if $\rho\in\raysfiber$, 
$\eps_\rho=0$ if $\rho\in\raysbase$. 
In the same way, for any $i\in I$, we write
$$
Z_i=Z'_i+a_ix_\top, 
$$
where $a_i\in\C$, and $x_\top$  does not divide any term of $Z'_i$. Since $\deg x_\top=k$, $a_i=0$ if $k>1$, 
and $a_i=\langle u_i,v_{\rho_\top}\rangle$ if $k=1$.

Finally, for any $d\in\gp$, $i\in I$, we write~:
\begin{align*}
A_d& =\sum_{\al \in\Z^r} q^{\al}A_{d,\al}, \quad B_i =\sum_{\al \in\Z^r} q^{\al}B_{i,\al}, \quad \text{where }A_{d,\al}, B_{i,\al}\in \C[x_\rho]
\end{align*}
In the quotient ring $\C[q_a^\pm][x_\rho]/(x_\top)$ we obtain from (\ref{eq:ecriture_de_x_top}), for any $\al\in\Z^r$~:
\begin{align}\label{eq:annulation_somme_poly}
\sum_{d \in \gp}A_{d,\al}x^{d^+} +\sum_{i\in I} B_{i,\al}Z'_i=0.
\end{align}

For any $\al\in\Z^r$, put
\begin{align*}
\widetilde P_\al &= -\sum_{d\in\gp}\widehat{ A_{d,\al}} q^d\hspace{-1mm}
\prod_{\rho\in\raysfiber}\hspace{-1mm}[-z\de_{\la_\rho}+zd_\rho^--z]_{d_\rho^--1}\hspace{-1mm}
\prod_{\rho\in\raysbase}\hspace{-1mm}[z\de_{\la_\rho}]_{d_\rho^-}+ 
\sum_{i\in I}\widehat{ B_{i,\al}} a_i,
 \end{align*}
 where we make use of the Pochammer symbol defined in Notation \ref{notn:quantization}. Set~:
 \begin{align*}
\widetilde P &= \sum _{\al\in\Z^r}q^{\al}\widetilde P_\al.
\end{align*}
Then $h(\widetilde P_\al)=-\sum_{d\in\gp}{ A_{d,\al}} q^d x^{\gamma_d}+\sum_{i\in I}{ B_{i,\al}} a_i$, and
 (\ref{eq:annulation_somme_poly}) gives~: 
\begin{align*}
x_\top(P-h(\widetilde P)) &= \sum_{\al \in\Z^r}q^\al \left( 
 \sum_{d \in \gp}A_{d,\al}x^{d^+} +\sum_{i\in I} B_{i,\al}Z'_i 
   \right)=0 
\end{align*}
Which proves that $P=h(\widetilde P)$.

We claim that $\widetilde P\in (\GKZid'': \hatctop)$. 
By definition of the quotient ideal $(\GKZid'': \hatctop)$, it is sufficient to show that, for any $\al \in \Z^r$,
$\hatctop\widetilde P_\al\in\GKZid$. 

Recall that, for any $\rho\in \De(1)$,  $z\de_{\la_\rho}.q_a=q_a.z\de_{\la_\rho}+D_\rho^a zq_a$
(Remark \ref{rem:alternative_definitions_of_GKZ}.(2)). Then we have, for $d\in\mori{Y}$~:
\begin{align}\label{eq:produit_par_classe_top}
\hatctop q^d=\hspace{-3.5mm}\prod_{\rho\in \raysfiber}\hspace{-3.5mm}(-z\de_{\la_\rho})\prod_a q_a^{T_a.d}=
\hspace{-3.5mm}\prod_{\rho\in \raysfiber}\hspace{-3.5mm}q^d(-z\de_{\la_\rho}-z\sum_{a=1}^r(T_a.d)D_\rho^a)=
q^d\hspace{-3.5mm}\prod_{\rho\in \raysfiber}\hspace{-3.5mm}(-z\de_{\la_\rho}+zd_\rho^-)
\end{align}
since $d_\rho=d_\rho^-$ for any $d\in\mori{Y}$ and $\rho\in\raysfiber$.

Applying morphism $\widehat{\ .\ }$ to equality (\ref{eq:annulation_somme_poly}), which does not contain any variable $q_a$, we have~:
$$
\sum_{d \in \gp}\widehat{A_{d,\al}}\widehat{x^{d^+}} +\sum_{i\in I} \widehat{B_{i,\al}}\widehat{Z'_i}=0.
$$
Moreover, since $d$ is a primitive class, coefficients of $d^+=(d_\rho^+)_{\rho\in \De(1)}$ are either equal to $0$ or $1$, 
and $d_\rho^+=0$ if $\rho\in\raysfiber$. Thus
$
\widehat{x^{d^+}}=\prod_{\rho\in \De(1)}(z\de_{\la_ \rho})^{d_\rho^+}=\prod_{\rho\in \raysfiber}[-z\de_{\la_\rho}+zd_\rho^+]_{d_\rho^+}\prod_{\rho\in \raysbase}[z\de_{\la_\rho}]_{d_\rho^+},
$
which gives~:
\begin{align}
\sum_{d \in \gp}\widehat{A_{d,\al}}\prod_{\rho\in \raysfiber}[-z\de_{\la_\rho}+zd_\rho^+]_{d_\rho^+}\prod_{\rho\in \raysbase}[z\de_{\la_\rho}]_{d_\rho^+} +\sum_{i\in I} \widehat{B_{i,\al}}\widehat{Z'_i}=0.
\label{eq:annulation_somme_operateurs}
\end{align}
Finally, equalities (\ref{eq:annulation_somme_operateurs})
and  (\ref{eq:produit_par_classe_top}) gives~:
\begin{align*}
\hatctop \widetilde{P_\al} &= -\sum_{d\in\gp}\hatctop \widehat A_{d,\al} q^d
\prod_{\rho\in\raysfiber}[-z\de_{\la_\rho}+d_\rho^--1]_{d_\rho^--1}\prod_{\rho\in\raysbase}[z\de_{\la_\rho}]_{d_\rho^-}+ 
 \sum_{i\in I}\hatctop \widehat B_{i,\al} a_i\\
 	&=  \sum_{d \in \gp}\widehat{A_{d,\al}}\square''_d + \sum_{i\in I} \widehat{B_{i,\al}}\gz''_i \in \GKZid''. 
\end{align*}
\end{proof}

\section{Isomorphisms between quantum $\gd$-modules and GKZ modules}
\label{sec:isomorphism-theorems}

\subsection{The mirror Theorem of Givental and Lian-Liu-Yau}
\label{subsec:mirror,sym}

The mirror theorem was proved by Givental (\cf \cite[Theorem
0.1]{Givental-1998-Mirror-complete-intersection} and \cite[Corrolary
5]{Givental-Coates-2007-QRR}) and by Lian-Liu-Yau \cite{Lian-Liu-Yau-mirror-principle-1-1999}. Our
technics are closed to the work of Givental that  we recall now. As before, $X$
is a smooth toric projective variety endowed with $k$ globally generated line bundles $\lb_{1},
\ldots ,\lb_{k}$ such that $(\omega_{X}\otimes \lb_1\otimes\cdots\otimes\lb_k)^{\dual }$ is nef. We
put $\vb=\oplus_{i=1}^{k}\lb_{i}$.

Denote by $t_{0}$  the coordinate on $H^{0}(X)$ associated to $T_{0}=\mathbf{1}$. In the definition below, we denote by $\vb_{0,1,d}(1)$ the vector bundle on $X_{0,1,d}$ defined in  Subsection \ref{subsubsec:twisted_product}.
 \begin{defn}\label{defi:twisted,J}
   We define the cohomological multi-valued function $J^{\tw}$ by~:
   \begin{align*}
     J^{\tw}(t_{0},q,z)&:=e^{t_{0}/z}q^{T/z}\left(1+
       z^{-1}\sum_{\stackrel{d \in H_{2}(X,\zz)}{d\neq
           0}}{q^{d}}{e_{1}}_*\left(\frac{c_\top (\vb_{0,1,d}(1))}{z-\psi}
         \cap [X_{0,1,d}]^{\vir}\right)\right)
   \end{align*}
 \end{defn}
 where $q$ is in the domain of convergence of the quantum product $\convtoric\subset \spectoric$, $z$ is in $\C$
 and $q^{T/z}=\prod_{a=1}^rq_a^{T_a/z}:=e^{z^{-1}\sum_{a=1}^{r}T_{a}\log (q_{a})}$.

The proposition below is the twisted version of Lemma 10.3.3 of \cite{Cox-Katz-Mirror-Symmetry}.
\begin{prop}\label{prop:J,expression}
Let $L^{\tw}$ be the multivalued section of $\Hom(F,F)$ defined in (\ref{eq:definition_of_L}).
In $H^{2*}(X)$, we have
   \begin{align*}
     \ctopvb J^{\tw}(t_{0},q,z)&=\ctopvb (e^{-t_{0}/z}L^{\tw}(q,z))^{-1}\mathbf{1}
  \end{align*}
In the reduced cohomology ring $H^{2*}(X)/\ker \mc$ we have
  \begin{align*}
\overline{J^{\tw}}(t_{0},q,z)={(e^{-t_{0}/z}\overline{L}}(q,z))^{-1}\overline{\mathbf{1}}.    
  \end{align*}
\end{prop}

\begin{rem}
Notice that $\ctopvb J^{\tw}(t_{0},q,z)$ is exactly $J_{\mathcal{V}}$ of \cite[p.358]{Cox-Katz-Mirror-Symmetry}.
\end{rem}

\begin{proof}[Proof of Proposition \ref{prop:J,expression}]
The first equalities is obtained by 
repeating the proof of Lemma 10.3.3 in \cite{Cox-Katz-Mirror-Symmetry} where one
changes the standard Gromov-Witten axioms by the twisted axioms (see Appendix
\ref{sec:twisted-axiom-for-GW}).
This first equality implies that
$\overline{J^{\tw}(t_{0},q,z)}=e^{t_{0}/z}\overline{(L^{\tw}(q,z))^{-1}\mathbf{1}}$ which is
$(\overline{L}(q,z))^{-1}\overline{\mathbf{1}}$ by definition of $\overline{L}$ (\cf Formula
\eqref{eq:defi;Lred}).
\end{proof}

Recall notations from section \ref{subsection:Fan} and \ref{subsection:Batyrev_algebra}~: to a ray $\theta\in \Sigma(1)$, we associate a toric
divisor denoted by $D_{\theta}$. For any class $d\in H_{2}(X,\zz)$ and any $i\in\{1,\ldots,k\}$ we
put
\begin{align*}
  d_{\theta}&:=\int_{d}D_{\theta} \mbox{\quad and\quad }
  d_{{i}}:=\int_{d}\tordiv_{i}
\end{align*}
We define a  cohomological multi-valued
function by
  \begin{align}
    \label{eq:definition_of_I}
    I(q,z):=q^{T/z}\sum_{d\in H_{2}(X,\zz)}q^{d}\prod_{i=1}^{k}
\frac{\prod_{m=-\infty}^{d_i}([\tordiv_{i}]+mz)}{\prod_{m=-\infty}^{0}([\tordiv_{i}]+mz)}
\prod_{\theta\in\Sigma(1)}\frac{\prod_{m=-\infty}^{0}([D_{\theta}]+mz)}
{\prod_{m=-\infty}^{d_{\theta}}([D_{\theta}]+mz)}
  \end{align}
where $q^{T/z}:=e^{z^{-1}\sum_{a=1}^{r}T_{a}\log(q_{a})}$.

We develop the $I$-function in power series in $z^{-1}$ and a direct computation gives~:
\begin{align}\label{eq:34}
  I(q,z)&=F(q)\mathbf{1} + z^{-1}G(q) + O(z^{-2})
\end{align}
where $F$ is an invertible univariate scalar function and $G$ takes value in $H^{\leq 2}(X)$.

There exists a natural map $\al: H^2(X,\C)\ra \spectoric$ defined by~:
\begin{align*}
\al: H^2(X,\C) &  \lra  \spectoric = \Spec \C[H_2(X,\Z)]\\
 \tau & \longmapsto  q:= \left[ d \mapsto q^d=\exp \left(2i\pi \int_d\tau\right)\right],
\end{align*}
so that $\al(\sum_{a=1}^r t_aT_a)=(e^{2i\pi t_a})_{a\in\{1,\ldots, r\}}$.

\begin{defn}\label{defn:mirror_map}
The \emph{mirror map} of $(X,\vb)$ is the composite map
\begin{align}\label{eq:mirror,map}
\Mir:  \spectoric& \lra   H^0(X) \times \spectoric \\  
q &  \longmapsto  (\text{Id}\times \al)\left(\frac{G(q)}{F(q)}\right)\nonumber
\end{align}
where $\al: H^2(X,\C)\ra \spectoric$ is defined above, and  $F, G$ are the functions appearing in (\ref{eq:definition_of_I}).
One can check that the mirror map is univariate.
\end{defn}

The mirror theorem of Givental (\cf \cite[Theorem
0.1]{Givental-1998-Mirror-complete-intersection} and \cite[Corrolary
5]{Givental-Coates-2007-QRR}~; see also \cite[Theorem
11.2.16]{Cox-Katz-Mirror-Symmetry} or Lian-Liu-Yau \cite{Lian-Liu-Yau-mirror-principle-1-1999}) tells us the following.
\begin{thm}\cite[Corrolary
  7]{Givental-Coates-2007-QRR}\label{thm:mirror,symmetry,Givental}
Let $\Mir$ be the mirror map defined in \ref{defn:mirror_map}.  

  There exists an open subset 
  $$\mirrortoric=\{(q_a)_{a\in \{1,\ldots,r\}}, |q_a|<\de, \de\in \R_{>0}\}$$ 
  of $\spectoric$ such that 
  \begin{enumerate}
  \item $\Mir(\mirrortoric)$ is contained in  $H^{0}(X)\times \convtoric $ where $\convtoric\subset \spectoric$ is the 
  convergence domain of the quantum product (see Notation \ref{notn:parametre_M}),
\item $ \Mir(q)=(0,q)+O(q)$,
\item $    J^{\tw}(\Mir(q),z)={I(q,z)}/{F(q)}$.
\end{enumerate}
\end{thm}

\subsection{Quantum $\mathcal{D}$-module of a toric complete intersection in terms of residual GKZ system. }
\label{subsec:main,thm}

In order to relate the GKZ modules defined in section \ref{sec:GKZ_modules} and quantum $\mathcal{D}$-modules defined in section \ref{sec:quantum-D-modules} we make use of the mirror map. As the target of this map is not $\convtoric$ but  
$H^0(X)\times \convtoric$ (Theorem \ref{thm:mirror,symmetry,Givental}), we first need to extend the base space of the various quantum $\mathcal{D}$-modules defined over $\convtoric$.
We will keep the same notations for these extended $\mathcal{D}$-modules~:
\begin{itemize}
\item  The twisted quantum $\mathcal{D}$-module $\QDM(X,\vb)$ is the trivial bundle $F^{\tw}$ with fibre $H^{2*}(X)$ over $H^0(X)\times \convtoric\times \C_z$ endowed with the connection~:
\begin{displaymath}
 \nabla_{\delta_{z}}=\delta_{z} -\frac{1}{z}\mathfrak{E}\Zprod{q}+\mu , \quad   \nabla_{\partial_{t_{0}}}=\partial_{t_{0}}+\frac{1}{z} \mathbf{1}\twprod{q}\quad
      \text{\quad and\quad }
\forall a
  \in \{1, \ldots ,r\}, \nabla_{\delta_{a}}=\delta_{a}+\frac{1}{z} T_{a}\twprod{q},\
\end{displaymath}
where $\mathfrak{E}=c_{1}(\tang_{X})-c_{1}(\vb)+t_{0}\mathbf{1}$ and
$\mu$ is the unchanged endomorphism of $H^{2*}(X)$ defined in \ref{eq:definition_de_nabla_twist}.
The fundamental solution $L^{\tw}$ is also extended in~:
\begin{align*}
  L^{\tw}(t_{0}, q,z)&:=e^{-t_{0}/z}L^{\tw}(q,z) 
\end{align*}
\item  The reduced quantum $\mathcal{D}$-module $\ov{\QDM}(X,\vb)$ is extended over
$H^0(X)\times \convtoric\times \C_z$ by taking the quotients of $\QDM(X,\vb)$. We do the same for the fundamental solution, which 
gives  $\overline{L}(t_{0},q,z):=e^{-t_{0}/z}\overline{L}(q,z) $.
\item The  ambient quantum $\mathcal{D}$-module $\ov{\QDM}_{\amb}(Z,\vb)$ is the trivial bundle $F^{Z}_{\amb}$  with fiber $H^{2*}_{\amb}(X)$ over 
$H^0(X)\times \convtoric\times \C_z$ endowed with the connection~:
\begin{displaymath}
\nabla^{Z}_{\delta_{z}}=\delta_{z} -\frac{1}{z}\mathfrak{E}^{Z}\Zprod{q}+\mu^{Z},\quad
  \nabla^{Z}_{\partial_{t_{0}}}=\partial_{t_{0}}+\frac{1}{z} \mathbf{1}\Zprod{q}
           \text{\quad and\quad }\forall a
  \in \{1, \ldots ,r\}, \nabla^{Z}_{\delta_{a}}=\delta_{a}+\frac{1}{z} T_{a}\Zprod{q},
\end{displaymath}
where $\mathfrak{E}^{Z}:=c_{1}(\tang_{Z})+t_{0}\mathbf{1}$ and $\mu^{Z}(\psi_{a})=\psi_{a}(\deg(\psi_{a})-\dim_{\cc}Z)/2$.
\end{itemize}

We have~:

\begin{thm}\label{thm:main_theorem} Let $X$ be a projective smooth toric variety endowed with $k$ line bundles
  $\lb_{1}, \ldots ,\lb_{k}$~; put $\vb:=\oplus_{i=1}^{k}\gl_{i}$. Assume that each $\lb_i$ is  globally generated and that $(\omega_{X}\otimes \lb_{1}\otimes \ldots  \otimes\lb_{k})^{\dual }$ is nef. 

Consider the mirror map $\Mir$ and the open subset $\mirrortoric$ of $\spectoric$ defined in Theorem \ref{thm:mirror,symmetry,Givental}.
 For $\varepsilon  \in \R_{>0}$, put
  \begin{displaymath}
\mirrortoric_{\varepsilon}:=\{(q_{1}, \ldots ,q_{r}) \in \mirrortoric \mid 0< |q_{a}| <\varepsilon \}.
  \end{displaymath}
There exists $\varepsilon$ in $\R_{>0}$ such that
\begin{enumerate}
\item Let $\GKZmodsheaf$ be the GKZ sheaf and $\QDM(X,\vb)$ the twisted quantum
  $\mathcal{D}$-module.
Over ${\mirrortoric_{\varepsilon} \times\cc}$, the morphism
\begin{align}\label{eq:twisted_iso_GKZ-Quantum}
{\GKZmodsheaf} &\stackrel{\sim}{\longrightarrow} (\Mir\times\Id)^*\QDM(X,\vb) \\
1 &\longmapsto L^{\tw}(\Mir(q),z)I(q,z)\nonumber
\end{align}
is a well-defined isomorphism of $\mathcal{D}$-modules.

\item Let $\lb_{1}, \ldots ,\lb_{k}$ be $k$ ample line bundles. Over ${\mirrortoric_{\varepsilon} \times\cc}$,   the  morphism
\begin{align}\label{eq:residual_iso_GKZ-Quantum}
         {\GKZmodsheaf^\res}|_{\mirrortoric_{\varepsilon} \times\cc} &\stackrel{\sim}{\longrightarrow} (\Mir\times\Id)^{*}\overline{\QDM}(X,\vb)\simeq (\Mir\times\Id)^{*}{\QDM}_{\amb}(Z)\\
         1 &\longmapsto \ov L(\Mir(q),z)\ov{I(q,z)}\nonumber
\end{align}
is a well defined isomorphism of $\gd$-modules.
\end{enumerate}
 \end{thm}

\begin{rem} 
\begin{enumerate}
\item The first point of Theorem \ref{thm:main_theorem} should be known by specialists. However, we did not find a precise reference in our settings.

\item The second point constitutes our main result.
It answers to the               question addressed in the \cite[p.94-95 and
p.101]{Cox-Katz-Mirror-Symmetry}: ``What differential equations shall we add to $\GKZidsheaf$ to get
an isomorphism with $ \QDM_{\amb}(Z)$ ?''.

This result should permit us to compute algorithmically a finite system of differential equation
defining $\QDM_{\amb}(Z)$.
We present it in the Remark \ref{rem:algo}.
\end{enumerate}
\end{rem}

  \begin{lem}\label{lem:well-defined,twisted_morphism} Under the assumption of Theorem \ref{thm:main_theorem}.1,
  the morphism of $\gd$-modules
    \begin{align*}
 {\GKZmodsheaf} &\stackrel{\sim}{\longrightarrow} (\Mir\times\Id)^*\QDM(X,\vb) \\
1 &\longmapsto L^{\tw}(\Mir(q),z)I(q,z)\nonumber
    \end{align*}
    is well defined over $\mirrortoric$.
    \end{lem}
    \begin{proof}[Proof of Lemma \ref{lem:well-defined,twisted_morphism}]
A direct computation shows that 
\begin{displaymath}
  L^{\tw}(\Mir(q),z)I(q,z)=L^{\tw}(\Mir(q),z)J^{\tw}(\Mir(q),z)/F(q)
\end{displaymath}
 is univariate.

    We make use of notations of Definitions \ref{def:GKZ} and \ref{def:idealGKZ}~; we have
    $\GKZmodsheaf$ the sheaf associated to $\mathbb{D}/\mathbb{G}$, where 
    \begin{displaymath}
\mathbb{D}:=\mathbb{C}[q_{a}^{\pm},z]\langle z\delta_{q},z\delta_{z}\rangle, \quad \mathbb{G}:=\<\eulerfield,\sq_d, d\in H_2(X,\Z)\>.
    \end{displaymath}
   It is sufficient to prove that,  for any $ d\in H_2(X,\Z)$~:
  \begin{align*}
  \square_{d}\left(z^{-c_1(\tang_{X})-c_{1}(\vb)}z^{\mu}I(q,z)\right)&=0,\\
   \text {and\quad } \eulerfield\left(z^{-c_1(\tang_{X}) -c_{1}(\vb)}z^{\mu}I(q,z)\right)&=0.
  \end{align*}
Put   
\begin{displaymath}
  A_{d}(z):=\prod_{i=1}^{k}
 \frac{\prod_{m=-\infty}^{d_i}([\tordiv_{i}]+mz)}{\prod_{m=-\infty}^{0}([\tordiv_{i}]+mz)}
 \prod_{\theta\in\Sigma(1)}\frac{\prod_{m=-\infty}^{0}([D_{\theta}]+mz)} {\prod_{m=-\infty}^{d_{\theta}}([D_{\theta}]+mz)}.
\end{displaymath}
For any $\alpha\in  H^2(X)$,  we have $[\mu,\alpha]=\alpha$. This implies that
\begin{align}\label{eq:7}
z^{\mu}\frac{\alpha}{z}&=\alpha z^{\mu}.  
\end{align}
From this we deduce that $z^{\mu}A_{d}(z)=z^{-d_{\tang_X}+d_{\vb}}A_{d}(1)$ where we set, as usual~: $d_{\tang_X}=\int_{d}c_{1}(\tang_X)$ 
and $d_{\vb}=\int_{d}c_{1}(\vb)$.
Using the definition \eqref{eq:definition_of_I} of the function $I$ we find~:
\begin{align}\label{eq:10}
  z^{-c_1(\tang_X)+c_{1}(\vb)}z^{\mu}I(q,z)=\sum_{d\in H_{2}(X,\zz)}q^{T+d}z^{-c_1(\tang_X)+c_{1}(\vb)-d_{\tang_X}+d_{\vb}}A_{d}(1).
\end{align}
For any class $\alpha\in  H^2(X)$, a direct computation shows that
\begin{align}
  \label{eq:19}
  \widehat{\alpha}q^{T+d}&=q^{T+d}z(\alpha+d_{\alpha}),\\
z\delta_{z}
(z^{-c_1(\tang_X)+c_{1}(\vb)-d_{\tang_X}+d_{\vb}})&=z(-c_1(\tang_X)+c_{1}(\vb)-d_{\tang_X} +d_{\vb})z^{-c_1(\tang_X)+c_{1}(\vb)-d_{\tang_X}+d_{\vb}}.\label{eq:14}
\end{align}
We deduce that 
\begin{align*}
 z\delta_{z}
  \left(q^{T+d}z^{-c_1(\tang_X)+c_{1}(\vb)-d_{\tang_X}+d_{\vb}}\right)&= \left(- \widehat{c_{1}(\tang_X)}+c_{1}(\widehat{\vb})\right)(q^{T+d}z^{-c_1(\tang_X)+c_{1}(\vb)-d_{\tang_X}+d_{\vb}}).
\end{align*}
which gives~: $\eulerfield\left(z^{-c_1(\tang_{X}) -c_{1}(\vb)}z^{\mu}I(q,z)\right)=0$.

Using Formula \eqref{eq:19}, the equality
$\square_{d}(z^{-c_1(\tang_X)+c_{1}(\vb)}z^{\mu}I(q,z))=0$ 
reduces to the following relation~:
\begin{align}\label{eq:33}
  A_{d-d'}(1)\prod_{i=1}^{k}\prod_{\nu=1}^{d_i^{+}}([\tordiv_{i}]+(d-d')_{\tordiv_{i}}+\nu)\prod_{\theta\in\Sigma(1)}\prod_{\nu=0}^{d_{\theta}^{+}-1}([D_{\theta}]+(d-d')_{\theta}-\nu)\\
  =A_{d}(1)\prod_{i=1}^{k}\prod_{\nu=1}^{d_i^{-}}([\tordiv_{i}]+d_i+\nu)\prod_{\theta\in\Sigma(1)}\prod_{\nu=0}^{d_{\theta}^{-}-1}([D_{\theta}]+d_{\theta}-\nu). \nonumber
\end{align}
This formula can be proved for any  $d,d'\in H_{2}(X,\zz)$ by  direct computation.
\end{proof}

 \begin{lem}\label{lem:well-defined,residual_morphism} Under the assumption of Theorem \ref{thm:main_theorem}.1,
  the morphism of $\gd$-modules
    \begin{align*}
 {\GKZmodsheaf^\res} &\stackrel{\sim}{\longrightarrow} (\Mir\times\Id)^*\ov{\QDM}(X,\vb) \\
1 &\longmapsto L^{\tw}(\Mir(q),z)I(q,z)\nonumber
    \end{align*}
    is well defined over $\mirrortoric$.
    \end{lem}
 
\begin{proof}[Proof of Lemma \ref{lem:well-defined,residual_morphism}] Let
 $R(q,z,z\delta_{q},z\delta_{z})\in \D$ be in the quotient
  ideal $(\G:\hatctop)$. We have to show that the cohomological valued function
  $R(q,z,z\delta_{q},z\delta_{z})z^{-c_1(\tang_X)+c_{1}(\vb)}z^{\mu}I(q,z)$ belongs to $\ker \mc$ where
  $\mc$ is the endomorphism of $H^{2*}(X)$~: $\alpha\mapsto \ctopvb\cup\alpha$.

It is enough to prove it when $R$ is a generator of the
  ideal $(\G:\hatctop)$ \ie $\hatctop R \in \G$.    From Formulas \eqref{eq:19} and (\ref{eq:14}), we
  deduce that
 \begin{align}\label{eq:11}
   &R(q,z,z\delta_{q},z\delta_{z})q^{T+d}z^{-c_1(\tang_X)+c_{1}(\vb)-d_{\tang_X}+d_{\vb}}\\&=
   R\left(q,z,z(T+d),z(-c_1(\tang_X)+c_{1}(\vb)-d_{\tang_X}+d_{\vb})\right)q^{T+d}z^{-c_1(\tang_X)+c_{1}(\vb)-d_{\tang_X}+d_{\vb}}.\nonumber
 \end{align}
 We decompose
 \begin{displaymath}
 R(q,z,z\delta_{q},z\delta_{z})=\sum_{\stackrel{d'\in
   H_{2}(X,\zz)}{\mbox{\tiny{finite}}}}q^{d'}R_{d'}(z,z\delta_{q},z\delta_{z}).
 \end{displaymath}
 From Equalities 
 \eqref{eq:10} and \eqref{eq:11}, we deduce that
\begin{align*}
  R(q,z,z\delta_{q},z\delta_{z})z^{-c_1(\tang_X)+c_{1}(\vb)}z^{\mu}I(q,z)&=\sum_{d\in H_{2}(X,\zz)}q^{d+T}z^{-c_1(\tang_X)+c_{1}(\vb)-d_{\tang_X}+d_{\vb}}B_{d}(z)
\end{align*}
where
\begin{displaymath}
  B_{d}(z):=\sum_{\stackrel{d'\in H_{2}(X,\zz)}{{\mbox{\tiny{finite}}}}}R_{d'}\left(z,z(T+d),z(-c_1(\tang_X)+c_{1}(\vb)-d_{\tang_X}+d_{\vb})\right)A_{d-d'}(1).
\end{displaymath}
To prove the lemma, it is enough to show that $\ctopvb B_{d}(z)=0$ for all
$d\in H_{2}(X,\zz)$. By $\hatctop R \in \G$ and Lemma
\ref{lem:well-defined,twisted_morphism}, we have
\begin{displaymath}
\begin{array}{lcl}
\hatctop R(q,z,z\delta_{q},z\delta_{z})z^{-c_1(\tang_X)+c_{1}(\vb)}z^{\mu}I(q,z)&=&0 \\
\sum_{d\in H_{2}(X,\zz)}q^{d+T}z^{-c_1(\tang_X)+c_{1}(\vb)-d_{\tang_X}+d_{\vb}}\left(\prod_{i=1}^{k}z\left([\tordiv_{i}]+d_i\right)\right)B_{d}(z)&=&0.
\end{array}
\end{displaymath}
As $\ctopvb B_{d}:\cc \to H^{*}(X)$ is a polynomial function in
$z$, it is enough to prove that it vanishes on $\cc^{*}$. Assume $z\in
\cc^{*}$.  As $q\in(\cc^*)^{r}$, we deduce that $q^{T}$ and
$z^{-c_1(\tang_X)+c_{1}(\vb)}$ are invertible in $H^{*}(X)$. Denote by
$I_{d}:=\{i\in\{1, \ldots ,k\}\mid d_i=0\}$ and $I^{c}_{d}$ its
complementary set.  For $i\in I_{d}^{c}$, the class
$[\tordiv_{i}]+d_i$ is invertible in $H^{*}(X)$. So we deduce
that
\begin{displaymath}
  \left(\prod_{i\in I_{d}}[\tordiv_{i}]\right)B_{d}(z)=0.
\end{displaymath}
This implies that $\ctopvb B_{d}(z)=0$ as
$\ctopvb=\prod_{i=1}^{k}[\tordiv_{i}]$.

\end{proof}
%
%

\begin{proof}[Proof of Theorem \ref{thm:main_theorem}]
Let us first prove that $\varphi$ is an isomorphism.
 By Theorem \ref{thm:GKZ_locally,free}, $\rank
  {\GKZmodsheaf}=\rank F$, so it is enough to
  prove that the morphism $\varphi$ is surjective 
  in a neighbourhood of
$\0$.
From \cite[Proof of
   Proposition 5.5.4 p.100]{Cox-Katz-Mirror-Symmetry} we deduce that
   the "$d$" term in the definition of the $I$ function (see
   \eqref{eq:definition_of_I}) vanishes when $d\notin \mori{X}$, so that we have:
 \begin{align}\label{eq:I,function,Mori}
     I(q,z)=q^{T/z}\sum_{d\in \mori{X}}q^{d}A_{d}(z).
   \end{align}
   Then from \eqref{eq:19} we have, for any $\alpha\in H^2(X)$~:
  \begin{displaymath}
    \widehat{\alpha}I(q,z)=q^{T/z}(\alpha+O(q)).
  \end{displaymath}
  As $ H^{2*}(X)$ is generated by $ H^2(X)$, we deduce that for any
  $a\in\{0,\ldots,s-1\}$, there exists an operator $P_{a}(q,z,z\delta_{q})$
  such that
\begin{displaymath}
  P_{a}(q,z,z\delta_{q})I(q,z)F(q)^{-1}=q^{T/z}(T_{a}+O(q))
\end{displaymath}
where $F(q)$ is defined in (\ref{eq:34})~;   notice that we do not need $z\delta_{z}$ in the operator $P_{a}$.
From the definition of the function $L^{\tw}(t_{0},q,z)$ (\cf Equality
\eqref{eq:definition_of_L}), we deduce that
\begin{displaymath}
L^{\tw}(t_{0},q,z)\gamma= e^{-t_{0}/z}q^{-T/z}(\gamma+O(q)).  
\end{displaymath}
By the mirror Theorem \ref{thm:mirror,symmetry,Givental} we have that 
\begin{displaymath}
  \Mir(q)=q + O(q).
\end{displaymath}
Putting the last three arguments together, for any $a\in\{0, \ldots ,s-1\}$ we have 
\begin{displaymath}
  \varphi(P_{a}(q,z,z\delta_{q}))=L^{\tw}(\Mir(q),z)q^{T/z}(T_{a}+O(q))=T_{a}+o(1).
\end{displaymath}
This proves the surjectivity of $\varphi$ near the point $\0$. As it is an open condition, it is true in a neighbourhood of $q=0$.

Let us prove that $\varphi'$ is an isomorphism.  First, the surjectivity
of $\varphi$ implies the surjectivity of $\pi\circ \varphi$. We deduce
that $\varphi'$ is also surjective.  On $z\neq 0$, Proposition
\ref{prop:Res,locally,free} implies that the rank of
$\GKZmodsheaf^{\res}$ is less than $\rank \overline{F}$. Hence the
surjectivity implies that its rank is $\rank \overline{F}$. This also implies that $\GKZmodsheaf^{\res}$ is locally free on $\freetoric\times \C$
of rank $\dim \overline{H^{2*}(X)}_{\C}=\rank \overline{F}$.
We deduce that $\varphi'$ is an isomorphism.
\end{proof}

\begin{rem}\label{rem:missing_point-for-freeness_of_GKZres}
The last point of this proof is the missing argument to finish the proof of Theorem \ref{thm:GKZ_locally,free}.2.
\end{rem}







\section{Examples:  hypersurface  in $\pp^{n}$ and in $\Bl_{\pt}\pp^{n}$}

\label{sec:exampl-:-hypers}

In the following examples, we want to give explicit computations
of the quotient ideal
$(\GKZid:\hatctop)$. The first example is $\pp^{n}$
with the line bundle $\go(a)$ and the second one is the blow up of $\pp^{n}$ at one point with an
appropriate bundle (see below).
In a forthcoming paper, we will prove the following general statement 
\begin{thm}\label{thm:grobner}
Let $X$ be a smooth projective toric variety with $\lb_{1}, \ldots ,\lb_{k}$ nef
line bundles on X such that $\omega_{X}\otimes \lb_{1}^{\vee}\otimes\cdots \otimes \lb_{k}^{\vee}$
is nef.
 Put $\D':=\cc[q_a^\pm,z]\langle z\delta_{q}\rangle$ and ${\GKZid}'$ the left ideal generated by $
  \square_d$ for $ d\in H_2(X,\Z)$ (see Remark \ref{rem:alternative_definitions_of_GKZ}).
  Let $P \in \GKZid'$, we can write
  \begin{displaymath}
       P=\sum_{c\in \mathcal{P}}B_{c}\sq_{c}, \quad \deg(B_{c}\sq_{c})\leq \deg(P).
  \end{displaymath}
where the degree means the degree as differential operators in $\D'$ and $\mathcal{P}$ is the set of
primitive classes (see Notation \ref{notn:primitive_classes} and Definition \ref{defi:primitive,collection} ).
\end{thm}

\begin{rem}\label{rem:algo}
Let us explain how one could use this theorem to get an algorithm to compute the residual ideal
$\langle \GKZid' : \hatctop \rangle$ in order to get, via the isomorphism of Theorem \ref{thm:main_theorem}, a
presentation of $\QDM_{\amb}(Z)$.

\begin{enumerate}
\item First, Theorem~\ref{thm:grobner}  implies that the generators of the ideal $\GKZid'$ can be indexed by the 
primitive classes, \ie $\GKZid'=\langle \sq_{c}, \quad c\in\mathcal{P}\rangle$. 
\item As the line bundle $\lb_{i}$ are ample, for any $d \in \mori{X}$, we see that the operator $\sq_{d}$
is of the form $P_1 -\hatctop q^{d} P_{2}$ where $P_{1},P_{2}$ are two operators in $\mathbb{D}'$
(see \eqref{eq:Rd,xctop} for a similar statement in the commutative case). 
Let $c_{1},c_{2}$ be two primitive classes. Using the same ideas that $S$-polynomials for Groebner
basis (in the commutative case), we can find three operators $T,U,V \in \mathbb{D'}$ such that
\begin{displaymath}
U\sq_{c_{1}}-V\sq_{c_{2}}=\hatctop T_{c_{1},c_{2}}
\end{displaymath}
This means that for each pair of primitive class $c_{1},c_{2}$, we get an operator $T_{c_{1},c_{2}}$
in the residual ideal  $( \GKZid' : \hatctop )$.
\item We think that  the residual ideal is generated by the $\sq_{c}$ for $c\in \mathcal{P}$ and by
  $T_{c_{1},c_{2}}$ for $c_{1},c_{2}\in \mathcal{P}$. At this point, we do not have a complete proof of this
  statement. We hope that an induction, like in Proposition \ref{prop:quot,ideal,blowup}, could work.
\end{enumerate}
\end{rem}

\begin{proof}[Ideas of proof of Theorem \ref{thm:grobner}]
  We only give some ideas for a proof because it is quite long and technical.\\
  \textbf{Calabi-Yau case \ie
    $\omega_{X}\otimes \lb_{1}^{\vee}\otimes\cdots \otimes \lb_{k}^{\vee}=\mathcal{O}_{X}$:}The
  theorem follows immediately from the homogeneity of the operator $\square_{d}$ for any
  $d \in H_{2}(X,\mathbb{Z})$.
\\
\textbf{ Non Calabi-Yau case:} This case is more difficult.
Let's recall some notations of \eqref{eq:other,presentation,GKZ,mod}.
We use the following isomorphism \begin{align*}
f~:~\D''/\langle \sq''_d, \gz''_u\rangle &\lra \D'/\GKZid'\simeq \GKZmod\\ 
z\de_ {\la_\rho}&\longmapsto  \sum_{a=1}^r D_\rho^a  z\de_{q_a} \nonumber
\end{align*}
where $\D'':=\cc[q_a^\pm,z]\langle z\lambda{q}_{\rho},\rho\in \Delta(1)\rangle$ and 
\begin{align*}
\sq''_d&:=\hspace{-0.1cm}\prod_{\rho\in\raysfiber}\hspace{-0.1cm}[-z\de_{\la_\rho}+zd_\rho^+]_{d_\rho^+}\hspace{-0.1cm}\prod_{\rho\in\raysbase}\hspace{-0.1cm}[z\de_{\la_\rho}]_{d_\rho^+}
 -q^d\hspace{-0.1cm}\prod_{\rho\in\raysfiber}\hspace{-0.1cm}[-z\de_{\la_\rho}+zd_\rho^-]_{d_\rho^-}\hspace{-0.1cm}\prod_{\rho\in\raysbase}\hspace{-0.1cm}[z\de_{\la_\rho}]_{d_\rho^-},\\
 \gz''_u&:=\sum_\rho \langle u,v_\rho\rangle z\de_{\la_\rho}, u\in M'.
\end{align*}
Using a suitable monomial order, we can prove a first result.
Let  $P \in \langle \sq''_{d} \rangle \subset \D''$, we can write
  \begin{displaymath}
       P=\sum_{c\in \mathcal{P}}B_{c}\sq''_{c}, \quad \deg(B_{c}\sq''_{c})\leq \deg(P).
  \end{displaymath}
Then, we have to incorporate the $\gz''_u$ operators into the picture. This is the tricky part.   We
consider the ideals generated by the symbols which is a monomial ideals in a commutative ring. We use the Taylor's complex (see
\cite{Lyubeznik-Taylor-resolution-monomial-ideals-1988}) which plays the role of the Koszul
resolution for monomial ideals.
Then we pass to the ideal $\langle \sq''_{d} , \gz_{u}\rangle$ and use the isomorphism $f$ to conclude.  
\end{proof}

\subsection{The projective space $X=\pp^n$ and the invertible sheaf $\mathcal{L}=\go(a)$.}
\label{sec:proj-space-x=ppn}

We have $H^2(\pp^n,\Z)\cong\Z$ and $H_2(\pp^n,\Z)\cong\Z$.  Denote by
$h$ the homology class of a line in $\pp^n$, and by $H$ the Chern
class of $\go(1)$. They both generates their respective group, and we
have $\int_h H=1$.  The nef cone in $H^2(X,\Z)$ is $\N.H$ and its
dual, the Mori cone in $H_2(X,\Z)$, is equal to $\N.h$.  The 
ring  $\La$ is $\C[Q^d, d\in\mori{X}]\simeq \C[q] $ where we set
$q:=Q^h$. The ring $\Pi$ is $\Pi=\C[Q^d, d\in H_2(X,\Z)]\simeq
\C[q^{\pm}]:= \C[q,q^{-1}].$ We put $\lb=\go(a)$ for $a\in\Z$. The
sheaf $\go(a)$ is ample if and only if $a>0$.  The sheaf
$\omega_{\pp^n}^\vee\otimes \go (a)^\vee=\go (n+1-a)$ is nef if and
only if $n+1-a\geq 0$. We have $0 < a \leq n+1.$ The different cases
are~:
$$
\begin{array}{ll}
\text{\emph {Calabi-Yau} } & a=n+1.\\
\text{\emph {Fano} } & 1\leq a\leq n, \text{where $(\omega_X\otimes \lb)^\vee=\go(n+1-a)$ is ample}.
\end{array}
$$
We make use of the notations of Subsection
\ref{subsection:Fan}. Let us choose a fan for
$\pp^{n}$~: Denote by $N$ the lattice $\Z^n$ and by $(e_1,\ldots,e_n)$
its canonical basis. Put $ w_1:=e_1, \ldots w_n:=e_n, \
w_{n+1}:=-e_1-\cdots -e_n$.  These are the lattice generators of the
rays $\theta_i$, where $\theta_i=\R^+ w_i$ for any $i\in \{1,\ldots, n+1\}$.
We set $\Sigma(1):=\{\theta_1,\ldots,\theta_{n+1}\}$.  The set of maximal
cones is 
$$
\Sigma(n)=\{\mbox{every (necessarily convex) cone generated by $n$ vectors in $\Sigma(1)$}\}.
$$
Denote by $D_{\theta}$ the toric divisor associated to the ray $\theta \in
\Sigma(1)$. We have $[D_{\theta}]=H$ in $H^{2*}(X)$.  There is only one
primitive collection (see \S.\ref{subsection:primitive,collections})
$P=\{\theta_1,\ldots,\theta_{n+1}\}=\Sigma(1) $. The primitive class is
$\gp=\{h\}$.

Let us compute the quotient ideal.
We will use the alternative definition \ref{rem:alternative_definitions_of_GKZ}.1 of the
GKZ module, that is $\GKZmod=\D'/\GKZid'$, with $\D'=\C[q^\pm]\langle z\de_q\rangle$ and $\GKZid'=\langle \sq_h\rangle$.
 We have $c_\top =
c_1(\lb)=c_1(\go(a))=aH$ and $\hatctop=az\de_q.$

\begin{prop}
We have:
$$
(\GKZid':\hatctop)=\<\sq_h,\frac 1 a
(z\de_{q})^{n}-q(az\de_{q}+z).\ldots.(az\de_{q}+(a-1)z)\>.
 $$
  
\end{prop}
\begin{proof}
  The operator $ P_0=\frac 1 a
  (z\de_{q})^{n}-q(az\de_{q}+2z).\ldots.(az\de_{q}+az)$ is in
  $(\GKZid':\hatctop)$~: since $z\de_q.q=q(z\de_q+z)$, we have
  $\hatctop.P = a\sq_{h}$.
We prove now, by induction on the degree, that any operator $P$ in
$(\GKZid':\hatctop)$ is in $\<P_0\>$. First notice that
$a.y\sigma(P_0)=\sigma(\sq_h)$, even in the Calabi-Yau case where $\sigma$ is the symbol.
Let $P$ be in $(\GKZid':\hatctop)$.

If $\deg P=0$ (and $P\neq 0$). We have, $az\de_q.P=Q.\sq_h$, where $Q\in \D$ 
and $\deg (a z\de_q.P)=1$, $\deg Q.\sq_h=\deg Q+\deg \sq_h=\deg Q+(n+1)$ (recall that $a\leq n+1$).
It follows that $n=0$, which is impossible.

Assume it is true for $\deg P=l$. If $\deg P=l+1$, we still have $a
z\de_q.P=Q.\sq_h$.  Passing to the symbol we get~:
$ay\sigma(P)=\sigma(Q).\sigma(\sq_h)=ay \sigma(Q).\sigma(P_0)$.  It
follows that the polynomials $P$ and $QP_0$ both are in
$\Quot(\hatctop, \GKZid')$ and have the same symbol. Hence,
$P-QP_0$ is in $\Quot(\hatctop, \GKZid')$ and has degree strictly less
than $l$.  By induction, $P-QP_0\in\< P_0\>$ and
$P \in\<P_0\>$.
\end{proof}

\subsection{The blown-up plane $X=\Bl_{\pt}\pp^n$ and the sheaf
  $\mathcal{L}=\go(aH+bE)$.}\label{sec:blown-up-plane-}
Denote by $N=\mathbb{Z}^{n}$ the lattice and by $(e_{1}, \ldots ,e_{n})$ the canonical basis of $N$.
The fan $\Sigma$ of $X$ is given by the rays
\begin{displaymath}
v_{0}=-e_{n},\quad  \forall i \in \{1, \ldots ,n\},\ v_{i}=e_{i},\  v_{n+1}=(-1, \ldots ,-1). 
\end{displaymath}
The maximal cone in $N\otimes \mathbb{R}$ are
\begin{align*}
  \forall i\in\{1, \ldots ,n+1\}\setminus\{n\},\  \sigma_{i}=\sum^{n+1}_{\stackrel{j=1}{j\neq
      i}}\mathbb{R}^{+}v_{j}, \mbox{ and }  
\sigma_{n,i}=\mathbb{R}^{+}v_{0}+\sum_{\stackrel{j=1}{j\neq i}}^{n-1}\mathbb{R}^{+}v_{j}
\end{align*}
We have $H^2(X,\Z)\cong\Z^2$ and $H_2(X,\Z)\cong\Z^2$.  Let $E$ be the exceptional divisor, and $H$
the strict transform by the blown-up of an hyperplane of $\pp^n$ which does not meet the blown-up
point.  We also denote by $E$ and $H$ their Chern classes. Denote by $e$ the homology class of $E$
and $h$ the homology class of $H$.  We choose the following bases which are dual to each others~:
\begin{itemize}
	\item Base of $H^2(X,\Z)$: $(T_1=H-E,T_2=H)$.
	\item Base of $H_2(X,\Z)$: $(B_1=e,B_2=h-e)$.
\end{itemize}
Notice that $c_1(\omega_X)=(n+1)H-(n-1)E$.
We denote by $D_\theta$ the toric divisor
associated to $\theta \in \Sigma(1)$ and $[D_\theta]$ its class in $H^2(X,\Z)$. We have for
$i\in\{1, \ldots ,n+1\}\setminus\{n\},[D_i]=H-E,\ [D_n]=H,\ [D_0]=E.  $ There are two primitive
collections, $P_1=\{\theta_0,\theta_n\}$ and $P_2=\{\theta_i, i\notin\{ 0,n\}\}$.  The primitive
classes are $\gp = \{e,h-e\}.$
The nef cone in $H^2(X,\Z)$ is $\R^+H+\R^+(H-E)$, an its dual, the Mori cone in $H_2(X,\Z)$  is equal to $\R^+e+\R^+(h-e)$ (see Figure \ref{fig:nef_mori_blp2}).
\begin{figure}[!h]
  \centering
  \begin{tikzpicture}[scale=0.8]
    \fill[color=gray!20] (0,0) -- (3,0) -- (3,-3) -- cycle;
    \foreach \k in {-2,-2,-1,0,1,2,3}{\draw [dotted, very thin](\k,-3)
      -- (\k,3);\draw [dotted, very thin](-2,\k) -- (3,\k);} 
      \draw  [dotted, very thin](-2,-3) -- (3,-3); 
    \draw [very    thick,>=latex,->] (0,0) -- (1,0) node [above,near end] {$\ \ \ T_2=H$}; 
    \draw [thin, gray] (0,0) -- (3,-3) ; 
    \draw [very    thick,>=latex,->] (0,0) -- (1,-1) node [below left,near    end] {$T_1=H-E$}; 
    \draw [thin,gray ] (0,0) -- (3,0) ; 
    \draw [>=latex,->,gray] (0,0) -- (0,1) node [above left ,near end]{$E$}; 
    \draw [black] (0,0) node {$\bullet$}; \draw (0.5,-3.7)
    node {The nef cone in $H^2(\Bl_{p}\pp^2,\Z)$.};
    \draw (5,0.4) node {\scriptsize de Rham duality}; \draw (5,0) node
    {$\longleftarrow \hspace{-0.3cm} \longrightarrow$ };
\end{tikzpicture}
\ \ 
 \begin{tikzpicture}[scale=0.8]
 \fill[color=gray!20] (0,0) -- (0,3)-- (3,3) -- (3,-3) -- cycle;
 \foreach \k in {-2,-2,-1,0,1,2,3}{\draw [dotted, very thin](\k,-3) -- (\k,3);\draw [dotted, very thin](-2,\k) -- (3,\k);}
\draw [dotted, very thin](-2,-3) -- (3,-3);
\draw [>=latex,->,gray] (0,0) -- (1,0)  node [above,near end] {$h$};
\draw [very thick,>=latex,->,black] (0,0) -- (0,1)  node [left ,near end] {$e$};
\draw [thin, black] (0,0) -- (0,3) ;
\draw [very thick,>=latex,->,black] (0,0) -- (1,-1) node [left,near end] {$h-e$};
\draw [thin,black ] (0,0) -- (3,-3) ;
\draw [black] (0,0) node  {$\bullet$};
\draw (0.5,-3.7) node {The Mori cone in $H_2(Bl_{\pt}\pp^2,\Z)$.};
\end{tikzpicture}
 \caption{Nef and Mori cone of $\Bl_p\pp^2$.}
 \label {fig:nef_mori_blp2}
\end{figure}
Following the choice of our base, we put $q_1:=Q^e$, $q_2:=Q^{h-e}$.
The ring $\La$ is $\C[q_{1},q_{2}]$.  We want $\go
(aH+bE)$ to be ample and $\omega_{Bl_p\pp^n}^\vee\otimes \go
(aH+bE)^\vee=\go ((n+1-a)H-(n-1+b)E)$ to be nef.  This leads to the following cases 
\begin{displaymath}
  \{(a,b)\in \mathbb{Z}^{2} \mid b\in \{-1, \ldots ,1-n\}, \ a+b \in \{1,2\}\}
\end{displaymath}
The Calabi-Yau case is $(a,b)=(n+1,1-n)$. 
We have
\begin{displaymath}
  c_{\topp}=-bT_{1}+(a+b)T_{2},\quad\widehat{c}_{\topp}=-b (z\delta_{q_{1}})+(a+b)(z\delta_{q_{2}})
\end{displaymath}

In the differential ring $\D':=\C[z,q_1^\pm,q_2^\pm]\<
z\de_{q_1},z\de_{q_2}\>$ we consider the GKZ ideal $\GKZid'=\<\sq_e,\sq_{h-e}\>$,
where
\begin{align}\label{eq:generator,GKZ,blowup}
  \left\{ 
\begin{array}{cll}
\sq_e &= (z\de_{q_1})^n -q_{1}(z\de_{q_2}-z\de_{q_1})\prod_{\nu=1}^{-b}(
-bz\de_{q_1}+(a+b)z\de_{q_2}+\nu z),\\
\sq_{h-e} &= (z\de_{q_2})(z\de_{q_2}-z\de_{q_1}) -q_2
\prod_{\nu=1}^{a+b}(-bz\de_{q_1}+(a+b)z\de_{q_2}+ \nu z).
\end{array}
 \right\}
\end{align}
The following proposition gives the generator of the quotient ideal.
\begin{prop}\label{prop:quot,ideal,blowup}
The quotient ideal of the GKZ ideal by $\hatctop$ is:
$$(\GKZid':\hatctop)=\<P_0,\sq_e,\sq_{h-e}\>$$
where
\begin{align*}
  P_0:=&-a(z\de_{q_1})^{n-1}+(a+b)(z\delta_{q_{1}})^{n-2}(z\de_{q_2}) + \\
&-ab
q_{1}(z\de_{q_2}-z\delta_{q_{1}})\prod_{\nu=1}^{-b-1}(-b(z\delta_{q_{1}})+(a+b)(z\delta_{q_{2}})+\nu
z)\\
&-(a+b)^{2}q_{2}(z\delta_{q_{1}})^{n-2}\prod_{\nu=1}^{a+b-1}(
-b(z\de_{q_1})+(a+b)(z\de_{q_2})+\nu z)
\end{align*}
\end{prop}

\begin{proof}[Proof of Proposition~\ref{prop:quot,ideal,blowup}]
First, we have $\widehat{c}_{\top}P_{0}\in \GKZid'$ as 
\begin{align}\label{eq:30}
  ab\sq_{e}+(a+b)^{2}(z\delta_{q_{1}})^{n-2}\sq_{h-e}=\widehat{c}_{\top}P_{0}  
\end{align}

Let us prove by induction on the degree of the operator $P\in \D'$ that
if $\widehat{c}_{\top}P \in \GKZid'$ then
$P\in\langle P_{0},\sq_{e},\sq_{h-e} \rangle$.
From Theorem \ref{thm:grobner}, we have
\begin{align}\label{eq:18}
 \widehat{c}_{\topp}P=R_{1}\sq_{e}+R_{2}\sq_{h-e}
\end{align}
where the degree of the operators $R_{1}\sq_{e}$ and $R_{2}\sq_{h-e}$
are less or equal to $\deg(P)+1$.

 Taking the symbol of \ref{eq:18} we get 
 \begin{displaymath}
   \sigma(\widehat{c}_{\topp})\sigma(P)=S_{1}\sigma(\sq_{e})+S_{2}\sigma(\sq_{h-e})
 \end{displaymath}
where $S_{i}$ are either the symbol of $R_{i}$ or $0$. Replacing $\sigma(\sq_{e})$ by  Equality (\ref{eq:30}), we get
 the following equality in $\mathbb{Q}[q_{1},q_{2},y_{1},y_{2}]$
\begin{align}\label{eq:37}
   \sigma(\widehat{c}_{\topp})\sigma(P)=\sigma(\widehat{c}_{\topp})\frac{S_{1}\sigma(P_{0})}{ab} +\sigma(\sq_{h-e})\left(-\frac{(a+b)^{2}}{ab}S_{1}y_{1}^{n-2}+S_{2}\right)
 \end{align}

From \eqref{eq:generator,GKZ,blowup}, we have
  \begin{align}
\sq_{h-e} &= (z\de_{q_2})(z\de_{q_2}-z\de_{q_1}) -\widehat{c}_{\top}q_2
\prod_{\nu=1}^{a+b-1}(-bz\de_{q_1}+(a+b)z\de_{q_2}+ \nu z).\label{eq:29}
  \end{align}
 In $\mathbb{Q}[q_{1},q_{2},y_{1},y_{2}]/\sigma(\widehat{c}_{\topp})$,  we get from
(\ref{eq:29})
\begin{displaymath}
  0=\overline{y}_{2}(\overline{y}_{2}-\overline{y}_{1})\left(-\frac{(a+b)^{2}}{ab}\overline{S}_{1}\overline{y}_{1}^{n-2}+\overline{S}_{2}\right)
\end{displaymath}
As $\overline{y}_{2}(\overline{y}_{2}-\overline{y}_{1})\neq 0$, there exists
$Q\in\mathbb{Q}[q_{1}q_{2},y_{1},y_{2}]$ such that
\begin{displaymath}
-\frac{(a+b)^{2}}{ab}S_{1}y_{1}^{n-2}+S_{2}=Q\sigma(\widehat{c}_{\topp})
\end{displaymath}
By the degree conditions on $R_{1}\sq_{e}$ and $R_{2}\sq_{h-e}$, we have that $\deg Q= \deg P -2$. 
Putting this in (\ref{eq:37}), we get
\begin{align}\label{eq:38}
   \sigma(\widehat{c}_{\topp})\sigma(P)=\sigma(\widehat{c}_{\topp})\left(\frac{S_{1}\sigma(P_{0})}{ab} +\sigma(\sq_{h-e})Q\right)
 \end{align}
As $\deg S_{1}\sigma(P_{0})=\deg \sigma(\sq_{h-e})Q=\deg P$, we have
\begin{align*}
\sigma(P)
&=\sigma\left(-\frac{R_1}{a}P_0-\frac{\widehat Q}{a}\sq_{h-e}\right)
\end{align*}
where $\widehat Q$ is any operator having symbol $Q$.
The operator
$$
P':=P-\left( -\frac{R_1}{a}P_0-\frac{\widehat Q}{a}\sq_{h-e}\right)
$$
satisfies that $\widehat c_{\top} P'\in\GKZid'$ and has degree strictly
inferior to the degree of $P$.  By induction, we deduce that
$P'\in\<P_0,\sq_{e},\sq_{h-e}\>$ that is $P$ is in $\< P_0,\sq_{e},\sq_{h-e}\>$.
\end{proof}

 \appendix
\section{Twisted Axioms for Gromov-Witten invariants}
 \label{sec:twisted-axiom-for-GW}

\renewcommand\theenumi{\thesection.\arabic{enumi}}

 In this Appendix, we will state (without proof) the twisted axioms
 for twisted Gromov-Witten invariants. For the ``untwisted'' axioms,
 we refer to two papers of Behrend and Manin
 (\cite{Behrend-Manin-stack-stable-mapGWI-1996} and
 \cite{Behrend-GW-in-alg-geo-1997}). As explained in
 \S.\ref{subsubsec:twisted_quantum_product}, the twisted axioms are the
 non-equivariant limit of the equivariant twisted axioms.  

 Recall from Notation \ref{notn:base_of_cohomology} and $T_{0}, \ldots
 ,T_{s-1}$ be a basis of $ H^{2*}(X)$. We denote by $T^{a}$ the
 Poincar\'e dual of $T_{a}$ for $ a\in\{0, \ldots ,s-1\}$.  Let $d$ be
 in $H_{2}(X,\zz)$.  Let $\gamma_{1}, \ldots ,\gamma_{\ell}$ be in
 $H^{2*}(X)$, $m_{1}, \ldots ,m_{\ell}$ be in $\nn$, for any
 $\sigma\in S_{\ell}$ and $j$ be in $\{1, \ldots ,\ell\}$.
\begin{enumerate}
  \item (Twisted  $S_{\ell}$-invariance)\label{item:twisted,symmetry} 
   \begin{align*}
 &\left\langle \widetilde{\tau_{m_{1}}(c_{1}({\vb})\cup\gamma_{1})}, \ldots
   ,\tau_{m_{\ell}}(\gamma_{\ell})\right\rangle_{0,\ell,d}\\
 &=\left\langle \tau_{m_{\sigma(1)}}(\gamma_{\sigma(1)}), \ldots
   ,\widetilde{\tau_{m_{\sigma(j)}}(c_{1}({\vb})\cup\gamma_{\sigma(j)})}, \ldots ,\tau_{m_{\sigma(\ell)}}(\gamma_{\sigma(\ell)})\right\rangle_{0,\ell,d}
 \end{align*}
\item(Twisted Fundamental class equation / string equation
  )\label{item:tw-SE}
\begin{align*}
 &  \left\langle \tau_{m_{1}}(\gamma_{1}), \ldots
     ,\widetilde{\tau_{m_{k}}(\gamma_{k})}, \ldots
     ,\tau_{m_{\ell}}(\gamma_{\ell}),\mathbf{1}\right\rangle_{0,\ell+1,d}\\&=
   \sum_{i\mid m_{i}>0}\left\langle \tau_{m_{1}}(\gamma_{1}), \ldots ,
     \tau_{m_{i}-1}(\gamma_{i}), \ldots ,\widetilde{\tau_{m_{k}}(\gamma_{k})},
     \ldots ,\tau_{m_{\ell}}(\gamma_{\ell})\right\rangle_{0,\ell,d}
 \end{align*}
\item (Consequence of the two above)\begin{align*}
 &  \left\langle \tau_{m_{1}}(\gamma_{1}), \ldots
     ,\widetilde{\tau_{m_{k}}(\gamma_{k})}, \ldots
     ,\tau_{m_{\ell}}(\gamma_{\ell}),\mathbf{1}\right\rangle_{0,\ell+1,d}\\&=
   \sum_{i\mid m_{i}>0}\left\langle \tau_{m_{1}}(\gamma_{1}), \ldots ,
     \tau_{m_{i}-1}(\gamma_{i}), \ldots ,\widetilde{\tau_{m_{k}}(\gamma_{k})},
     \ldots ,\tau_{m_{\ell}}(\gamma_{\ell})\right\rangle_{0,\ell,d}
 \end{align*}
\item (Twisted Divisor axiom)\label{item:tw-diviseur-axiom}\begin{align*}
     &\left\langle \tau_{m_{1}}(\gamma_{1}), \ldots
       ,\widetilde{\tau_{m_{k}}(\gamma_{k})}, \ldots ,\tau_{m_{\ell}}(\gamma_{\ell}),\gamma\right\rangle_{0,\ell+1,d}\\
     &=\left(\int_{d}\gamma\right)\langle\tau_{m_{1}}(\gamma_{1}), \ldots
     ,\widetilde{\tau_{m_{k}}(\gamma_{k})}, \ldots ,\tau_{m_{\ell}}(\gamma_{\ell})
     \rangle_{0,\ell,d}\\
 &+\sum_{i:m_{i}>0}\left\langle \tau_{m_{1}}(\gamma_{1}),
       \ldots ,
       \widetilde{\tau_{m_{i}-1}(\gamma\cup \gamma_{i})}, \ldots ,\tau_{m_{\ell}}(\gamma_{\ell})\right\rangle_{0,\ell,d}\\
 \end{align*}
\item (Twisted Dilaton equation)\label{item:twist-Dilaton}
\begin{align*}
  &   \langle \tau_{m_{1}}(\gamma_{1}), \ldots
     ,\widetilde{\tau_{m_{j}}(\gamma_{j})}, \ldots
     ,\tau_{m_{\ell}}(\gamma_{\ell}),\tau_{1}(\mathbf{1})\rangle_{0,\ell+1,d}\\
 &=(-2+n)
     \langle \tau_{m_{1}}(\gamma_{1}), \ldots
     ,\widetilde{\tau_{m_{j}}(\gamma_{j})}, \ldots
     ,\tau_{m_{\ell}}(\gamma_{\ell})\rangle_{0,\ell,d}
   \end{align*}
 \item (Twisted TRR \ie Topological Recursion Relation)\label{item:TRR}
\begin{align*}
   &  \lcor
       \tau_{m_{1}+1}(\gamma_{1}),\tau_{m_{2}}(\gamma_{2}),\widetilde{\tau_{m_{3}}(\gamma_{3})}\rcor_{0}=\sum_{a=0}^{s-1}\lcor \tau_{m_{2}}(\gamma_{2}),
 \widetilde{\tau_{m_{3}}(\gamma_{3})},T^{a}\rcor_{0} 
 \lcor \tau_{m_{1}}(\gamma_{1}),\widetilde{T}_{a}\rcor_{0} \\
   &  \lcor
       \widetilde{\tau_{m_{1}+1}(\gamma_{1})},\tau_{m_{2}}(\gamma_{2}),
 {\tau_{m_{3}}(\gamma_{3})}\rcor_{0}
 =\sum_{a=0}^{s-1}\lcor \tau_{m_{2}}(\gamma_{2}),
 {\tau_{m_{3}}(\gamma_{3})},\widetilde{T}^{a}\rcor_{0} 
 \lcor \widetilde{\tau_{m_{1}}(\gamma_{1})},{T_{a}}\rcor_{0}
 \end{align*} using the notation
 \begin{align}\label{eq:notation,corrolators}
    \lcor \tau_{m_{1}}(\gamma_{1}), \ldots , \tau_{m_{\ell}}(\gamma_{\ell})\rcor_{0}:=
    \sum_{\ell\geq 0}\sum_{d\in H_{2}(X,\zz)}\frac{1}{\ell!}\left\langle
      \tau_{m_{1}}(\gamma_{1}), \ldots , \tau_{m_{\ell}}(\gamma_{\ell}),\tau, \ldots
      ,\tau\right\rangle_{0,\ell+n,d}
  \end{align}
\item (Twisted WDVV equations)\label{item:tw-WDVV}
\begin{align*}
&
   \sum_{a=0}^{s-1} \lcor
   \tau_{m_{1}}(\gamma_{1}),{\tau_{m_{2}}(\gamma_{2})},\widetilde{T}_{a}\rcor_{0}
   \lcor
   \tau_{m_{3}}(\gamma_{3}),\widetilde{\tau_{m_{4}}(\gamma_{4})},{T^{a}}\rcor_{0} \\ \nonumber
   &=\sum_{a=0}^{s-1} \lcor
   \tau_{m_{1}}(\gamma_{1}),{\tau_{m_{3}}(\gamma_{3})},\widetilde{T}_{a}\rcor_{0}
   \lcor
   {\tau_{m_{2}}(\gamma_{2})},\widetilde{\tau_{m_{4}}(\gamma_{4})},{T^{a}}\rcor_{0}
 \end{align*}
   \begin{align*}
 &  \sum_{a=0}^{s-1} \lcor
   \tau_{m_{1}}(\gamma_{1}),{\tau_{m_{2}}(\gamma_{2})},{T}_{a}\rcor_{0}
   \lcor
   \tau_{m_{3}}(\gamma_{3}),\widetilde{\tau_{m_{4}}(\gamma_{4})},\widetilde{T}^{a}\rcor_{0} \\ \nonumber
   &=\sum_{a=0}^{s-1} \lcor
   \tau_{m_{1}}(\gamma_{1}),{\tau_{m_{3}}(\gamma_{3})},{T}_{a}\rcor_{0}
   \lcor
   {\tau_{m_{2}}(\gamma_{2})},\widetilde{\tau_{m_{4}}(\gamma_{4})},\widetilde{T}^{a}\rcor_{0}
 \end{align*}

\end{enumerate}

\bibliographystyle{amsalpha}
 \bibliography{biblio}

\end{document}